\documentclass[a4paper,reqno]{amsart}

\usepackage[T1]{fontenc}
\usepackage{lmodern}
\usepackage{microtype}
\usepackage{amssymb,amsmath,amsthm,mathtools}
\usepackage{enumitem}
\usepackage{xcolor}
\usepackage[colorlinks,citecolor=blue,linkcolor=blue,urlcolor=blue]{hyperref}
\usepackage{aliascnt}
\usepackage[nameinlink,capitalise,noabbrev]{cleveref}

\allowdisplaybreaks
\numberwithin{equation}{section}
\newtheorem{theorem}{Theorem}[section]
\newaliascnt{proposition}{theorem}
\newtheorem{proposition}[proposition]{Proposition}
\aliascntresetthe{proposition}
\newaliascnt{lemma}{theorem}
\newtheorem{lemma}[lemma]{Lemma}
\aliascntresetthe{lemma}
\newaliascnt{corollary}{theorem}
\newtheorem{corollary}[corollary]{Corollary}
\aliascntresetthe{corollary}
\newaliascnt{remark}{theorem}
\newtheorem{remark}[remark]{Remark}
\aliascntresetthe{remark}
\newaliascnt{example}{theorem}
\newtheorem{example}[example]{Example}
\aliascntresetthe{example}

\theoremstyle{definition}
\newaliascnt{definition}{theorem}
\newtheorem{definition}[definition]{Definition}
\aliascntresetthe{definition}
\newaliascnt{question}{theorem}
\newtheorem{question}[question]{Question}
\aliascntresetthe{question}

\DeclareMathOperator{\add}{add}
\DeclareMathOperator{\Fac}{Fac}
\DeclareMathOperator{\Sub}{Sub}

\DeclareMathOperator{\End}{End}
\DeclareMathOperator{\Hom}{Hom}
\DeclareMathOperator{\Ext}{Ext}

\DeclareMathOperator{\rad}{rad}
\DeclareMathOperator{\topm}{top}
\DeclareMathOperator{\proj}{proj}
\DeclareMathOperator{\ind}{ind}

\DeclareMathOperator{\findim}{findim}
\DeclareMathOperator{\dell}{dell}
\DeclareMathOperator{\tr}{tr}
\DeclareMathOperator{\tors}{tors}
\DeclareMathOperator{\wide}{wide}

\newcommand{\modA}{\operatorname{mod} A}
\newcommand{\modB}{\operatorname{mod} B}
\newcommand{\stau}{\mathrm{s}\tau\text{-}\mathrm{tilt}}

\newcommand{\HH}{\mathcal H}
\newcommand{\WW}{\mathcal W}
\newcommand{\CC}{\mathcal C}
\newcommand{\EE}{\mathcal E}
\newcommand{\YY}{\mathcal Y}

\newcommand{\kk}{\Bbbk}
\newcommand{\op}{\mathrm{op}}

\title[Relative delooping levels on torsion intervals]
{Relative Delooping Levels on Intervals of Torsion Classes}

\author[W. Cao]{Weiqing Cao}
\address{School of Mathematics and Statistics, Jiangsu Normal University,
Xuzhou 221116, Jiangsu, P. R. China}
\email{caoweiqing@jsnu.edu.cn}
\author[M. Xu]{Mingfei Xu}
\address{School of Mathematics and Statistics, Jiangsu Normal University,
Xuzhou 221116, Jiangsu, P. R. China}
\email{mfxu123@163.com}
\author[X. Zhang]{Xiaojin Zhang$^*$}
\address{School of Mathematics and Statistics, Jiangsu Normal University,
Xuzhou 221116, Jiangsu, P. R. China}
\email{xjzhang@jsnu.edu.cn}
\thanks{$^*$Corresponding author.}
\date{}
\makeatletter
\@namedef{subjclassname@2020}{\textup{2020} Mathematics Subject Classification}
\makeatother
\subjclass[2020]{Primary 16E10, 16G10; Secondary 18E10, 16G20}
\keywords{$\tau$-tilting module, torsion interval, exact heart,
delooping level, finitistic dimension, syzygy finiteness}

\hypersetup{
 pdftitle={Relative Delooping Levels on Intervals of Torsion Classes},
 pdfauthor={Weiqing Cao, Mingfei Xu and Xiaojin Zhang},
 pdfsubject={Relative delooping levels, torsion intervals, and finitistic dimension},
 pdfkeywords={tau-tilting module, torsion interval, exact heart, delooping level, finitistic dimension, syzygy finiteness}
}

\begin{document}

\begin{abstract}
Let $U$ be a support $\tau$-tilting module over a finite-dimensional
algebra $A$, and let $\mathcal V\subseteq\Fac U$ be any torsion class.
The exact heart $\HH=\Fac U\cap\mathcal V^\perp$ has an
Ext-progenerator $P$.  We prove that $\Hom_A(P,-)$ realizes $\HH$
as a submodule-closed exact subcategory of $\operatorname{mod}B$,
where $B=\End_A(P)$, and preserves minimal syzygies. The global
relative $k$-delooping levels are independent of $k\geq1$.
Writing $g=\operatorname{glo}\text{-}\dell\HH$, we obtain the sharp
comparison
\[
 g\leq\operatorname{glo}\text{-}\dell B\leq g+1,
 \qquad \findim B^{\op}\leq g+1.
\]
Thus the two global levels are finite simultaneously. For wide hearts the relative levels equal their classical
counterparts.  We also characterize syzygy finiteness of $B$ by a
finite relative syzygy stratum and obtain explicit bounds for both
finitistic dimensions. If the heart is Frobenius, it is exactly the
category of finitely generated Gorenstein-projective $B$-modules under
this realization, and both self-injective dimensions of $B$ are at most
one. We also give examples to show the main results.
\end{abstract}

\maketitle

\section{Introduction}

Let $A$ be a finite-dimensional algebra over a field $\kk$.  The
finitistic dimension conjecture asks whether the projective dimensions
of the finitely generated modules of finite projective dimension have a
common finite bound. G\'elinas \cite{Gelinas} introduced the delooping
level and proved
$\findim A^{\op}\leq\dell A$, where $\dell A$ is computed on simple
right $A$-modules. This relates finitistic dimension to the occurrence
of modules as stable direct summands of higher syzygies. Kershaw and Rickard \cite{KershawRickard} showed that the delooping
level can be infinite.  Guo and Igusa \cite{GuoIgusa} introduced the
derived delooping level, while Barrios, Lanzilotta and Mata
\cite{BarriosLanzilottaMata} studied global delooping levels and their
relation to other homological dimensions. Further developments concern
left--right symmetry \cite{GuoSymmetry} and derived equivalences:
Chen \cite{Chen} showed that finiteness of the ordinary delooping
level is not preserved by derived equivalences, whereas Wei, Wu and
Cao \cite{WeiWuCao} study comparisons for derived delooping with a
shift in the delooping degree.

Xu and Zhang \cite{XuZhang} defined relative delooping levels on
$\Fac U$ for a support $\tau$-tilting module $U$. Their construction
uses the Ext-progenerator $U$ and the tilting equivalence between
$\Fac U$ and a torsion-free class over $\End_A(U)$. To bound the
endomorphism algebra's delooping level, one must account for simple
modules outside the image of this equivalence. Their finite-test
invariant does so by using the radicals of the corresponding
indecomposable projective modules. Their subsequent work
\cite{XuZhangHigher} proves a comparison with one extra loop and
constructs arbitrarily large gaps between ordinary and the relative levels they defined.
Recently, Gao, Liu and Zhang \cite{GaoLiuZhang} develop derived delooping in
exact categories and apply it to $\Fac U$.  Thus the exact-category
formalism and the case of a zero lower endpoint have an established
relative theory.

In this paper we extend this construction to hearts of torsion
intervals with a functorially finite upper endpoint. The lower torsion
class may be arbitrary. To introduce the notation, first let $T$ and
$U$ be basic support
$\tau$-tilting modules with $\Fac T\subseteq\Fac U$, and put
\[
 \HH_{T,U}=\Fac U\cap T^\perp,
 \qquad T^\perp=\{X\mid\Hom_A(T,X)=0\}.
\]
Following \cite{AsaiPfeifer,EnomotoSakai}, we call $\HH_{T,U}$ the
heart of the interval. It is extension-closed, and hence carries the
exact structure inherited from $\operatorname{mod}A$. It need not be
closed under cokernels. Enomoto and Sakai
\cite[Corollary~4.19]{EnomotoSakai} proved that $U/\tr_T(U)$ is an
Ext-progenerator of this heart. Let $P$ be a basic representative of
its additive closure, set $B=\End_A(P)$, and write
$F=\Hom_A(P,-)$.  More generally, if $\mathcal V\subseteq\Fac U$
is any torsion class with torsion radical $t_{\mathcal V}$, then
$U/t_{\mathcal V}(U)$ is an Ext-progenerator of
$\Fac U\cap\mathcal V^\perp$ by
\cite[Lemma~4.17]{EnomotoSakai}. Theorem~\ref{thm:arbitrary-lower}
proves the same realization for this larger class of intervals.

The main point is to realize every such heart as a submodule-closed
subcategory of $\operatorname{mod}B$. This extends the comparison
mechanism from $\Fac U$ to intervals with a functorially finite upper
endpoint, without requiring functorial finiteness of the lower endpoint. In particular, it applies when the heart is
not ICE-closed, that is, not closed under images, cokernels, and
extensions. The restricted Yoneda construction is standard; the
submodule closure is the interval-specific ingredient.

\begin{theorem}\label{thm:intro-I}
The functor
\[
 F:\HH_{T,U}\longrightarrow\operatorname{mod}B
\]
is fully faithful and exact, its image is closed under submodules,
and it preserves minimal projective resolutions.  For each
$X\in\HH_{T,U}$ and $k\geq1$,
\[
 k\text{-}\dell_BF(X)
 \leq k\text{-}\dell_{[T,U]}X
 \leq(k+1)\text{-}\dell_BF(X).
\]
\end{theorem}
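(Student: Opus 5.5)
The proof has three parts: the structural properties of $F$, the transfer of minimal resolutions, and the delooping comparison. Throughout, $\HH=\HH_{T,U}$, with the Ext-progenerator $P$ given by \cite[Corollary~4.19]{EnomotoSakai}. I assume the relative syzygy and $k$-$\dell_{[T,U]}$ are defined inside $\HH$ by the usual recipe: relative syzygies are kernels of minimal $\add P$-covers, and delooping is measured by stable summands of $\Omega^{n+k}$ taken in $\HH$.

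\textbf{Structural properties of $F$.} For exactness, take a conflation $0\to X\to Y\to Z\to0$ in $\HH$. Since $P$ is Ext-projective in $\HH$, we have $\Ext^1_A(P,X)=0$, so $F$ sends it to a short exact sequence. For full faithfulness, each $X\in\HH$ has a right $\add P$-approximation $P_0\to X$. This map is an admissible epimorphism whose kernel lies in $\HH$, by the progenerator property. Iterating once gives a presentation $P_1\to P_0\to X\to0$ that is exact in $\operatorname{mod}A$. Applying $\Hom_A(-,Y)$ and $\Hom_B(-,FY)$ and using Yoneda on $\add P\simeq\proj B$ then gives a standard five-lemma argument.

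\textbf{Submodule closure.} This is the interval-specific step and the main obstacle. Let $N\subseteq F(X)$ with $X\in\HH$. Take a projective presentation $F(P_1)\to F(P_0)\to N\to 0$ and realize the composite $F(P_0)\to N\to F(X)$ as $F(f)$ for some $f\colon P_0\to X$. Put $M=\operatorname{Im} f$, a quotient of $P_0$ and hence in $\Fac U$. Since $M\subseteq X\in T^\perp$ and $T^\perp$ is closed under submodules, $M\in\HH$. Then $F(M)\to F(X)$ is injective because $F$ is left exact, and $F(P_0)\to F(M)$ is surjective. For the latter, the kernel $K$ of $P_0\to M$ lies in $\Fac U$ only if we choose $P_0$ suitably, so I would first replace $f$ by a minimal right $\add P$-approximation. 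If this fails, I would instead show directly that $\Ext^1_A(P,K)=0$ using $\tau$-rigidity, via $\Ext^1(P,K)\simeq D\overline{\Hom}(K,\tau P)$ and the fact that $\Hom(\Fac U,\tau U)=0$. Once $F(P_0)\to F(M)$ is onto, the image of $F(P_0)$ in $F(X)$ is $F(M)$, which gives $N\cong F(M)$.

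\textbf{Minimal resolutions and the delooping comparison.} Exactness together with $F(\add P)=\proj B$ sends a minimal relative resolution $\cdots\to P_1\to P_0\to X$ in $\HH$ to a projective resolution of $F(X)$. Minimality is preserved because $F$ is fully faithful, so radical maps go to radical maps. Hence $F(\Omega_\HH^n X)\cong\Omega_B^nF(X)$ for all $n$. By submodule closure and full faithfulness, stable summand relations transfer in both directions. Every $\Omega_B^{m}$ for $m\ge1$ of a module in $F(\HH)$ lies in $F(\HH)$, but a $B$-module witnessing $\Omega^{n+k}$-summands in the definition of $\dell_B$ may lie outside the image. The left inequality follows because any witness in $\HH$ maps to a witness over $B$. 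For the right inequality, a $B$-witness $M$ with $F(X)\mid\Omega^{n+k+1}M$ gives $F(X)\mid\Omega^{n+k}(\Omega M)$ with $\Omega M\in F(\HH)$ by submodule closure. Pulling back through $F$ produces the extra shift. The remaining bookkeeping of $n$ versus $k$ follows the definition verbatim.
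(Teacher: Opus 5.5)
\textbf{There is a genuine gap in the submodule-closure step.} Your arguments for exactness, full faithfulness, transfer of minimal resolutions, and the two delooping inequalities are sound and essentially the paper's (Propositions~\ref{prop:Yoneda} and~\ref{prop:sandwich}). In particular, once submodule closure is known, your right-hand inequality via $\Omega_B^{d+k+1}M=\Omega_B^{d+k}(\Omega_BM)$ with $\Omega_BM\cong F(Y)$ is exactly the paper's argument.

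The problem is that $M=\operatorname{Im}f\subseteq X$ is the wrong candidate. A monomorphism $N\hookrightarrow F(X)$ in $\modB$ need not come from a monomorphism in $\modA$. Take Example~\ref{ex:correction}: $A=\kk(1\to2)$, $T=0$, $U=P=P_1\oplus S_1$, $X=S_1$, and $N=\rad_BF(S_1)\cong F(P_1)$.
\begin{itemize}
\item Your presentation is $F(P_1)\xrightarrow{\sim}N$. The composite $F(P_1)\to F(S_1)$ is $F(f)$ for the canonical surjection $f\colon P_1\twoheadrightarrow S_1$.
\item Hence $M=S_1=X$ and $F(M)=F(X)\supsetneq N$, so $F(P_1)\to F(M)$ is not onto.
\item The $\tau$-rigidity repair fails. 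Here $K=\ker f=S_2\notin\Fac U$, so $\Hom_A(\Fac U,\tau U)=0$ says nothing about $K$. In fact $\Ext^1_A(P,K)\cong\Ext^1_A(S_1,S_2)\neq0$.
\item The approximation repair also fails. Replacing $f$ by an $\add P$-approximation of $M$ (here $1_{S_1}$) changes the image in $F(X)$, which is then no longer $N$.
\end{itemize}
The object realizing $N$ is $P_1$, which maps \emph{onto} $X$ rather than into it. So no argument that builds the preimage as a submodule of $X$ can succeed.

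What you are missing is the input the paper uses: the Brenner--Butler tilting theorem.
\begin{itemize}
\item The paper regards $U$ as a tilting module over $A/\operatorname{ann}_AU$. Then $G=\Hom_A(U,-)$ identifies $\Fac U$ with the torsion-free, hence submodule-closed, class $\YY=\Sub_C DU$ in $\operatorname{mod}C$, where $C=\End_A(U)$.
\item Since $T\in\Fac U$ and $G$ is fully faithful there, $G(\HH_{T,U})=\YY\cap G(T)^\perp$. This is an intersection of two submodule-closed subcategories.
\item One then descends to $B$. Because $\Ext^1_A(U,\tr_TU)=0$, torsion reduction gives a surjection $C\twoheadrightarrow\End_A(U/\tr_TU)$.
\item Restriction along this surjection identifies $\Hom_A(U/\tr_TU,X)$ with $G(X)$ for $X$ in the heart. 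Over a quotient ring, submodules are the same as $C$-submodules.
\item Morita equivalence then passes to the basic $P$.
\end{itemize}
In your own terms, the correct preimage of $N$ is $N\otimes_BP$: the cokernel in $\modA$ of the map of $\add P$-objects underlying your presentation, not $\operatorname{Im}f$. The tilting theorem, together with full faithfulness of $G$ on $\Fac U$, is what shows this object lies in $T^\perp$ and that $F$ sends it back to $N$.
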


(See Proposition~\ref{prop:Yoneda},
Theorems~\ref{thm:submodule-image} and~\ref{thm:arbitrary-lower},
and Proposition~\ref{prop:sandwich}.)

Here the relative level is defined using the projectives and syzygies
of the exact heart; see Definition~\ref{def:object-dell}. The key step
in the proof is a quotient of endomorphism algebras. Set
$R=U/\tr_T(U)$ and $C=\End_A(U)$. Torsion reduction gives a
surjection $C\twoheadrightarrow\End_A(R)$. For
$G=\Hom_A(U,-)$, one has
\[
 G(\HH_{T,U})=G(\Fac U)\cap G(T)^\perp
 \quad\text{inside }\operatorname{mod}C.
\]
Both subcategories on the right are closed under submodules.
Restriction along the surjection, followed by Morita equivalence,
gives the assertion for $B$. Every first $B$-syzygy consequently
belongs to the image of $F$. This yields the comparison with one extra
loop, extending \cite[Proposition~3.3]{XuZhangHigher} from $\Fac U$
to arbitrary interval hearts.

For an indecomposable projective $B$-module $Q_i$ with top $L_i$, choose
$S_i\in\HH_{T,U}$ with $F(S_i)\cong L_i$ when $L_i$ is in the image.
For each remaining index $j$, choose $N_j\in\HH_{T,U}$ with
$F(N_j)\cong\rad Q_j$; Theorem~\ref{thm:intro-I} guarantees its existence. We define
$k\text{-}\dell^{\mathrm{XZ}}[T,U]$ to be the maximum of
$k\text{-}\dell_{[T,U]}S_i$ and
$(k+1)\text{-}\dell_{[T,U]}N_j+1$ over these two index sets.
The superscript records the correction formula of Xu--Zhang. The
global relative level, denoted
$\operatorname{glo}\text{-}k\text{-}\dell[T,U]$,
is instead the supremum over all objects of the heart.

\begin{theorem}\label{thm:intro-II}
All global relative $k$-delooping levels, $k\geq1$, coincide.
Set $g=\operatorname{glo}\text{-}\dell[T,U]$.  Then
\[
 g\leq\operatorname{glo}\text{-}\dell B\leq g+1,
\]
and, for every $k\geq1$,
\[
 k\text{-}\dell B
 \leq k\text{-}\dell^{\mathrm{XZ}}[T,U]\leq g+1.
\]
In particular, $\findim B^{\op}\leq g+1$.
If the heart is wide, then $F$ is an exact equivalence with
$\operatorname{mod}B$. The relative objectwise, finite-test, and global
levels then equal their respective classical counterparts over $B$.
\end{theorem}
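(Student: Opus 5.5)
The plan takes Theorem~\ref{thm:intro-I} as given: $F$ is fully faithful and exact, its image is closed under submodules, and it preserves minimal projective resolutions. Recall the relevant definitions. $k\text{-}\dell_{[T,U]}X$ is the least $n$ such that $\Omega^n X$ (the relative syzygy) is a stable direct summand of $\Omega^{n+k}Y$ for some $Y$ in the heart. $k\text{-}\dell B$ is the maximum over simple $B$-modules. $\operatorname{glo}$ denotes the supremum over all objects.

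\emph{Independence of $k$.} Monotonicity in $k$ is formal. Since $\Omega^{n+k+1}Y=\Omega^{n+k}(\Omega Y)$ and $\Omega Y$ lies in the heart, we get $(k+1)\text{-}\dell_{[T,U]}X\geq k\text{-}\dell_{[T,U]}X$, and so $\operatorname{glo}\text{-}k\text{-}\dell[T,U]$ is nondecreasing in $k$. For the reverse inequality, suppose every object has $1$-level at most $g$. Then for each $X$, $\Omega^gX$ is a summand of $\Omega^{g+1}Y_1\oplus$proj. Apply the hypothesis to $Y_1$: $\Omega^{g}Y_1$ is a summand of $\Omega^{g+1}Y_2$, so $\Omega^{g+1}Y_1$ is a summand of $\Omega^{g+2}Y_2$. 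Iterating $k$ times gives that $\Omega^gX$ is a stable summand of $\Omega^{g+k}Y_k$. This uses only that syzygies preserve stable summands in the exact heart, which holds because the heart has enough projectives, namely $\add P$. So all global levels equal $g$. The same argument works verbatim over $\modB$, so $\operatorname{glo}\text{-}k\text{-}\dell B$ is also independent of $k$.

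\emph{Global comparison.} For $g\leq\operatorname{glo}\text{-}\dell B$, apply the left inequality of Theorem~\ref{thm:intro-I} in reverse form. Fullness, faithfulness and preservation of minimal syzygies imply that stable summand relations between objects of the heart coincide with those between their images in $\modB$. So a witness over $B$ for $F(X)$ pulls back to the heart, provided the witness $Y$ itself can be taken in the image. This is where submodule closure enters: $\Omega^{n+k}Y=\Omega^{n+k-1}(\Omega Y)$, and $\Omega Y$ lies in the image. Hence $k\text{-}\dell_{[T,U]}X\leq (k+1)\text{-}\dell_BF(X)$, and taking suprema together with $k$-independence gives $g\leq \operatorname{glo}\text{-}\dell B$. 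For the upper bound, take any $B$-module $M$. Then $\Omega M\cong F(X)$ for some $X$, and $k\text{-}\dell_B M\leq k\text{-}\dell_B\Omega M+1\leq k\text{-}\dell_{[T,U]}X+1\leq g+1$ by the left inequality of Theorem~\ref{thm:intro-I}.

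\emph{Finite test bound.} For simples $L_i$ in the image, Theorem~\ref{thm:intro-I} gives $k\text{-}\dell_BL_i\leq k\text{-}\dell_{[T,U]}S_i$. For the remaining simples, $\Omega L_j=\rad Q_j=F(N_j)$, and here I would use the standard estimate $k\text{-}\dell_BL_j\leq (k+1)\text{-}\dell_B\Omega L_j+1$. This follows by shifting: if $\Omega^{m}(\Omega L_j)$ is a stable summand of $\Omega^{m+k+1}Y$, then $\Omega^{m+1}L_j$ is a stable summand of $\Omega^{(m+1)+k}Y$. Combining with the left inequality of Theorem~\ref{thm:intro-I} gives the bound $(k+1)\text{-}\dell_{[T,U]}N_j+1$. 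Each term is at most $g+1$ by $k$-independence. Then $\findim B^{\op}\leq 1\text{-}\dell B\leq g+1$ follows from Gélinas \cite{Gelinas}.

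\emph{Wide case.} If the heart is wide, it is abelian with projective generator $P$. The inclusion of the image is submodule closed, and since $F$ is exact on an abelian category the image is also closed under cokernels. The image contains $B=F(P)$, hence contains every quotient of a free module, so $F$ is essentially surjective. Therefore $F$ is an exact equivalence that identifies projectives and syzygies, and the relative objectwise levels coincide with the classical ones. Every simple is in the image, so the index set of the $N_j$ is empty and the XZ level reduces to the classical $k\text{-}\dell B$. The global levels agree as well.

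The main obstacle I anticipate is the step pulling witnesses back. One must check carefully that stable summands in $\modB$ between image objects descend to the heart, using full faithfulness and the fact that projectives of the heart map onto $\proj B$.
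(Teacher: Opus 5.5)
Your overall route is the paper's: independence of $k$, the one-extra-loop sandwich obtained from submodule closure, the radical correction $\Omega_BL_j\cong F(N_j)$ for the finite test, and essential surjectivity in the wide case. Your $k$-independence argument iterates witnesses directly rather than passing through the strata $\CC_r$ as in Lemma~\ref{lem:global-stabilization}. That is valid; the paper's version additionally identifies the common value as the first stabilization index, which it reuses in Section~\ref{sec:finiteness}. However, your proof of the upper bound $\operatorname{glo}\text{-}\dell B\leq g+1$ rests on a false inequality.

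You write $k\text{-}\dell_BM\leq k\text{-}\dell_B\Omega_BM+1$. This already fails for $k=1$. Every syzygy has delooping level zero, since $\Omega_BM$ is trivially a stable retract of $\Omega_B^{0+1}Y$ with $Y=M$. Your inequality would therefore give $\dell_BM\leq1$ for every module over every algebra, and hence $\findim B^{\op}\leq1$ by G\'elinas. This is contradicted by any algebra of global dimension two, for instance the algebra $B$ of Example~\ref{ex:hereditary-sharp}.

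Shifting a witness for $\Omega_BM$ down by one degree uses up one loop. What is true is the estimate you state yourself in the finite-test paragraph:
\[
 k\text{-}\dell_BM\leq(k+1)\text{-}\dell_B\Omega_BM+1 .
\]
With it, and with $F(X)\cong\Omega_BM$, the chain becomes
\[
 k\text{-}\dell_BM\leq(k+1)\text{-}\dell_BF(X)+1
 \leq(k+1)\text{-}\dell_{[T,U]}X+1
 \leq\operatorname{glo}\text{-}(k+1)\text{-}\dell[T,U]+1=g+1 .
\]
The last equality is exactly where independence of $k$ on the heart is needed. This is the paper's proof of Theorem~\ref{thm:global-sharp}: a $2$-loop witness $\Omega_{T,U}^gX\mid\Omega_{T,U}^{g+2}Y$ is sent by $F$ to $\Omega_B^{g+1}M\mid\Omega_B^{g+2}F(Y)$.

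The rest of your proposal is correct. This covers the lower bound via the right-hand sandwich inequality together with $k$-independence over $B$, the finite-test bound and G\'elinas' inequality, and the wide case via cokernel closure of the image.
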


(See Theorem~\ref{thm:global-sharp},
Corollary~\ref{cor:global-comparison}, and
Theorem~\ref{thm:wide-equality}.)

The independence of the global level from $k$ follows from a general
stabilization observation for exact categories with enough projectives;
see Lemma~\ref{lem:global-stabilization}. Combined with the submodule
closure of the image, it gives both inequalities in the global
comparison. In particular, the global relative level is finite if and
only if the global level of $B$ is finite. Objectwise levels can still
depend on $k$, and the finite-test and global invariants need not coincide. The correction term in Theorem~\ref{thm:intro-II} is necessary:
Example~\ref{ex:nonice} gives a connected hereditary algebra with a
nonzero lower endpoint for which every global relative level vanishes
but $\findim B^{\op}=1$.

Our next result transfers syzygy finiteness between the heart and its
endomorphism algebra.  For $r\geq0$, put
\[
 \CC_r=\add\{\Omega_{T,U}^rX\oplus P'
                 \mid X\in\HH_{T,U},\ P'\in\add P\}.
\]
Thus $\CC_0=\HH_{T,U}$.  An algebra is syzygy-finite if, for some
$r$, the additive closure of its $r$th syzygies and projectives has
only finitely many indecomposable isomorphism classes.

\begin{theorem}\label{thm:intro-III}
The algebra $B$ is syzygy-finite if and only if some $\CC_s$ is
representation-finite. If $\CC_s$ is representation-finite and contains $m_s$ indecomposable
isomorphism classes outside $\add P$, then, for every $k\geq1$,
\[
 \operatorname{glo}\text{-}k\text{-}\dell[T,U]\leq s+m_s,
 \qquad
 \max\{\findim B,\findim B^{\op}\}\leq s+m_s+1.
\]
If the heart is representation-finite, then $B$ is
torsionless-finite and $\operatorname{repdim}B\leq3$.
\end{theorem}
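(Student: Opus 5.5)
The plan is to work entirely through the realization of Theorem~\ref{thm:intro-I}: $F$ is fully faithful and exact, its image is closed under submodules, and it preserves minimal projective resolutions. The first consequence is that $F$ carries relative syzygies to syzygies. For $X\in\HH_{T,U}$ and $r\geq0$ we get $F(\Omega^r_{T,U}X)\cong\Omega^r_BF(X)$, and $F(\add P)=\proj B$. Moreover, every first $B$-syzygy lies in the image of $F$, since $\Omega_BM\subseteq Q$ is a submodule of a projective and $Q\cong F(P')$.

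From this I will compare the strata. Let $\Omega^r(\operatorname{mod}B)$ denote the add-closure of the $r$th syzygies together with the projectives. For $r\geq1$, every $\Omega^{r}_BM$ equals $\Omega^{r-1}_B(\Omega_BM)$, and $\Omega_BM\cong F(X)$ for some $X\in\HH_{T,U}$. Hence
\[
F(\CC_{r-1})\supseteq\Omega^r(\operatorname{mod}B)\supseteq F(\CC_r),
\]
where the second inclusion holds because $F(\Omega^r_{T,U}X)=\Omega^r_BF(X)$. Since $F$ is fully faithful, it preserves indecomposability and isomorphism classes. So if $\CC_s$ is representation-finite, then $\Omega^{s+1}(\operatorname{mod}B)$ has finitely many indecomposables. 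Conversely, if $\Omega^r(\operatorname{mod}B)$ is finite, then $\CC_r$ is finite. This gives the equivalence.

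For the bounds, I use the standard argument that finitely many indecomposable non-projective modules in a syzygy-closed class force delooping. Suppose $\CC_s$ has $m_s$ non-projective indecomposables $Y_1,\dots,Y_{m_s}$. Consider the relative syzygy map $\Omega_{T,U}$ acting on them, which sends $\CC_s$ into $\CC_{s+1}\subseteq\CC_s$. Each $Y_i$ either reaches zero after at most $m_s$ steps, or lands in a cycle of nonprojective summands. In the cyclic case, $Y$ in the cycle is a summand of $\Omega^{j}_{T,U}Y$ for $j$ arbitrarily large, up to stable equivalence. Tracking a summand along a pigeonhole chain of length at most $m_s$ then yields: for any $X$ in the heart, $\Omega^s_{T,U}X$ decomposes into $Y_i$'s, and each $Y_i$ is a stable summand of $\Omega^{k+n}_{T,U}Z$ for some $Z$ and some $n\leq m_s$. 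This gives $k\text{-}\dell_{[T,U]}X\leq s+m_s$. This step is where I expect the main obstacle: making the counting give exactly $m_s$ uniformly in $k$. The cleanest route is to invoke Lemma~\ref{lem:global-stabilization} (independence of the global level from $k$) and prove the bound only for $k=1$.

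Then Theorem~\ref{thm:intro-II} gives $\findim B^{\op}\leq g+1\leq s+m_s+1$. For $\findim B$, I use the standard fact that a module of finite projective dimension $d$ has $\Omega^{s+1}_BM\in F(\CC_s)$ with finite projective dimension. I then argue as in the classical syzygy-finite bound: a minimal resolution of a non-projective module among the $m_s$ indecomposables, each having finite projective dimension, terminates after at most $m_s$ further steps. The count of distinct nonprojective summands gives $\operatorname{pd}\leq m_s$, hence $\operatorname{pd}M\leq s+m_s+1$.

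Finally, suppose the heart is representation-finite. Every torsionless $B$-module is a submodule of a projective, so it lies in $F(\HH_{T,U})$, which has finitely many indecomposables; hence $B$ is torsionless-finite. Ringel's theorem that torsionless-finite algebras have representation dimension at most $3$ then gives $\operatorname{repdim}B\leq3$.
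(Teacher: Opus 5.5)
Your transfer of syzygy-finiteness via $\mathcal D_{r+1}(B)\subseteq F(\CC_r)\subseteq\mathcal D_r(B)$ is the paper's argument (Proposition~\ref{prop:syzygy-transfer}, Corollary~\ref{cor:two-sided}). So are the two finitistic bounds: $g+1$ for $B^{\op}$, and the chain of decreasing projective dimensions inside the syzygy-closed category $F(\CC_s)$ for $B$. The torsionless-finiteness step followed by Ringel's theorem is also the same. The genuine difference is the bound $\operatorname{glo}\text{-}k\text{-}\dell[T,U]\le s+m_s$. The paper (Lemma~\ref{lem:stabilization}) works with whole strata. The chain $\mathcal L_s(T,U)\supseteq\mathcal L_{s+1}(T,U)\supseteq\cdots$ has at most $m_s$ strict steps, since each removes a nonprojective class, so $\CC_{s+m_s}=\CC_{s+m_s+1}$. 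The identity $\CC_{r+1}=\add(\Omega_{T,U}\CC_r\cup\add P)$ then gives $\CC_{s+m_s}=\CC_{s+m_s+k}$. That is the bound for all $k$ at once, and it even identifies the stabilization index as $\max\{s,g\}$.

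Your objectwise pigeonhole also works, but the sentence that carries it is misstated. A summand $Y_i$ of $\Omega^s_{T,U}X$ lying on no cycle need not be a stable summand of any high syzygy. Moreover, a relation like $Y_i\mid\Omega^{k+n}_{T,U}Z$ has the wrong exponents: you need $\Omega^{d}_{T,U}X$ to be a stable retract of $\Omega^{d+k}_{T,U}Y$ for some $d\le s+m_s$. Since $\Omega_{T,U}$ of an indecomposable may decompose, you should track chains rather than orbits. The correct version runs as follows.
\begin{itemize}
\item Every nonprojective indecomposable summand $W_{m_s}$ of $\Omega^{s+m_s}_{T,U}X$ ends a chain $W_0,\dots,W_{m_s}$ with $W_0\mid\Omega^s_{T,U}X$ and $W_{j+1}\mid\Omega_{T,U}W_j$.
\item All the $W_j$ are nonprojective indecomposables of $\CC_s$, and there are $m_s+1$ of them, so $W_a\cong W_b$ for some $a<b$.
\item Hence $W_b\mid\Omega^{N(b-a)}_{T,U}W_b$ for all $N\ge1$.
\item Therefore $W_{m_s}$ is a direct summand of $\Omega^{j}_{T,U}$ of some object for every $j$.
\end{itemize}
Taking $j=s+m_s+k$ gives the bound for every $k$ directly. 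Your detour through $k=1$ and Lemma~\ref{lem:global-stabilization} is harmless but unnecessary. Your route gives the extra objectwise fact that all deep syzygy summands are recurrent. The paper's route is shorter and avoids the branching bookkeeping.
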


(See Theorem~\ref{thm:finiteness},
Proposition~\ref{prop:syzygy-transfer}, and Corollary~\ref{cor:two-sided}.)

The stabilization argument extends that of Xu--Zhang
\cite[Lemmas~5.1--5.2]{XuZhang} to arbitrary interval hearts and to any
finite relative syzygy stratum. The representation-dimension conclusion
uses Ringel's theorem for torsionless-finite algebras
\cite{RingelRepdim}. One application requires only that
$A/\operatorname{ann}_A P$ be representation-finite. In particular,
it applies to minimal representation-infinite algebras when $P$ is
not faithful. A separate local condition,
$\Ext_A^2(X,Y)=0$ for all $X,Y\in\HH_{T,U}$, implies
$\operatorname{gl.dim}B\leq2$; we give a sharp example over a hereditary
algebra with a nonzero lower endpoint.

The finite-stratum hypothesis is not needed when the heart is
Frobenius. In that case the realization identifies the heart with
$\operatorname{Gproj}B=\Sub B$, and
\[
 \operatorname{id}(B_B)\leq1,\qquad
 \operatorname{id}({}_BB)\leq1.
\]
Thus $B$ is Iwanaga--Gorenstein of dimension at most one, and all global
relative levels vanish.  The embedding of a Frobenius category into
Gorenstein-projective modules is standard \cite{ChenFrobenius}; here
submodule closure gives essential surjectivity and the dimension bound.
We realize this situation by the
monomorphism category over a self-injective algebra, using its
Frobenius structure from \cite{ChenMono}. The resulting example is
connected, representation-infinite, and of infinite global dimension;
none of its relative syzygy strata is representation-finite.  The
classical properties of monomorphism categories and triangular
algebras are used here to exhibit the scope and sharpness of the
interval comparison, rather than asserted as new results.
A Kronecker example in Section~\ref{sec:examples} also shows that
allowing a lower torsion class which is not functorially finite gives
hearts outside the class obtained from two support $\tau$-tilting
endpoints.

The paper is organized as follows.  Section~\ref{sec:preliminaries}
recalls interval hearts and proves their submodule-closed realization.
Section~\ref{sec:comparison} defines the relative invariants and proves
the comparison theorems.  Section~\ref{sec:finiteness} treats wide
intervals, finite syzygy strata, and the first stratum for ICE hearts.
Section~\ref{sec:applications} gives the applications, and
Section~\ref{sec:examples} contains further computations and questions.

Throughout the paper, $A$ is a finite-dimensional basic algebra over a field
$\kk$, and all modules are finitely generated right modules.
Subcategories are full and closed under finite direct sums,
isomorphisms, and direct summands.  Multiplication in an endomorphism
algebra is composition, so $\Hom_A(P,X)$ is a right $\End_A(P)$-module
by precomposition.  We write $\findim\Lambda$ for the supremum of the
finite projective dimensions of finitely generated right
$\Lambda$-modules.  Empty suprema of nonnegative invariants are zero;
this also covers the zero heart and the zero endomorphism algebra.

\section{Preliminaries and interval hearts}\label{sec:preliminaries}

In this section, we recall some preliminaries on torsion intervals, interval syzygies and submodule-closed realizations of intervals.

\subsection{Torsion intervals}

We write $\stau A$ for the set of isomorphism classes of basic support
$\tau$-tilting $A$-modules, and $\Fac M$ for the factors of finite
direct sums of copies of $M$. A torsion class in $\modA$ is a
subcategory closed under extensions and factor modules. We write
$\tors A$ for the poset of torsion classes ordered by inclusion. For a
subcategory $\CC$ of $\modA$, set
\[
 \CC^{\perp}
 =\{X\in\modA\mid \Hom_A(C,X)=0\text{ for all }C\in\CC\}.
\]
If $M$ is an $A$-module, then $M^{\perp}=(\add M)^{\perp}$.
A subcategory is \emph{wide} if it is closed under kernels, cokernels,
and extensions in $\modA$.

Let $T,U\in\stau A$ with $\Fac T\subseteq\Fac U$. We abbreviate this
relation to $T\leq U$.  The heart of the interval
$[\Fac T,\Fac U]$ is
\begin{equation}\label{eq:heart}
  \HH_{T,U}=\Fac U\cap(\Fac T)^{\perp}
            =\Fac U\cap T^{\perp}.
\end{equation}
The second equality follows because every object of $\Fac T$ is a factor
of an object in $\add T$. The category $\HH_{T,U}$ is closed under
extensions and images; see \cite[Remark~2.5 and Lemma~2.6]{EnomotoSakai}. Consequently
it inherits an exact structure from $\modA$: the conflations are precisely
the short exact sequences in $\modA$ all of whose terms lie in
$\HH_{T,U}$.

For modules $M,X$, recall that the trace of $M$ in $X$ is
\[
  \tr_M(X)=\sum_{f\in\Hom_A(M,X)}\operatorname{Im}f.
\]
The torsion radical of $\Fac T$ evaluated at $X$ is $\tr_T(X)$.

\begin{theorem}[Enomoto--Sakai]\label{thm:ES-progenerator}
Let $T,U\in\stau A$ with $T\leq U$. Then
\[
  R_{T,U}:=U/\tr_T(U)
\]
is an Ext-progenerator of $\HH_{T,U}$. In particular,
\[
 \proj\HH_{T,U}=\add R_{T,U},
\]
and for every $X\in\HH_{T,U}$ there is a short exact sequence
\[
 0\longrightarrow X_1\longrightarrow P_0\longrightarrow X\longrightarrow0
\]
with $P_0\in\add R_{T,U}$ and $X_1\in\HH_{T,U}$.
\end{theorem}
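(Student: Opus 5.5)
We prove that $R=R_{T,U}=U/\tr_T(U)$ lies in $\HH_{T,U}$, is Ext-projective there, and covers every object by a conflation whose kernel stays in the heart. The equality $\proj\HH_{T,U}=\add R$ then follows by the standard argument. Throughout we use the standard facts about a support $\tau$-tilting module $U$: $\Fac U={}^{\perp}(\tau U)$, so $\Ext^1_A(U,\Fac U)=0$ and $\Fac U$ is a functorially finite torsion class.

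\emph{Step 1: $R$ lies in the heart.} The module $R$ is a factor of $U$, so $R\in\Fac U$. Since $\tr_T$ is the torsion radical of the torsion class $\Fac T$, the quotient $U/\tr_T(U)$ is torsion-free, so $\Hom_A(T,R)=0$. Hence $R\in\HH_{T,U}$.

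\emph{Step 2: $R$ is Ext-projective in the heart.} Let $X\in\HH_{T,U}$ and take a conflation $0\to X\to E\to R\to0$ with $E$ in the heart. Apply $\Hom_A(-,X)$ to $0\to\tr_T(U)\to U\to R\to0$ to get the exact sequence
\[
\Hom_A(\tr_T(U),X)\to\Ext^1_A(R,X)\to\Ext^1_A(U,X).
\]
The left term vanishes because $\tr_T(U)\in\Fac T$ and $X\in T^\perp$. The right term vanishes because $X\in\Fac U$. Therefore $\Ext^1_A(R,X)=0$.

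\emph{Step 3: enough projectives.} Let $X\in\HH_{T,U}$. Take a right $\add U$-approximation $f\colon U_0\to X$; it is surjective since $X\in\Fac U$. Every map $U_0\to X$ kills $\tr_T(U_0)$ because $X\in T^\perp$, so $f$ factors through $\bar f\colon P_0=U_0/\tr_T(U_0)\to X$, with $P_0\in\add R$. Set $X_1=\ker\bar f$.
\begin{itemize}
\item $X_1\in T^\perp$, since $T^\perp$ is closed under submodules.
\item To see $X_1\in\Fac U$, first let $K=\ker f$. Applying $\Hom_A(U,-)$ to $0\to K\to U_0\to X\to 0$ and using the approximation property together with $\Ext^1_A(U,U_0)=0$ gives $\Ext^1_A(U,K)=0$. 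Next, apply the snake lemma to the map from $0\to K\to U_0\to X\to0$ to $0\to X_1\to P_0\to X\to0$. The map $U_0\to P_0$ is surjective with kernel $\tr_T(U_0)$, and the map on $X$ is the identity. The snake lemma therefore yields $0\to\tr_T(U_0)\to K\to X_1\to0$, so $X_1$ is a factor of $K$.
\end{itemize}
The main obstacle is showing $K\in\Fac U$, or at least that its image $X_1$ is. I would first try the fact that $\Fac U={}^\perp(\tau U)$ is the Ext-projective side. For this, take a minimal left $\add U$-approximation of $A$, $A\to U'\to C\to0$, with $U\oplus C$-type completion (Adachi--Iyama--Reiten). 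It shows that $\Ext^1_A(U,K)=0$ together with a factor relation gives $\Hom_A(K,\tau U)=0$ modulo projectives, and the Auslander--Reiten formula then places $X_1$ in ${}^\perp(\tau U)$.

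If that route fails, the cleaner fallback is to cite \cite[Lemma~4.17, Corollary~4.19]{EnomotoSakai} directly, or to use the torsion-class characterization: take $X_1\to t(X_1)$ with respect to $\Fac U$ and show the torsion-free part vanishes by a Hom-argument against $\Ext^1_A(R,-)$.

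\emph{Step 4: identifying $\proj\HH_{T,U}$.} Let $Q$ be Ext-projective in the heart. Its cover $0\to Q_1\to P_0\to Q\to0$ from Step 3 is a conflation in the heart, so $\Ext^1(Q,Q_1)=0$ makes it split, and $Q\in\add R$. Conversely, every object of $\add R$ is Ext-projective by Step 2. Hence $\proj\HH_{T,U}=\add R$.
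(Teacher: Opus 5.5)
\textbf{Gap in Step 3.} Steps 1, 2 and 4 are correct, and the reduction in Step 3 is the right one. You correctly get $\Ext^1_A(U,K)=0$, and the snake lemma gives $X_1$ as a factor of $K$, so everything hinges on $K\in\Fac U$. You never prove this. It carries the entire ``enough projectives'' half of the statement, and Step 4 depends on it as well.

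The route you sketch does not work as written:
\begin{itemize}
\item The Auslander--Reiten formula is $\Ext^1_A(U,K)\cong D\overline{\Hom}_A(K,\tau U)$, with morphisms taken modulo those factoring through \emph{injectives}, not projectives.
\item The left approximation of $A$ plays no role here.
\item $\Fac U={}^{\perp}(\tau U)$ is false for non-sincere support $\tau$-tilting modules (take $U=0$). The correct description is $\Fac U={}^{\perp}(\tau U)\cap Q^{\perp}$ for the support $\tau$-tilting pair $(U,Q)$.
\end{itemize}
Your fallback of citing Enomoto--Sakai is circular as a proof of this statement. It is, however, literally what the paper does: its proof is only a reference to Enomoto--Sakai's Corollary~4.19.

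\textbf{How to close it.} Either of the following works.
\begin{itemize}
\item \emph{Corrected Auslander--Reiten route.} Since $\Ext^1_A(U,K)=0$, every $h\colon K\to\tau U$ factors through an injective $I$. Extending $K\to I$ along $K\hookrightarrow U_0$ extends $h$ to a map $U_0\to\tau U$, which is zero because $\Hom_A(U,\tau U)=0$. Hence $\Hom_A(K,\tau U)=0$. Also $\Hom_A(Q,K)\subseteq\Hom_A(Q,U_0)=0$. So $K\in{}^{\perp}(\tau U)\cap Q^{\perp}=\Fac U$ by Adachi--Iyama--Reiten.
\item \emph{Tilting route,} using the same input as the proof of Theorem~\ref{thm:submodule-image}. $U$ is tilting over $\bar A=A/\operatorname{ann}_AU$, and $K\subseteq U_0$ is an $\bar A$-module. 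Moreover $\Ext^1_{\bar A}(U,K)$ embeds in $\Ext^1_A(U,K)=0$, because an extension of $\bar A$-modules that splits over $A$ splits over $\bar A$. So $K$ lies in the tilting torsion class $\{Y\in\operatorname{mod}\bar A\mid\Ext^1_{\bar A}(U,Y)=0\}=\Fac U$.
\end{itemize}
Either way, all you need is the case $T=0$, namely that $U$ is an Ext-progenerator of $\Fac U$. Your torsion-reduction and snake-lemma argument then upgrades that case to the interval, for any conflation $0\to K\to U_0\to X\to0$ in $\Fac U$.

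Completed this way, your proof is self-contained where the paper only cites. It also shows the statement is the $T=0$ case plus the torsion reduction $U\mapsto U/\tr_T(U)$, which is the mechanism the paper exploits in Theorem~\ref{thm:submodule-image}.
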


\begin{proof}
This is \cite[Corollary~4.19]{EnomotoSakai}, after interchanging the
letters used for the lower and upper endpoints there.
\end{proof}

\begin{remark}[Basic convention]\label{rem:basic-convention}
Fix a basic module $P_{T,U}$ with
$\add P_{T,U}=\add R_{T,U}$. Thus repeated isomorphic summands of
$U/\tr_T(U)$ are deleted. The stable category and relative syzygies
are determined by this common additive subcategory. We always form
$B_{T,U}=\End_A(P_{T,U})$ using the fixed basic representative.
\end{remark}

\subsection{Stable categories and interval syzygies}

Let $\EE$ be a full, extension-closed, summand-closed subcategory of
$\modA$ with Ext-progenerator $P$, equipped with its inherited exact
structure. In particular, $\EE$ is a Hom-finite Krull--Schmidt exact
category with enough projectives. Its projectively stable
category is
\[
  \underline{\EE}=\EE/[\add P],
\]
where $[\add P]$ is the ideal of morphisms factoring through an object of
$\add P$. We say that $X$ is a \emph{stable retract} of $Y$ if there are
maps $a:X\to Y$ and $b:Y\to X$ such that $ba-1_X$ factors through
$\add P$. Equivalently, $X$ is a direct summand of $Y\oplus P'$
for some $P'\in\add P$. Indeed, if $ba-1_X=vu$ through $P'$,
then $(a,u):X\to Y\oplus P'$ has retraction $(b,-v)$; the converse
follows on passing to the stable category.

Minimal projective deflations exist: start with any projective
deflation and remove redundant summands of its domain. The removed
summands split off its kernel, which remains in $\EE$.
For each $X\in\EE$, choose a minimal projective deflation
$p_X:P_X\twoheadrightarrow X$ and define
\[
  \Omega_{\EE}X=\ker p_X.
\]
The usual Schanuel argument in exact categories shows that this construction
induces an additive endofunctor
\[
  \Omega_{\EE}:\underline{\EE}\longrightarrow\underline{\EE};
\]
see, for example, \cite{Buhler}. We put $\Omega_{\EE}^0X=X$.

Applying this construction to $\EE=\HH_{T,U}$ and
$P=P_{T,U}$ gives the interval syzygy functor.  We write
\[
  \Omega_{T,U}=\Omega_{\HH_{T,U}}.
\]

\begin{remark}\label{rem:intrinsic-heart}
The basic Ext-progenerator of an exact Krull--Schmidt category, when it
exists, is unique up to isomorphism. Hence $\add P_{T,U}$ and
$\Omega_{T,U}$ depend only on the exact category $\HH_{T,U}$. In
particular, if two intervals have the same heart, all objectwise and global
invariants introduced below agree. Thus our basic invariant is an invariant of the exact heart.
Section~\ref{sec:examples} discusses a possible refinement using subintervals.
\end{remark}

The following restricted Yoneda lemma is the mechanism behind all later
comparisons. We record the proof because it does not require the heart to
be ICE-closed.

\begin{proposition}[Restricted Yoneda embedding]\label{prop:Yoneda}
Let $\EE$ be a full extension-closed subcategory of $\modA$ with
Ext-progenerator $P$, and put $B=\End_A(P)$. Then
\[
 F=\Hom_A(P,-):\EE\longrightarrow\modB
\]
has the following properties.
\begin{enumerate}[label=\textnormal{(\arabic*)},leftmargin=2.2em]
\item $F$ is fully faithful and exact.
\item $F$ restricts to an equivalence $\add P\simeq\proj B$.
\item If projective covers are chosen minimally, then for every $r\geq0$
      and $X\in\EE$ there is an isomorphism
      \[
        F(\Omega_{\EE}^{r}X)\cong\Omega_B^{r}F(X).
      \]
      Without fixed choices, the same assertion holds canonically in
      $\underline{\modB}$.
\item $F$ induces a fully faithful functor
      $\underline{\EE}\longrightarrow\underline{\modB}$.
\item For all $X,Y\in\EE$ and $r\geq0$ there are natural isomorphisms
      \[
       \Ext^r_{\EE}(X,Y)\cong\Ext^r_B(F(X),F(Y)),
      \]
      where the left side is computed in the exact category $\EE$.
      The image is extension-closed, and $F$ is an exact equivalence
      onto its image with the inherited exact structure.
\end{enumerate}
\end{proposition}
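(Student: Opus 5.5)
This is the standard restricted Yoneda argument, and the plan is to prove (1)--(5) in order, each from the preceding ones.

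\emph{Exactness.} Every conflation $0\to X\to Y\to Z\to0$ in $\EE$ is a short exact sequence in $\modA$. Applying $\Hom_A(P,-)$ gives a sequence that is left exact. It is also right exact, because $\Ext^1_A(P,X)=0$ for $X\in\EE$: Ext-projectivity of $P$ in $\EE$ means precisely that the inherited $\Ext^1$ vanishes, and for an extension-closed subcategory this agrees with $\Ext^1_A$ on objects of $\EE$.

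\emph{Equivalence on $\add P$ and full faithfulness.} Part (2) is the usual projectivization: $F(P)=B_B$, and $F$ is fully faithful on $\add P$ since $\Hom_B(F(P),F(P'))\cong\Hom_A(P,P')$ on summands; idempotents split on both sides. For full faithfulness on all of $\EE$, take a two-step projective presentation inside $\EE$: a deflation $P_0\to X$ with kernel $X_1\in\EE$, then a deflation $P_1\to X_1$. This gives a sequence $P_1\to P_0\to X\to0$, exact in $\modA$, whose image under $F$ is again exact by exactness of $F$ applied to the two conflations. Given $Y\in\EE$, compare the left exact sequences
\[
0\to\Hom_A(X,Y)\to\Hom_A(P_0,Y)\to\Hom_A(P_1,Y)
\]
and
\[
0\to\Hom_B(FX,FY)\to\Hom_B(FP_0,FY)\to\Hom_B(FP_1,FY).
\]
On the middle and right terms the maps are the Yoneda isomorphisms $\Hom_A(P_i,Y)\cong\Hom_B(FP_i,FY)$, valid since $P_i\in\add P$; this is checked on $P$ itself, where both sides equal $F(Y)$. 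The five lemma then gives the isomorphism on the left terms. This comparison is the step needing most care, namely naturality of the Yoneda identification and the check that the presentation stays in $\EE$; it is where extension-closure and the Ext-progenerator property are really used.

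\emph{Syzygies.} For (3), $F$ sends a minimal projective deflation $p_X$ to an epimorphism $F(P_X)\to F(X)$ with projective domain, and the kernel maps to $F(\Omega_\EE X)$ by exactness. Minimality transfers because a projective cover is characterized by having no nonzero summand of $P_X$ mapped to zero modulo radical. Full faithfulness identifies radical morphisms, so a split summand of the kernel lying in $\add P$ on the $B$-side would come from one on the $A$-side. Iterating gives $F(\Omega^r_\EE X)\cong\Omega^r_BF(X)$, and without minimal choices the statement holds up to projective summands, that is, in $\underline{\modB}$. Part (4) follows directly: a morphism factors through $\add P$ if and only if its image factors through $\proj B$. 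For the nontrivial direction, any map $F(X)\to Q\to F(Y)$ with $Q$ projective is the image of maps through the preimage of $Q$ in $\add P$, by (2) and full faithfulness.

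\emph{Ext comparison.} For (5), compute $\Ext^r_\EE(X,Y)$ using the projective resolution of $X$ in $\EE$ built from the successive syzygies, which is an acyclic complex of conflations. Applying $F$ yields a projective resolution of $F(X)$ in $\modB$, and the Hom complexes agree by full faithfulness, so the Ext groups coincide. For extension-closure of the image, start with an extension $0\to FY\to E\to FX\to0$ in $\modB$. It corresponds, via the $\Ext^1$ isomorphism, to a conflation $0\to Y\to Z\to X\to0$ in $\EE$. Exactness of $F$ then gives $E\cong F(Z)$ as extensions, so $E$ lies in the image. Finally, since $F$ is fully faithful, exact, and reflects exactness of sequences in its image by (5) for $r=1$, it is an exact equivalence onto its image.
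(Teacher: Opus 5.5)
Your proposal is correct and follows essentially the same route as the paper. Both arguments get exactness from Ext-projectivity of $P$, full faithfulness from a projective presentation in $\EE$ together with the Yoneda isomorphism on $\add P$, transfer of minimality of projective deflations via full faithfulness, and the Ext comparison (with extension closure) by applying $F$ to a relative projective resolution.

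The only difference is cosmetic. You obtain full faithfulness in one step, by comparing the left exact Hom sequences of a two-step presentation. The paper first proves faithfulness and then applies it to the kernel $K$ to factor $g$ through $p$, which implicitly uses the same two-step presentation.
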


\begin{proof}
Since $P$ is Ext-projective in $\EE$, applying $F$ to a conflation in
$\EE$ gives a short exact sequence of $B$-modules.  Thus $F$ is exact.

To prove faithfulness, let $f:X\to Y$ satisfy $F(f)=0$.  Choose a
projective deflation $p:P_0\twoheadrightarrow X$ with $P_0\in\add P$.
Every component of $p$ is a morphism from a direct summand of a finite
sum of copies of $P$ to $X$, so $F(f)=0$ implies $fp=0$.  Since $p$ is
an epimorphism in $\modA$, we obtain $f=0$.

For fullness, let $\alpha:F(X)\to F(Y)$ be a $B$-homomorphism and take a
conflation
\[
 0\longrightarrow K\xrightarrow{i}P_0\xrightarrow{p}X\longrightarrow0
\]
with $P_0\in\add P$.  By the Yoneda isomorphism on $\add P$, the map
$\alpha F(p):F(P_0)\to F(Y)$ is $F(g)$ for some $g:P_0\to Y$.  We have
$F(gi)=0$, hence $gi=0$ by faithfulness.  Therefore $g$ factors uniquely
through $p$, say $g=fp$.  Since $F(p)$ is surjective, $F(f)=\alpha$.
This proves (1).  Part (2) follows from $F(P)=B$ and full faithfulness.

Finally, a minimal projective deflation $P_X\twoheadrightarrow X$ is
sent to a right minimal epimorphism from a projective $B$-module, hence
to a projective cover of $F(X)$. Exactness identifies the kernels.
Iteration proves (3).
For (4), every projective $B$-module is $F(P')$ for some
$P'\in\add P$. Full faithfulness therefore identifies the morphisms
factoring through $\add P$ with those factoring through projective
$B$-modules. Passing to the corresponding quotient categories proves
the assertion.

For (5), take a projective resolution of $X$ in $\EE$.
Its image is a projective resolution of $F(X)$, and full faithfulness
identifies the two Hom complexes obtained using $Y$ and $F(Y)$.
Projective resolutions compute exact-category extensions
\cite{Buhler}, so their cohomology gives the asserted isomorphisms.
For $r=1$, every extension of $F(X)$ by $F(Y)$ is therefore equivalent
to the image of a conflation in $\EE$.  This proves extension closure
and reflection of the inherited exact structure.
\end{proof}

\subsection{Submodule-closed realization for arbitrary intervals}

The following result is the extra ingredient needed to pass from an
objectwise comparison to a bound on all simple modules of the
endomorphism algebra.

\begin{theorem}[Submodule-closed image]\label{thm:submodule-image}
Let $T,U\in\stau A$ with $T\leq U$, set $P=P_{T,U}$ and
$B=\End_A(P)$.  Then the essential image of
$F=\Hom_A(P,-):\HH_{T,U}\to\modB$ is closed under submodules.
In particular, $\rad_B F(P_i)$ is in the image for every
indecomposable summand $P_i$ of $P$.
\end{theorem}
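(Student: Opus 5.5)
We follow the strategy sketched in the introduction and pass through the larger algebra $C=\End_A(U)$. Put $G=\Hom_A(U,-)$ and $R=R_{T,U}=U/\tr_T(U)$.

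The first step is to show that $G$ is fully faithful on $\Fac U$ and that its image is closed under submodules in $\modC$. Since $U$ is support $\tau$-tilting, $U$ is an Ext-progenerator of $\Fac U$, so Proposition~\ref{prop:Yoneda} applies. For submodule closure we use the tilting/Brenner--Butler description: $G(\Fac U)$ is the torsion-free class $\mathcal Y=\Sub D(U)$-type category over $C$ (after reducing to the sincere case modulo the annihilator, $U$ being tilting over $A/\operatorname{ann}U$). Torsion-free classes are closed under submodules. Alternatively, let $M\subseteq G(X)$ with $X\in\Fac U$. Take a projective presentation of $M$ by $G(U_0)$ and the induced map $U_0\to X$, and let $X'$ be its image; then $X'\in\Fac U$ and $G(X')\supseteq M$. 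Equality needs the torsion-free property, so we rely on the standard tilting theorem.

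The second step is to identify the heart inside $\modC$. For $X\in\Fac U$,
\[
\Hom_A(T,X)=0 \iff \Hom_C(G(T),G(X))=0,
\]
using $T\in\Fac T\subseteq\Fac U$ and full faithfulness. Hence
\[
G(\HH_{T,U})=G(\Fac U)\cap G(T)^\perp .
\]
The class $G(T)^\perp$ is closed under submodules, since a submodule receiving a nonzero map from $G(T)$ gives a nonzero map into the ambient module. So $G(\HH_{T,U})$ is submodule-closed in $\modC$.

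The third step passes from $C$ to $B$. Torsion reduction $U\to R$ induces a ring map
\[
\pi\colon C\to\End_A(R),\qquad f\mapsto\bar f .
\]
This is well defined because $\tr_T$ is functorial. It is surjective because $U$ is Ext-projective in $\Fac U$ and $\tr_T(U)\in\Fac U$, so $U\to R$ lifts maps $U\to R$. Let $I=\ker\pi$.

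For $X\in\HH_{T,U}$, every map $U\to X$ kills $\tr_T(U)$, since $\Hom(T,X)=0$. Hence
\[
G(X)=\Hom_A(U,X)=\Hom_A(R,X),
\]
and $I$ annihilates it, since $\bar f=0$ means $f$ has image in $\tr_T(U)$. Thus $G(\HH_{T,U})$ lies in $\operatorname{mod}(C/I)$, and a $C/I$-submodule of $G(X)$ is a $C$-submodule, hence lies in $G(\HH_{T,U})$. So $\Hom_A(R,-)$ embeds $\HH_{T,U}$ into $\operatorname{mod}\End_A(R)$ with submodule-closed image. The Morita equivalence between $\End_A(R)$ and $B$, induced by $\add R=\add P$, transports this to $F$ and preserves submodule closure.

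Finally, $\rad_B F(P_i)$ is a submodule of $F(P_i)$ with $P_i\in\HH_{T,U}$, so it lies in the image.

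The main obstacle is the first step, namely the submodule closure of $G(\Fac U)$ in $\modC$ for a support $\tau$-tilting $U$. I would cite the Brenner--Butler theorem for $U$ as a tilting $A/\operatorname{ann}U$-module, noting that $\Fac U\subseteq\operatorname{mod}(A/\operatorname{ann}U)$ and that $C$ is unchanged.
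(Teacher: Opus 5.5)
Your proposal is correct and follows the paper's proof step for step. Both arguments use tilting over $A/\operatorname{ann}_AU$ to identify $G(\Fac U)$ with $\Sub_C DU$, the description $G(\HH_{T,U})=G(\Fac U)\cap G(T)^\perp$, the surjection $C\twoheadrightarrow\End_A(R)$ coming from $\Ext^1_A(U,\tr_T(U))=0$, restriction along it, and finally Morita equivalence to the basic $B$. The one point you leave implicit is that a $C$-isomorphism $L\cong G(X')$ between modules killed by $\ker\pi$ is automatically $\End_A(R)$-linear, because $\pi$ is surjective; the paper states this in one line.
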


\begin{proof}
We first use the actual quotient $R=U/\tr_T(U)$, without deleting
repeated summands. Put $C=\End_A(U)$, $B'=\End_A(R)$ and
$G=\Hom_A(U,-)$. The support $\tau$-tilting module $U$ is a tilting
module over $A/\operatorname{ann}_A U$: apply
\cite[Proposition~2.2(c)]{AIR} over an idempotent quotient on which
$U$ is $\tau$-tilting. The tilting theorem therefore identifies
$\Fac U$ with the submodule-closed subcategory
$\YY=\Sub_C DU$ of $\mod C$, where $D=\Hom_{\kk}(-,\kk)$; see \cite[Theorem~2.4]{XuZhang}.
In particular, $G$ is fully faithful on $\Fac U$.

Write $tU=\tr_T(U)$ and let $q:U\twoheadrightarrow R$ be the quotient
map. The submodule $tU$ is invariant under every endomorphism of $U$.
Consequently, passage to the quotient gives an algebra homomorphism
\[
 \rho:C\longrightarrow B',\qquad c\longmapsto\overline c,
 \quad \overline c q=qc.
\]
It is surjective. Indeed, $tU\in\Fac T\subseteq\Fac U$ and $U$
is Ext-projective in $\Fac U$, so $\Ext_A^1(U,tU)=0$.
For $b\in B'$, the map $bq:U\to R$ therefore lifts along $q$ to
$c:U\to U$. Thus $qc=bq$ and $\rho(c)=b$. In particular,
\begin{equation}\label{eq:endomorphism-quotient}
 B'\cong C/\{c\in C\mid\operatorname{Im}c\subseteq tU\}.
\end{equation}

Every map $U\to X$ with $X\in\HH_{T,U}$ vanishes on $tU$.
Precomposition with $q$ induces an isomorphism
\begin{equation}\label{eq:restriction}
 \Hom_A(R,X)\xrightarrow{\sim}\Hom_A(U,X)
\end{equation}
of right $C$-modules, where the left side is restricted along $\rho$.
Both $T$ and $X$ belong to $\Fac U$. Full faithfulness of $G$ gives
\[
 G(\HH_{T,U})=\YY\cap G(T)^\perp
 \quad\text{inside }\mod C.
\]
Both subcategories on the right are closed under submodules.

Now let $L$ be a $B'$-submodule of $\Hom_A(R,X)$. Restriction along
$\rho$ and \eqref{eq:restriction} identify it with a $C$-submodule of
$G(X)$, hence with $G(X')$ for some $X'\in\HH_{T,U}$.
Equation~\eqref{eq:restriction} identifies $G(X')$ with
$\Hom_A(R,X')$. Since $\rho$ is surjective, a $C$-linear map between
$B'$-modules is $B'$-linear. Thus
$L\cong\Hom_A(R,X')$ as $B'$-modules, proving the assertion for $R$.

Finally, $\add R=\add P$. The standard Morita equivalence between
$\mod B'$ and $\mod B$ identifies the two restricted Yoneda functors.
Exact equivalences preserve subobjects, so the assertion holds for the
chosen basic representative $P$. The last statement follows because
$F(P_i)$ belongs to the image.
\end{proof}

\begin{remark}\label{rem:scope}
The theorem uses no cokernel closure of the heart. Thus a separate
condition requiring radical lifts, even only for projectives with
missing simple tops, is automatic for the intervals considered here.
Example~\ref{ex:nonice} exhibits this phenomenon for a non-ICE heart.
\end{remark}

\begin{theorem}[Arbitrary lower torsion classes]\label{thm:arbitrary-lower}
Let $U\in\stau A$, let $\mathcal V\subseteq\Fac U$ be any torsion
class, and denote its torsion radical by $t_{\mathcal V}$. Put
\[
 \HH_{\mathcal V,U}=\Fac U\cap\mathcal V^\perp,
 \qquad R=U/t_{\mathcal V}(U).
\]
Then $R$ is an Ext-progenerator of $\HH_{\mathcal V,U}$.
If $P$ is a basic representative of $\add R$ and $B=\End_A(P)$, the
functor $\Hom_A(P,-)$ identifies $\HH_{\mathcal V,U}$ with a
submodule-closed, extension-closed exact subcategory of $\modB$
containing all projectives. It preserves minimal projective
resolutions. No functorial finiteness of $\mathcal V$ is required.
\end{theorem}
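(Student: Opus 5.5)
The plan is to rerun the argument of Theorem~\ref{thm:submodule-image} with $\tr_T$ replaced by the torsion radical $t_{\mathcal V}$. The only new input is a direct proof that $R=U/t_{\mathcal V}(U)$ is an Ext-progenerator of $\HH:=\HH_{\mathcal V,U}$; \cite[Lemma~4.17]{EnomotoSakai} already gives this, but I would verify it by hand. The steps, in order, are as follows.

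\emph{(a) $\HH$ is extension-closed.} Both $\Fac U$ and $\mathcal V^\perp$ are extension-closed, hence so is their intersection.

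\emph{(b) $R\in\HH$.} Since $R$ is a quotient of $U$, we have $R\in\Fac U$. Moreover $R=U/t_{\mathcal V}U$ is torsion-free for the torsion pair $(\mathcal V,\mathcal V^\perp)$, so $R\in\mathcal V^\perp$.

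\emph{(c) $R$ is Ext-projective in $\HH$.} Let $0\to Y\to E\to R\to0$ be exact with $Y\in\HH$. Pull it back along $q:U\to R$. Since $\Ext^1_A(U,Y)=0$ (because $Y\in\Fac U$), the map $q$ lifts to $h:U\to E$. The composite $t_{\mathcal V}U\to E\to R$ is zero, so $h$ sends $t_{\mathcal V}U$ into $Y$. But $\Hom_A(\mathcal V,Y)=0$, so $h$ vanishes on $t_{\mathcal V}U$ and factors through $R$. This factorisation splits the sequence.

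\emph{(d) Projective deflations exist.} For $X\in\HH$, take a right $\add U$-approximation $U^n\to X$. It is surjective since $X\in\Fac U$, and it factors through $R^n$ because $\Hom(t_{\mathcal V}U,X)=0$. Let $K$ be the kernel of $R^n\to X$. Then $K\in\mathcal V^\perp$, as it is a submodule of $R^n$. To see $K\in\Fac U$, let $K'$ be the kernel of $U^n\to X$. There is an exact sequence $0\to t_{\mathcal V}(U)^n\to K'\to K\to0$, so $K$ is a quotient of $K'$. Since $U^n\to X$ is an $\add U$-approximation and $U$ is Ext-projective in $\Fac U$, the standard argument gives $K'\in\Fac U$: the AIR/Auslander--Smal\o\ fact that kernels of minimal right $\Fac U$-approximations by $\add U$ lie in $\Fac U$. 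Hence $K\in\Fac U$.

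I expect this last point to be the delicate one. If the direct argument fails, I would simply cite \cite[Lemma~4.17]{EnomotoSakai}, as the introduction does. Finally, every projective object $Q$ of $\HH$ is a summand of $R^n$ via its deflation, so $\proj\HH=\add R$.

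\emph{(e) Realization.} Proposition~\ref{prop:Yoneda} now applies verbatim to $\EE=\HH$ with Ext-progenerator $P$, where $\add P=\add R$. It gives that $F$ is fully faithful and exact, that its image is extension-closed with the inherited exact structure, that it contains $\proj B=F(\add P)$, and that $F$ preserves minimal projective resolutions by part (3).

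\emph{(f) Submodule closure.} I repeat the proof of Theorem~\ref{thm:submodule-image}, replacing $\tr_T$ by $t_{\mathcal V}$; the only point to check is that $t_{\mathcal V}U$ is fully invariant. The torsion radical is a subfunctor of the identity, so every endomorphism $c$ of $U$ maps $t_{\mathcal V}U$ into itself. This gives $\rho:C=\End_A(U)\to B'=\End_A(R)$. Surjectivity of $\rho$ uses $\Ext^1_A(U,t_{\mathcal V}U)=0$, which holds because $t_{\mathcal V}U\in\mathcal V\subseteq\Fac U$. The isomorphism \eqref{eq:restriction} holds because maps $U\to X$ with $X\in\mathcal V^\perp$ kill $t_{\mathcal V}U$. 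Then, with $G=\Hom_A(U,-)$, we get $G(\HH)=\YY\cap G(\mathcal V)^\perp$, using full faithfulness of $G$ on $\Fac U$ together with $\mathcal V\subseteq\Fac U$. Here $G(\mathcal V)^\perp$ is submodule-closed and $\YY=\Sub_C DU$ is submodule-closed. The lifting of submodules along $\rho$ and the Morita step then go through unchanged. Nowhere does this use functorial finiteness of $\mathcal V$; only the torsion radical is needed.
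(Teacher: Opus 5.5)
Your proposal is correct and follows the paper's proof. Parts (e) and (f) are exactly its argument: apply Proposition~\ref{prop:Yoneda}, then obtain submodule closure from $G(\HH_{\mathcal V,U})=\YY\cap G(\mathcal V)^\perp$, restriction along the surjection $\End_A(U)\twoheadrightarrow\End_A(R)$, and Morita equivalence.

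The only difference is that you verify the Ext-progenerator property by hand in (a)--(d) instead of citing \cite[Lemma~4.17]{EnomotoSakai}, and that verification is sound. The point you flag in (d) holds for any right $\add U$-approximation $U^n\to X$, minimal or not. The approximation property and $\Ext^1_{\bar A}(U,U)=0$ give $\Ext^1_{\bar A}(U,K')=0$. Since $U$ is tilting over $\bar A=A/\operatorname{ann}_AU$, one has $\Fac U=\{Z\in\operatorname{mod}\bar A\mid\Ext^1_{\bar A}(U,Z)=0\}$, so $K'\in\Fac U$.
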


\begin{proof}
The Ext-progenerator assertion is
\cite[Lemma~4.17]{EnomotoSakai}. Proposition~\ref{prop:Yoneda}
therefore applies; it remains to establish submodule closure.
Set $C=\End_A(U)$ and $G=\Hom_A(U,-)$. As in the proof of
Theorem~\ref{thm:submodule-image}, $G$ identifies $\Fac U$ with
$\YY=\Sub_C DU$. Since $\mathcal V\subseteq\Fac U$, full
faithfulness gives
\[
 G(\HH_{\mathcal V,U})=\YY\cap G(\mathcal V)^\perp.
\]
This is an intersection of submodule-closed subcategories.
The torsion submodule $t_{\mathcal V}(U)$ is invariant under every
endomorphism of $U$. Moreover,
$\Ext_A^1(U,t_{\mathcal V}(U))=0$, so quotienting induces a
surjection $C\twoheadrightarrow\End_A(R)$. Every map from $U$ to
an object of $\HH_{\mathcal V,U}$ vanishes on $t_{\mathcal V}(U)$.
Thus restriction along this surjection identifies
$\Hom_A(R,-)$ with $G$ on the heart. Submodule closure follows by
restriction and then Morita equivalence, exactly as in
Theorem~\ref{thm:submodule-image}.
\end{proof}

\begin{remark}\label{rem:general-lower-scope}
The comparison results of Section~\ref{sec:comparison}, the
stabilization criterion, the syzygy-finiteness transfer, and the
Frobenius result use only the realization in
Theorem~\ref{thm:arbitrary-lower}. They consequently hold for
$\HH_{\mathcal V,U}$ with the same definitions and proofs. We retain
$[T,U]$ in their statements for readability. The canonical wide
realization used for ICE hearts in Section~\ref{sec:finiteness}
continues to be stated with two functorially finite endpoints.
\end{remark}

\section{Relative delooping levels and comparison theorems}\label{sec:comparison}

In this section, we show the main theorems on relative delooping level over intervals and give the comparison theorem.

\subsection{Objectwise and global levels}

We specialize the ordinary exact-category definition
\cite[Definition~2.4]{GaoLiuZhang} to the heart of an interval with
support $\tau$-tilting endpoints. The subsequent comparison uses the
submodule-closed realization proved in
Theorem~\ref{thm:submodule-image}.

\begin{definition}\label{def:object-dell}
Let $T,U\in\stau A$ with $T\leq U$, let $X\in\HH_{T,U}$, and let
$k\geq1$.
\begin{enumerate}[label=\textnormal{(\arabic*)},leftmargin=2.2em]
\item The \emph{$k$-delooping level of $X$ relative to the interval}
      $[T,U]$, denoted
      \[
        k\text{-}\dell_{[T,U]}X,
      \]
      is the infimum of the integers $d\geq0$ for which there is an
      object $Y\in\HH_{T,U}$ such that $\Omega_{T,U}^{d}X$ is a stable
      retract of $\Omega_{T,U}^{d+k}Y$ in
      $\underline{\HH}_{T,U}$.  The infimum of the empty set is
      $\infty$.
\item The \emph{global $k$-delooping level of the interval heart} is
      \[
      \operatorname{glo}\text{-}k\text{-}\dell[T,U]
      =\sup_{X\in\HH_{T,U}} k\text{-}\dell_{[T,U]}X.
      \]
\end{enumerate}
For $k=1$ we omit $k$ from the terminology and notation whenever no
confusion is possible.
\end{definition}

For later comparison, if $\Lambda$ is a finite-dimensional algebra, we
write
\[
 \operatorname{glo}\text{-}k\text{-}\dell\Lambda
 =\sup_{M\in\operatorname{mod}\Lambda}k\text{-}\dell_{\Lambda}M
\]
for its classical global $k$-delooping level. Here the classical objectwise
level is Definition~\ref{def:object-dell} applied to
$\operatorname{mod}\Lambda$ and its ordinary projectives, whereas
\[
 k\text{-}\dell\Lambda
 =\sup\{k\text{-}\dell_\Lambda L\mid L\text{ is simple in }
          \operatorname{mod}\Lambda\}.
\]
These two suprema are not identified in this paper.

\begin{remark}[Boundary cases]\label{rem:boundary}
If $T=0$, then $\HH_{0,U}=\Fac U$ and $P_{0,U}=U$. Thus
Definition~\ref{def:object-dell} recovers the objectwise and global
relative levels of Xu--Zhang
\cite[Definition~4.4(1),(3)]{XuZhang}. At the other extreme, if $T=U$,
then $\HH_{T,U}=0$ and $P_{T,U}=0$, so the construction is vacuous.  If
$U=A$, then
\[
 \HH_{T,A}=T^{\perp}
 \qquad\text{and}\qquad
 P_{T,A}=A/\tr_T(A).
\]
Thus a nonzero lower endpoint can change the exact category and its
relative projectives. The construction is not merely a relabeling of
the case $T=0$.
\end{remark}

\begin{lemma}[Monotonicity in the loop length]\label{lem:k-monotonicity}
For every $X\in\HH_{T,U}$ and $k\geq1$,
\[
  k\text{-}\dell_{[T,U]}X
  \leq
  (k+1)\text{-}\dell_{[T,U]}X.
\]
Consequently,
\[
 \operatorname{glo}\text{-}k\text{-}\dell[T,U]
 \leq
 \operatorname{glo}\text{-}(k+1)\text{-}\dell[T,U].
\]
\end{lemma}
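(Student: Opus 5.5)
To prove the monotonicity, suppose $(k+1)\text{-}\dell_{[T,U]}X=d<\infty$; otherwise there is nothing to show. By Definition~\ref{def:object-dell}, there is an object $Y\in\HH_{T,U}$ such that $\Omega_{T,U}^{d}X$ is a stable retract of $\Omega_{T,U}^{d+k+1}Y$ in $\underline{\HH}_{T,U}$. The plan is to exhibit a single object $Y'\in\HH_{T,U}$ with $\Omega_{T,U}^{d+k}Y'\cong\Omega_{T,U}^{d+k+1}Y$ in the stable category. The natural choice is $Y'=\Omega_{T,U}Y$. This is an object of the heart, because the kernel of a projective deflation in the exact category $\HH_{T,U}$ lies in $\HH_{T,U}$ (Theorem~\ref{thm:ES-progenerator}, or Theorem~\ref{thm:arbitrary-lower} for arbitrary lower endpoints).

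The key step is the identification $\Omega_{T,U}^{d+k}(\Omega_{T,U}Y)\cong\Omega_{T,U}^{d+k+1}Y$. If syzygies are computed with fixed minimal projective deflations, this is simply an equality of the iterated construction. In general it holds up to isomorphism in $\underline{\HH}_{T,U}$, since $\Omega_{T,U}$ is a well-defined additive endofunctor of the stable category by the Schanuel argument recalled in Section~\ref{sec:preliminaries}. Stable retracts are invariant under stable isomorphism: compose the maps $a,b$ with the isomorphism and its inverse, and note that the relation ``$ba-1$ factors through $\add P$'' is preserved. Hence $\Omega_{T,U}^{d}X$ is a stable retract of $\Omega_{T,U}^{d+k}Y'$, so $d$ lies in the set whose infimum defines $k\text{-}\dell_{[T,U]}X$. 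This gives $k\text{-}\dell_{[T,U]}X\le d$.

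The global statement then follows by taking suprema over all $X\in\HH_{T,U}$ on both sides of the objectwise inequality, with the usual conventions for $\infty$ and for empty suprema. The only point requiring care is the well-definedness of $\Omega_{T,U}$ on stable classes. Since this is already recorded in the paper, I do not expect a genuine obstacle.
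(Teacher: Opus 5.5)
Your proposal is correct and follows essentially the same argument as the paper: take $Y'=\Omega_{T,U}Y$, observe that $\Omega_{T,U}^{d+k}(\Omega_{T,U}Y)=\Omega_{T,U}^{d+k+1}Y$, so the same $d$ is admissible for the $k$-level, and then take suprema. Your extra remarks on well-definedness of $\Omega_{T,U}$ in the stable category and on invariance of stable retracts under stable isomorphism are harmless elaborations of details the paper leaves implicit.
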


\begin{proof}
If $\Omega_{T,U}^{d}X$ is a stable retract of
$\Omega_{T,U}^{d+k+1}Y$, then it is a stable retract of
\[
 \Omega_{T,U}^{d+k}(\Omega_{T,U}Y).
\]
The same $d$ is therefore admissible for the $k$-delooping level. Taking
suprema gives the second assertion.
\end{proof}

\begin{lemma}[Global levels and stabilization]\label{lem:global-stabilization}
Let $\EE$ be an exact category as in Proposition~\ref{prop:Yoneda},
with projectives $\add P$. For $r\geq0$, put
\[
 \CC_r(\EE)=\add\{\Omega_{\EE}^rX\oplus P'
       \mid X\in\EE,\ P'\in\add P\}.
\]
Define its objectwise and global levels as in
Definition~\ref{def:object-dell}. For every $k\geq1$,
\begin{equation}\label{eq:global-stabilization}
 \operatorname{glo}\text{-}k\text{-}\dell\EE
 =\operatorname{glo}\text{-}\dell\EE
 =\inf\{r\geq0\mid\CC_r(\EE)=\CC_{r+1}(\EE)\}.
\end{equation}
The infimum is $\infty$ if the strata never stabilize.
In particular, this applies both to interval hearts and to module
categories of finite-dimensional algebras.
\end{lemma}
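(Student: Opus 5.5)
The plan is to reduce everything to the descending chain of strata $\CC_0(\EE)\supseteq\CC_1(\EE)\supseteq\cdots$. Once objectwise delooping levels are expressed through these strata, both inequalities in \eqref{eq:global-stabilization} become short. Put $s=\inf\{r\geq0\mid\CC_r(\EE)=\CC_{r+1}(\EE)\}$. I will show $\operatorname{glo}\text{-}k\text{-}\dell\EE=s$ for every $k\geq1$; taking $k=1$ then gives the middle term as well.

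\emph{Step 1: three structural facts about the strata.}
\begin{enumerate}[label=\textnormal{(\alph*)},leftmargin=2.2em]
\item The chain is descending: $\CC_{r+1}(\EE)\subseteq\CC_r(\EE)$, because $\Omega_{\EE}^{r+1}X=\Omega_{\EE}^r(\Omega_{\EE}X)$.
\item $\CC_{r+1}(\EE)=\add\bigl(\{\Omega_{\EE}Z\mid Z\in\CC_r(\EE)\}\cup\{P\}\bigr)$. Since $\Omega_{\EE}$ is an additive endofunctor of $\underline{\EE}$ and $\EE$ is Krull--Schmidt, a direct summand $Z$ of $\Omega^r_{\EE}X\oplus P'$ has $\Omega_{\EE}Z$ a direct summand of $\Omega_{\EE}^{r+1}X\oplus P''$ for some $P''\in\add P$. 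This uses the description of stable retracts recalled before Proposition~\ref{prop:Yoneda}.
\item As a consequence of (b), $\Omega_{\EE}^j\CC_r(\EE)\subseteq\CC_{r+j}(\EE)$ up to projective summands. Moreover, $\CC_r=\CC_{r+1}$ forces $\CC_{r+1}=\CC_{r+2}$, and by induction $\CC_r=\CC_{r+j}$ for all $j\geq0$.
\end{enumerate}

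\emph{Step 2: translate the objectwise level.} By the stable-retract characterization, $\Omega^d_{\EE}X$ is a stable retract of $\Omega^{d+k}_{\EE}Y$ for some $Y\in\EE$ if and only if $\Omega^d_{\EE}X\in\CC_{d+k}(\EE)$. For the direction ``if'', a summand of a finite sum $\bigoplus_i\Omega^{d+k}Y_i\oplus P'$ is a summand of $\Omega^{d+k}(\bigoplus_iY_i)\oplus P''$, using additivity of $\Omega$ on $\underline{\EE}$ and the fact that minimal syzygies of direct sums agree up to projectives. Hence
\[
k\text{-}\dell X=\inf\{d\geq0\mid\Omega^d_{\EE}X\in\CC_{d+k}(\EE)\}.
\]

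\emph{Step 3: the two inequalities.} For the upper bound, assume $s<\infty$. For every $X$ we have $\Omega^s_{\EE}X\in\CC_s=\CC_{s+k}$ by Step 1(c), so $k\text{-}\dell X\leq s$.

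For the lower bound, assume $g=\operatorname{glo}\text{-}k\text{-}\dell\EE<\infty$. Let $X\in\EE$ and choose $d\leq g$ with $\Omega^d_{\EE}X\in\CC_{d+k}$. Applying $\Omega^{g-d}_{\EE}$ and Step 1(c) gives $\Omega^g_{\EE}X\in\CC_{g+k}\subseteq\CC_{g+1}$. Every generator of $\CC_g$ is of the form $\Omega^g_{\EE}X\oplus P'$, so $\CC_g\subseteq\CC_{g+1}$. Together with (a) this gives $\CC_g=\CC_{g+1}$, hence $s\leq g$. If either side is infinite, the corresponding implication shows that the other side is infinite as well. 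Applying all of this to $\EE=\HH_{T,U}$ and to $\modB$ yields the final sentence of the lemma.

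The main obstacle is the bookkeeping in Steps 1(b) and 2. Working with minimal syzygies inside $\EE$, I must make sure that ``summand up to $\add P$'' is stable under $\Omega_{\EE}$ and under finite direct sums. I would handle this entirely in $\underline{\EE}$ with Krull--Schmidt cancellation, invoking the Schanuel argument already cited for the functoriality of $\Omega_{\EE}$.
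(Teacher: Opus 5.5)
Your proposal is correct and follows essentially the same route as the paper. Both arguments use the recursion $\CC_{r+1}(\EE)=\add\bigl(\Omega_{\EE}(\CC_r(\EE))\cup\add P\bigr)$, the propagation of a single equality of strata, the raising of witnesses to a common degree, and the merging of finitely many witnesses into one $Y$ by additivity. The only difference is cosmetic: you get the lower bound for each $k$ directly from $\CC_{g+k}(\EE)\subseteq\CC_{g+1}(\EE)$, whereas the paper proves it for $k=1$ and then appeals to Lemma~\ref{lem:k-monotonicity}.
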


\begin{proof}
The strata descend and satisfy
\[
 \CC_{r+1}(\EE)
 =\add\bigl(\Omega_{\EE}(\CC_r(\EE))\cup\add P\bigr).
\]
Suppose first that $\operatorname{glo}\text{-}\dell\EE\leq n$.
For every $X\in\EE$, a delooping witness in degree $d\leq n$ can be
raised to degree $n$ by applying $\Omega_{\EE}^{n-d}$. Thus
$\Omega_{\EE}^nX$ is a stable retract of some
$\Omega_{\EE}^{n+1}Y$, which implies
$\CC_n(\EE)\subseteq\CC_{n+1}(\EE)$. The reverse inclusion always
holds, so the strata are equal.

Conversely, equality at index $n$ propagates by the displayed identity
to $\CC_n(\EE)=\CC_{n+k}(\EE)$ for every $k\geq1$.
Consequently, $\Omega_{\EE}^nX$ is a direct summand of
$\Omega_{\EE}^{n+k}Y\oplus P'$ for some $Y\in\EE$ and
$P'\in\add P$. Here finite sums of possible witnesses can be
combined into one $Y$ by additivity. This proves
$\operatorname{glo}\text{-}k\text{-}\dell\EE\leq n$ for every $k$.
Finally, the objectwise monotonicity of
Lemma~\ref{lem:k-monotonicity} applies in $\EE$. Hence all the
global levels equal the first stabilization index, including when that
index is infinite.
\end{proof}

\begin{remark}
Lemma~\ref{lem:global-stabilization} concerns a supremum over all
objects. It does not identify the objectwise levels for different
$k$, nor the invariants obtained by testing only simple modules.
We write $g=\operatorname{glo}\text{-}\dell[T,U]$ for the common
global relative value whenever the interval is fixed.
\end{remark}

\begin{proposition}[Objectwise comparison]\label{prop:object-comparison}
Let $P=P_{T,U}$, $B=\End_A(P)$ and
$F=\Hom_A(P,-)$. For every $X\in\HH_{T,U}$ and every $k\geq1$,
\[
  k\text{-}\dell_B F(X)
  \leq
  k\text{-}\dell_{[T,U]}X.
\]
\end{proposition}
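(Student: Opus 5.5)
The plan is to transport a relative delooping witness through $F$, using Proposition~\ref{prop:Yoneda}. If $k\text{-}\dell_{[T,U]}X=\infty$ there is nothing to prove. Otherwise let $d=k\text{-}\dell_{[T,U]}X$, and choose $Y\in\HH_{T,U}$ such that $\Omega_{T,U}^{d}X$ is a stable retract of $\Omega_{T,U}^{d+k}Y$ in $\underline{\HH}_{T,U}$. By the remark following the definition of stable retracts, this is equivalent to a split monomorphism
\[
\Omega_{T,U}^{d}X\longrightarrow\Omega_{T,U}^{d+k}Y\oplus P'
\]
in $\HH_{T,U}$, for some $P'\in\add P$.

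Next I apply the additive functor $F$, which preserves split monomorphisms, so $F(\Omega_{T,U}^{d}X)$ is a direct summand of $F(\Omega_{T,U}^{d+k}Y)\oplus F(P')$. Fix minimal projective deflations in the heart. Proposition~\ref{prop:Yoneda}(3) gives $F(\Omega_{T,U}^{r}Z)\cong\Omega_B^{r}F(Z)$ for all $r$ and all $Z$. Proposition~\ref{prop:Yoneda}(2) shows that $F(P')$ is a projective $B$-module. Hence $\Omega_B^{d}F(X)$ is a direct summand of $\Omega_B^{d+k}F(Y)\oplus Q$ with $Q$ projective. That is, $\Omega_B^{d}F(X)$ is a stable retract of $\Omega_B^{d+k}F(Y)$ in $\underline{\modB}$, with witness $F(Y)\in\modB$.

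It follows that $d$ is admissible in the classical definition of $k\text{-}\dell_BF(X)$, so $k\text{-}\dell_BF(X)\leq d$. Alternatively, one can work entirely in stable categories using Proposition~\ref{prop:Yoneda}(4), which says the induced stable functor is fully faithful and commutes with $\Omega$ up to canonical isomorphism. Then the maps $a,b$ with $ba-1$ factoring through $\add P$ are sent to $F(a),F(b)$ with $F(b)F(a)-1$ factoring through a projective.

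I expect no real obstacle. The only point needing care is the choice of syzygies: the classical syzygies $\Omega_B$ must agree with $F$ applied to the interval syzygies, at least up to projective summands. This is exactly part (3) of Proposition~\ref{prop:Yoneda}, and the stable-category formulation makes the argument independent of choices. Submodule closure (Theorem~\ref{thm:submodule-image}) is not needed for this direction. It will only matter for the reverse inequality with the extra loop.
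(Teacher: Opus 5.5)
Your proposal is correct and follows the paper's proof. You take a witness $Y$ in the heart, push the stable retract $\Omega_{T,U}^{d}X\mid\Omega_{T,U}^{d+k}Y$ through $F$ via Proposition~\ref{prop:Yoneda}, and note that $F(Y)\in\modB$ is an admissible classical witness; your split-monomorphism formulation only spells out what the paper does in one line, and you are right that submodule closure is not needed for this inequality.
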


\begin{proof}
There is nothing to prove if the right-hand side is infinite. Otherwise,
let $d=k\text{-}\dell_{[T,U]}X$. There is $Y\in\HH_{T,U}$ such that
$\Omega_{T,U}^{d}X$ is a stable retract of
$\Omega_{T,U}^{d+k}Y$. Proposition~\ref{prop:Yoneda} sends this stable
retract to
\[
 \Omega_B^dF(X)\mid\Omega_B^{d+k}F(Y)
 \quad\text{in }\underline{\modB}.
\]
The classical definition allows the delooping object to range over all
of $\modB$, so the assertion follows.
\end{proof}

The proof of Proposition~\ref{prop:object-comparison} supplies only one
inequality: a classical delooping witness for $F(X)$ need not belong
to the essential image of $F$. More importantly, the proposition alone
does not bound $k\text{-}\dell B$, because some simple $B$-modules may
not lie in $F(\HH_{T,U})$. Theorem~\ref{thm:submodule-image} provides the radical lifts needed
to treat the missing simple modules.

\begin{proposition}[Comparison with one extra loop]\label{prop:sandwich}
For $X\in\HH_{T,U}$ and $k\geq1$,
\[
 k\text{-}\dell_BF(X)
 \leq k\text{-}\dell_{[T,U]}X
 \leq (k+1)\text{-}\dell_BF(X).
\]
\end{proposition}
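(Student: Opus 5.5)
The left inequality is Proposition~\ref{prop:object-comparison}, so only the right inequality needs an argument. Set $e=(k+1)\text{-}\dell_BF(X)$; there is nothing to prove if $e=\infty$. Otherwise there is a $B$-module $M$ such that $\Omega_B^{e}F(X)$ is a stable retract of $\Omega_B^{e+k+1}M$ in $\underline{\modB}$. The plan is to absorb one of the $k+1$ loops on the $B$-side into an object of the heart. By Theorem~\ref{thm:submodule-image}, together with Theorem~\ref{thm:arbitrary-lower} in the general case, the essential image of $F$ is closed under submodules and contains all projectives. In particular $\Omega_BM$, a submodule of a projective $B$-module, is isomorphic to $F(Y)$ for some $Y\in\HH_{T,U}$.

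Then Proposition~\ref{prop:Yoneda}(3) gives
\[
\Omega_B^{e+k+1}M\cong\Omega_B^{e+k}F(Y)\cong F(\Omega_{T,U}^{e+k}Y)
\]
up to projective summands, and likewise $\Omega_B^eF(X)\cong F(\Omega_{T,U}^eX)$. So in $\underline{\modB}$ the object $F(\Omega_{T,U}^eX)$ is a stable retract of $F(\Omega_{T,U}^{e+k}Y)$.

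The remaining step is to pull this stable retract back to $\underline{\HH}_{T,U}$. This is the point needing care. I would use Proposition~\ref{prop:Yoneda}(4): the induced functor $\underline{\HH}_{T,U}\to\underline{\modB}$ is fully faithful. The maps $a,b$ in $\underline{\modB}$ with $ba=1$ therefore lift to maps $\bar a,\bar b$ in $\underline{\HH}_{T,U}$, and $\bar b\bar a=1$ holds there by faithfulness. Alternatively, one can argue with the direct-summand formulation: $F(\Omega^e_{T,U}X)$ is a summand of $F(\Omega^{e+k}_{T,U}Y)\oplus Q$ with $Q$ projective. Writing $Q=F(P')$ with $P'\in\add P$ by Proposition~\ref{prop:Yoneda}(2), full faithfulness and idempotent-splitting in the Krull--Schmidt category $\HH_{T,U}$ (which is summand-closed) give the summand relation in $\HH_{T,U}$.

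Either route shows that $d=e$ is admissible in Definition~\ref{def:object-dell} with the witness $Y$. Hence $k\text{-}\dell_{[T,U]}X\leq e$. The only subtle points are the independence of syzygy choices, handled by the canonical stable version of Proposition~\ref{prop:Yoneda}(3), and the existence of $Y$, which is exactly where submodule closure enters.
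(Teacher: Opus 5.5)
Your proposal is correct and matches the paper's proof. The left inequality is Proposition~\ref{prop:object-comparison}. For the right inequality you use submodule closure to write $\Omega_BM\cong F(Y)$, identify $\Omega_B^{e+k+1}M$ with $F(\Omega_{T,U}^{e+k}Y)$, and pull the stable retract back along the fully faithful stable functor of Proposition~\ref{prop:Yoneda}(4), exactly as the paper does.
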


\begin{proof}
Only the second inequality remains. By
Theorem~\ref{thm:submodule-image}, $\Omega_B M$ is in the image of $F$
for every $M\in\modB$, since it is a submodule of a projective.
If $d=(k+1)\text{-}\dell_BF(X)<\infty$, choose $M$ with
\[
 \Omega_B^dF(X)\mid\Omega_B^{d+k+1}M
\]
in the stable category, and choose $Y\in\HH_{T,U}$ with
$F(Y)\cong\Omega_BM$. The right-hand side is
$F(\Omega_{T,U}^{d+k}Y)$. By Proposition~\ref{prop:Yoneda}(4), $F$ is fully faithful on
projectively stable categories. We can thus pull back the stable
retract to obtain
$\Omega_{T,U}^dX\mid\Omega_{T,U}^{d+k}Y$.
This proves the assertion; the case $d=\infty$ is immediate.
\end{proof}

For $T=0$, Proposition~\ref{prop:sandwich} is
\cite[Proposition~3.3]{XuZhangHigher}. The same proof works here
because Theorem~\ref{thm:submodule-image} puts every first
$B$-syzygy in the image for arbitrary lower endpoints.

\begin{theorem}[Sharp global comparison]\label{thm:global-sharp}
Let $T,U\in\stau A$ with $T\leq U$, and put
$B=\End_A(P_{T,U})$ and $g=\operatorname{glo}\text{-}\dell[T,U]$.
Then, in the extended nonnegative integers,
\begin{equation}\label{eq:global-sharp}
 g\leq\operatorname{glo}\text{-}\dell B\leq g+1.
\end{equation}
In particular, these two global levels are finite simultaneously;
when finite, they are equal or differ by one. The additive constant
one is sharp even for connected hereditary $A$ and nonzero $T$.
The same assertions hold for the hearts of
Theorem~\ref{thm:arbitrary-lower}.
\end{theorem}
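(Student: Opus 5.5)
We will prove the two inequalities separately, using the stabilization description of global levels from Lemma~\ref{lem:global-stabilization} on both sides. By that lemma, $g$ is the first index $r$ with $\CC_r(\HH_{T,U})=\CC_{r+1}(\HH_{T,U})$. Likewise, $\operatorname{glo}\text{-}\dell B$ is the first index $r$ with $\CC_r(\modB)=\CC_{r+1}(\modB)$. The plan is to compare these two descending chains of strata through $F$. The key tool is Proposition~\ref{prop:Yoneda}(3): $F$ sends $\Omega_{T,U}^r$ to $\Omega_B^r$ and $\add P$ onto $\proj B$, so $F(\CC_r(\HH_{T,U}))\subseteq\CC_r(\modB)$ up to isomorphism and summands.

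\emph{Upper bound.} Since $\Omega_BM$ is a submodule of a projective, Theorem~\ref{thm:submodule-image} (respectively Theorem~\ref{thm:arbitrary-lower}) gives $\Omega_BM\cong F(Y)$ for some $Y$ in the heart. Hence $\Omega_B^{r+1}M\cong F(\Omega_{T,U}^rY)$ up to projective summands. Conversely, every $F(\Omega^r_{T,U}X)=\Omega_B^rF(X)$ lies in $\CC_r(\modB)$. Therefore
\[
\CC_{r+1}(\modB)=\add F(\CC_r(\HH_{T,U}))\quad(r\geq0).
\]
The containment $\supseteq$ follows because $\Omega_B^rF(X)$ is itself an $(r+1)$st syzygy: $F(X)\cong\Omega_B(F(X)')$ is a first syzygy only if $F(X)$ embeds in a projective. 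Since that need not hold, we use the cleaner chain
\[
F(\CC_{r+1}(\HH))\subseteq\CC_{r+1}(\modB)\subseteq\add F(\CC_r(\HH)).
\]
If $\CC_g(\HH)=\CC_{g+1}(\HH)$, then $\CC_{g+2}(\modB)\subseteq \add F(\CC_{g+1}(\HH))=\add F(\CC_{g+2}(\HH))\subseteq\CC_{g+2}(\modB)$. The same computation at index $g+1$ gives $\CC_{g+1}(\modB)=\add F(\CC_g(\HH))=\add F(\CC_{g+1}(\HH))\subseteq\CC_{g+2}(\modB)$. Hence the $B$-strata stabilize at $g+1$, and $\operatorname{glo}\text{-}\dell B\leq g+1$. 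Alternatively, the upper bound follows objectwise. For $M\in\modB$, write $\Omega_BM=F(Y)$. Then $\Omega^{g}_{T,U}Y$ is a stable retract of some $\Omega_{T,U}^{g+1}Y'$, and applying $F$ shows $\Omega_B^{g+1}M\mid\Omega_B^{g+2}F(Y')$ stably.

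\emph{Lower bound.} Suppose $n=\operatorname{glo}\text{-}\dell B<\infty$, and let $X$ be in the heart. By Lemma~\ref{lem:global-stabilization} for $\modB$ with $k=2$, $\Omega_B^nF(X)$ is a stable retract of $\Omega_B^{n+2}M$ for some $M$. As above, write $\Omega_BM\cong F(Y)$, so that $\Omega_B^{n+2}M\cong F(\Omega^{n+1}_{T,U}Y)$ stably. Full faithfulness on stable categories, Proposition~\ref{prop:Yoneda}(4), then pulls the retract back to $\Omega^n_{T,U}X\mid\Omega^{n+1}_{T,U}Y$. Hence $1\text{-}\dell_{[T,U]}X\leq n$ for all $X$, and $g\leq n$. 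This is exactly Proposition~\ref{prop:sandwich} with the $k$-independence of global levels, so in the write-up we will simply cite those. Finiteness is simultaneous because the two bounds differ by one.

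\emph{Sharpness.} This is the expected obstacle, since it needs an explicit example. We plan to use Example~\ref{ex:nonice}: a connected hereditary algebra with nonzero lower endpoint for which all relative levels vanish, so $g=0$, while $\findim B^{\op}=1$. Because $\findim B^{\op}\leq \dell B\leq\operatorname{glo}\text{-}\dell B$ by G\'elinas, this forces $\operatorname{glo}\text{-}\dell B=1=g+1$. The delicate part is verifying $g=0$ there, namely that every heart object is a stable summand of a relative first syzygy. We would check this directly from the finitely many indecomposables in the heart. Finally, the statement for the hearts $\HH_{\mathcal V,U}$ follows verbatim by Remark~\ref{rem:general-lower-scope}, because only submodule closure and Proposition~\ref{prop:Yoneda} were used.
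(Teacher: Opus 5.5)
Your argument is correct. The lower bound is the paper's argument: Proposition~\ref{prop:sandwich} with $k=1$, together with $\operatorname{glo}\text{-}2\text{-}\dell B=\operatorname{glo}\text{-}\dell B$. Sharpness uses the same Example~\ref{ex:nonice}. There $g=0$ is immediate rather than delicate, because the heart is $\proj A=\add P_{T,A}$, so $\underline{\HH}_{T,A}=0$. Deducing $\operatorname{glo}\text{-}\dell B=1$ from G\'elinas plus your upper bound is a fine substitute for the paper's remark that $B$ is hereditary with a nonprojective simple.

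For the upper bound you take a genuinely different route.
\begin{itemize}
\item The paper argues objectwise. It writes $\Omega_BM\cong F(X)$ and pushes a $2$-delooping witness $\Omega^g_{T,U}X\mid\Omega^{g+2}_{T,U}Y$ through $F$, obtaining $\Omega_B^{g+1}M\mid\Omega_B^{g+2}F(Y)$.
\item You compare stabilization indices via the sandwich $F(\CC_{r+1}(\HH))\subseteq\CC_{r+1}(\modB)\subseteq\add F(\CC_r(\HH))$. These are the inclusions of Proposition~\ref{prop:syzygy-transfer}. You then deduce $\CC_{g+1}(\modB)=\CC_{g+2}(\modB)$ from $\CC_g(\HH)=\CC_{g+1}(\HH)=\CC_{g+2}(\HH)$.
\end{itemize}
Your route treats both inequalities uniformly as statements about strata. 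The lower bound also follows this way, by reflecting $F(\CC_n(\HH))\subseteq\CC_{n+2}(\modB)\subseteq\add F(\CC_{n+1}(\HH))$ along the fully faithful $F$. It also ties the theorem directly to the syzygy-finiteness transfer. The paper's route is shorter, and read objectwise it gives the bound $\dell_BM\le 2\text{-}\dell_{[T,U]}X+1$ whenever $\Omega_BM\cong F(X)$.

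Two things need fixing in the write-up.
\begin{itemize}
\item Your ``Alternatively'' sentence has an index slip. From a $1$-witness $\Omega^g_{T,U}Y\mid\Omega^{g+1}_{T,U}Y'$, applying $F$ gives $\Omega_B^{g+1}M\mid\Omega_B^{g+1}F(Y')$, not $\Omega_B^{g+2}F(Y')$. Since $F(Y')$ need not be a syzygy, this does not bound $\dell_BM$. You need the $2$-witness (available by Lemma~\ref{lem:global-stabilization}), exactly as in the paper. In your strata argument, this shift is what $\CC_{g+1}(\HH)=\CC_{g+2}(\HH)$ supplies.
\item Delete the displayed equality $\CC_{r+1}(\modB)=\add F(\CC_r(\HH))$. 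It is false in general: for a wide heart it would force all $B$-strata to coincide, contradicting Example~\ref{ex:wide-positive}. You do retract it, but it should not appear in the final proof.
\end{itemize}
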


\begin{proof}
For every $X\in\HH_{T,U}$, Proposition~\ref{prop:sandwich} gives
\[
 \dell_{[T,U]}X\leq2\text{-}\dell_BF(X)
 \leq\operatorname{glo}\text{-}2\text{-}\dell B
 =\operatorname{glo}\text{-}\dell B,
\]
where the equality is Lemma~\ref{lem:global-stabilization}.
Taking the supremum over $X$ proves the first inequality.

For the second, assume $g<\infty$, take $M\in\modB$, and choose
$X\in\HH_{T,U}$ with $F(X)\cong\Omega_BM$.
Such an $X$ exists by Theorem~\ref{thm:submodule-image}.
Lemma~\ref{lem:global-stabilization} gives
$2\text{-}\dell_{[T,U]}X\leq g$. Raising a witness to degree $g$
if necessary, choose $Y$ such that
$\Omega_{T,U}^gX$ is a stable retract of
$\Omega_{T,U}^{g+2}Y$.  Applying $F$ gives
\[
 \Omega_B^{g+1}M\mid\Omega_B^{g+2}F(Y)
 \quad\text{in }\underline{\modB}.
\]
Thus $\dell_BM\leq g+1$, proving the second inequality.
Example~\ref{ex:nonice} has $g=0$ and
$\operatorname{glo}\text{-}\dell B=1$, proving sharpness.
The proof uses only the realization of the heart, so
Theorem~\ref{thm:arbitrary-lower} gives the last assertion.
\end{proof}

\subsection{A finite-test invariant}

Fix a decomposition
\[
 P=P_1\oplus\cdots\oplus P_n
\]
into pairwise nonisomorphic indecomposable summands.  Put
\[
 Q_i=F(P_i),\qquad L_i=\topm_BQ_i.
\]
The modules $Q_1,\dots,Q_n$ are the indecomposable projective right
$B$-modules, and $L_1,\dots,L_n$ form a complete list of simple right
$B$-modules.

Let
\[
 I=I(T,U)=\{i\mid L_i\in F(\HH_{T,U})\}.
\]
For $i\in I$, full faithfulness gives an object $S_i\in\HH_{T,U}$,
unique up to isomorphism, such that $F(S_i)\cong L_i$.

By Theorem~\ref{thm:submodule-image}, for each $j\notin I$ there is a
unique object $N_j\in\HH_{T,U}$, up to isomorphism, with
$F(N_j)\cong\rad_B Q_j$.

\begin{definition}\label{def:XZ-level}
For $k\geq1$, define the \emph{Xu--Zhang $k$-delooping level of the
interval} by
\begin{equation}\label{eq:XZ-level}
\begin{aligned}
 k\text{-}\dell^{\mathrm{XZ}}[T,U]
 =\max\Big\{&
 \sup_{i\in I} k\text{-}\dell_{[T,U]}S_i,\\
 &\sup_{j\notin I}
 \big((k+1)\text{-}\dell_{[T,U]}N_j+1\big)
 \Big\}.
\end{aligned}
\end{equation}
\end{definition}

For $T=0$, the objects $S_i$ and $N_j$ are the simple-image and
radical-correction objects of \cite[Definition~4.4(2)]{XuZhang}.
Thus the definition recovers their finite-test invariant, as well as
extending their objectwise and global invariants.

The shift from $k$ to $k+1$ in the second line accounts for the exact
sequence $0\to\rad Q_j\to Q_j\to L_j\to0$.
Example~\ref{ex:correction} shows that the radical correction term is
necessary. The separate issue of the loop-length shift is addressed
by the gap examples of \cite{XuZhangHigher}.

\begin{theorem}[Interval comparison theorem]\label{thm:main-comparison}
Let $T,U\in\stau A$ with $T\leq U$.  Put $P=P_{T,U}$ and $B=\End_A(P)$.  Then, for every
$k\geq1$,
\[
   k\text{-}\dell B
   \leq
   k\text{-}\dell^{\mathrm{XZ}}[T,U].
\]
In particular,
\[
   \findim B^{\op}
   \leq
   \dell^{\mathrm{XZ}}[T,U].
\]
\end{theorem}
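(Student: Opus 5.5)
We bound $k\text{-}\dell_B L$ for each simple right $B$-module $L=L_i$, splitting into the two index sets $I$ and its complement. The finitistic dimension bound then follows from G\'elinas' inequality $\findim B^{\op}\leq\dell B$ \cite{Gelinas}, applied with $k=1$. The case $k\text{-}\dell^{\mathrm{XZ}}[T,U]=\infty$ is vacuous, so we assume all relevant relative levels are finite.

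For $i\in I$ we have $L_i\cong F(S_i)$. Proposition~\ref{prop:object-comparison} then gives
\[
 k\text{-}\dell_B L_i = k\text{-}\dell_BF(S_i)\leq k\text{-}\dell_{[T,U]}S_i,
\]
which is bounded by the first term in \eqref{eq:XZ-level}.

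For $j\notin I$, set $e=(k+1)\text{-}\dell_{[T,U]}N_j$. Proposition~\ref{prop:object-comparison} applied to $N_j$ gives
\[
 (k+1)\text{-}\dell_B\rad_BQ_j\leq e .
\]
So there are a $B$-module $M$ and a stable retract
\[
 \Omega_B^{e}\rad Q_j\mid\Omega_B^{e+k+1}M .
\]
Since $Q_j\to L_j$ is a projective cover, $\Omega_BL_j=\rad Q_j$. Hence
\[
 \Omega_B^{e+1}L_j\cong\Omega_B^{e}\rad Q_j
 \quad\text{and}\quad
 \Omega_B^{e+k+1}M=\Omega_B^{(e+1)+k}M .
\]
This says exactly that $d=e+1$ is admissible for $k\text{-}\dell_BL_j$, so
\[
 k\text{-}\dell_B L_j\leq (k+1)\text{-}\dell_{[T,U]}N_j+1 .
\]
This is why the second term in \eqref{eq:XZ-level} carries the shift to $k+1$ and the $+1$. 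One could even get $k\text{-}\dell_B\rad Q_j$ plus one, but $(k+1)$ suffices here.

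Taking the supremum over all simple modules gives $k\text{-}\dell B\leq k\text{-}\dell^{\mathrm{XZ}}[T,U]$.

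The main point needing care is the index shift in the second case. We must confirm that the classical delooping definition permits the witness $M$ to be arbitrary, which it does. We must also confirm that minimal syzygies satisfy $\Omega_B^{e}\Omega_B L_j=\Omega_B^{e+1}L_j$ up to projective summands, which is harmless in $\underline{\modB}$.

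The other ingredient, the existence of $N_j$, is supplied by Theorem~\ref{thm:submodule-image}, and Proposition~\ref{prop:Yoneda}(3) guarantees that $F$ commutes with syzygies. For the finitistic bound, take $k=1$, so that $\dell^{\mathrm{XZ}}$ denotes $1\text{-}\dell^{\mathrm{XZ}}$. G\'elinas' theorem for right simples gives $\findim B^{\op}\leq\dell B$, with right and left conventions as in the introduction.
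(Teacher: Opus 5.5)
Your proof is correct and is essentially the paper's argument. For $j\notin I$ you route through Proposition~\ref{prop:object-comparison} to get a classical witness for $\rad Q_j\cong F(N_j)$, whereas the paper applies $F$ directly to the relative witness $\Omega_{T,U}^{e}N_j\mid\Omega_{T,U}^{e+k+1}Y$. Passing from $(k+1)\text{-}\dell_B\rad Q_j\leq e$ to a witness in degree exactly $e$ uses $\Omega_B$ to raise it, a step you should state. Your route also yields the slightly sharper intermediate bound $k\text{-}\dell_BL_j\leq(k+1)\text{-}\dell_B\rad Q_j+1$.

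Delete the aside that $k\text{-}\dell_B\rad Q_j+1$ would also suffice, because it is false. For $k=1$ the module $\rad Q_j=\Omega_BL_j$ always has delooping level $0$, so the aside would force $\dell_BL_j\leq1$. This fails in Example~\ref{ex:hereditary-sharp}. There the simple $B$-module of projective dimension two lies outside $F(\HH_{T,U})$, by Proposition~\ref{prop:hereditary-bound}, and it has delooping level two. So the shift to $k+1$ is genuinely needed.
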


\begin{proof}
Let $L_i$ be a simple $B$-module as above. If $i\in I$, then
$L_i\cong F(S_i)$ and Proposition~\ref{prop:object-comparison} gives
\[
  k\text{-}\dell_B L_i
  \leq k\text{-}\dell_{[T,U]}S_i.
\]

Now let $j\notin I$. Theorem~\ref{thm:submodule-image} gives $N_j\in\HH_{T,U}$ and an
exact sequence
\[
 0\longrightarrow F(N_j)\longrightarrow Q_j
   \longrightarrow L_j\longrightarrow0.
\]
Since $Q_j\twoheadrightarrow L_j$ is a projective cover,
\begin{equation}\label{eq:first-syzygy-missing}
  \Omega_BL_j\cong F(N_j).
\end{equation}
Suppose that $(k+1)\text{-}\dell_{[T,U]}N_j=d<\infty$. For some
$Y\in\HH_{T,U}$ there is a stable retract
\[
  \Omega_{T,U}^{d}N_j
  \mid
  \Omega_{T,U}^{d+k+1}Y.
\]
Applying $F$, Proposition~\ref{prop:Yoneda} and
\eqref{eq:first-syzygy-missing} yield
\[
  \Omega_B^{d+1}L_j
  \mid
  \Omega_B^{d+k+1}F(Y)
  =\Omega_B^{(d+1)+k}F(Y).
\]
Thus
\[
  k\text{-}\dell_BL_j\leq d+1.
\]
The same inequality is vacuous if $d=\infty$. Taking the supremum over
all simple $B$-modules proves the first assertion. For $k=1$, combine it
with G\'elinas' inequality
$\findim B^{\op}\leq\dell B$ \cite[Proposition~1.3]{Gelinas}.
\end{proof}

\begin{corollary}[Finite-test and finitistic bounds]\label{cor:global-comparison}
Under the hypotheses of Theorem~\ref{thm:main-comparison}, put
$g=\operatorname{glo}\text{-}\dell[T,U]$. For every $k\geq1$,
\[
 k\text{-}\dell B
 \leq k\text{-}\dell^{\mathrm{XZ}}[T,U]\leq g+1.
\]
In particular,
\[
 \findim B^{\op}\leq\dell^{\mathrm{XZ}}[T,U]\leq g+1.
\]
\end{corollary}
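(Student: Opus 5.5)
The first inequality $k\text{-}\dell B\leq k\text{-}\dell^{\mathrm{XZ}}[T,U]$ is exactly Theorem~\ref{thm:main-comparison}. The finitistic statement then follows by taking $k=1$ and invoking G\'elinas' inequality $\findim B^{\op}\leq\dell B$. So the work lies in the middle inequality $k\text{-}\dell^{\mathrm{XZ}}[T,U]\leq g+1$, and I will bound the two kinds of terms in \eqref{eq:XZ-level} separately by the global relative levels. We may assume $g<\infty$, since otherwise there is nothing to prove.

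\emph{Terms with $i\in I$.} By Definition~\ref{def:object-dell}, $k\text{-}\dell_{[T,U]}S_i\leq\operatorname{glo}\text{-}k\text{-}\dell[T,U]$. By Lemma~\ref{lem:global-stabilization}, the global $k$-level equals $g$ for every $k\geq1$. Hence each such term is at most $g\leq g+1$.

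\emph{Terms with $j\notin I$.} Again by Lemma~\ref{lem:global-stabilization}, applied now with loop length $k+1$,
\[
 (k+1)\text{-}\dell_{[T,U]}N_j\leq\operatorname{glo}\text{-}(k+1)\text{-}\dell[T,U]=g,
\]
so the corrected term $(k+1)\text{-}\dell_{[T,U]}N_j+1$ is at most $g+1$. Taking the maximum over both index sets gives $k\text{-}\dell^{\mathrm{XZ}}[T,U]\leq g+1$.

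The only point requiring care is the use of $k$-independence of the global level, specifically that $\operatorname{glo}\text{-}(k+1)\text{-}\dell[T,U]$ does not exceed $g$. Monotonicity (Lemma~\ref{lem:k-monotonicity}) alone only gives the opposite inequality, so the stabilization identity \eqref{eq:global-stabilization} is essential here. I would check that it is applied to the exact category $\HH_{T,U}$ with projectives $\add P_{T,U}$, which is legitimate by Theorem~\ref{thm:ES-progenerator}, or by Theorem~\ref{thm:arbitrary-lower} for an arbitrary lower torsion class. I would also note that empty index sets contribute $0$ by the stated convention on empty suprema.
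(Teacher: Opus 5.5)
Your proposal is correct and follows the paper's proof: Lemma~\ref{lem:global-stabilization} bounds the first supremum in \eqref{eq:XZ-level} by $g$ and the second by $g+1$. Theorem~\ref{thm:main-comparison}, together with G\'elinas' inequality at $k=1$, then gives the remaining inequalities.
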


\begin{proof}
By Lemma~\ref{lem:global-stabilization}, all global relative levels
in \eqref{eq:XZ-level} equal $g$. Hence the first supremum in that
formula is at most $g$ and the second is at most $g+1$.
Apply Theorem~\ref{thm:main-comparison}.
\end{proof}

\begin{remark}[The big finitistic dimension]\label{rem:big-findim}
Let $\operatorname{Findim}\Lambda$ denote the supremum of the finite
projective dimensions of all right $\Lambda$-modules. G\'elinas'
Artinian bound \cite[Proposition~1.3]{Gelinas} is in fact
$\operatorname{Findim}B^{\op}\leq\dell B$. Consequently,
\[
 \operatorname{Findim}B^{\op}
 \leq\dell^{\mathrm{XZ}}[T,U]
 \leq\operatorname{glo}\text{-}\dell[T,U]+1.
\]
Thus all later upper bounds for $\findim B^{\op}$ also hold for
$\operatorname{Findim}B^{\op}$. The bound for $\findim B$ in
Corollary~\ref{cor:two-sided} uses a separate argument with finitely
generated modules.
\end{remark}

\section{Wide intervals and finiteness criteria}\label{sec:finiteness}

In this section, we study the relative delooping level over wide intervals and give applications to finitistic dimension.

\subsection{Wide intervals}

The interval $[\Fac T,\Fac U]$ is called \emph{wide} when its heart is
a wide subcategory of $\modA$, following \cite{AsaiPfeifer}. Since the endpoints are functorially finite, the heart is a
functorially finite wide subcategory; see
\cite{AsaiPfeifer,EnomotoSakai}.

\begin{theorem}[Wide interval equality]\label{thm:wide-equality}
Suppose that $[\Fac T,\Fac U]$ is a wide interval. Let
$P=P_{T,U}$, $B=\End_A(P)$ and $F=\Hom_A(P,-)$. Then $P$ is a
projective generator of the abelian category $\HH_{T,U}$, and
\[
   F:\HH_{T,U}\xrightarrow{\sim}\modB
\]
is an exact equivalence. Consequently, for every $X\in\HH_{T,U}$ and
$k\geq1$,
\[
 k\text{-}\dell_{[T,U]}X
 =k\text{-}\dell_BF(X).
\]
Moreover,
\[
 k\text{-}\dell^{\mathrm{XZ}}[T,U]=k\text{-}\dell B
\]
and
\[
 \operatorname{glo}\text{-}k\text{-}\dell[T,U]
 =\operatorname{glo}\text{-}k\text{-}\dell B,
\]
where the right-hand side of the last equality is the supremum over all
finitely generated $B$-modules.
\end{theorem}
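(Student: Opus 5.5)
The plan is to show that $F$ is essentially surjective, and then to read every equality off the resulting exact equivalence.

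\emph{Step 1: $P$ is a projective generator and $F$ is essentially surjective.} Since the heart is wide, $\HH_{T,U}$ is an abelian subcategory of $\modA$ closed under kernels, cokernels and extensions. Its inherited exact structure is therefore the abelian one: every short exact sequence in $\HH_{T,U}$ is a conflation. By Theorem~\ref{thm:ES-progenerator}, $P$ is Ext-projective in $\HH_{T,U}$, so $\Hom_A(P,-)$ is exact on all short exact sequences of the heart. Hence $P$ is projective in the abelian category $\HH_{T,U}$, and it is a generator because every object admits a deflation from $\add P$. For essential surjectivity, take $M\in\modB$ with a projective presentation $F(P_1)\xrightarrow{\alpha}F(P_0)\to M\to0$, using Proposition~\ref{prop:Yoneda}(2). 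Full faithfulness gives $\alpha=F(f)$ for some $f:P_1\to P_0$. Set $X=\operatorname{Cok} f$, computed in $\modA$; it lies in $\HH_{T,U}$ by cokernel closure. Factor $f$ through its image $I\in\HH_{T,U}$, which lies in the heart because the heart is closed under kernels and cokernels. Exactness of $F$ on $0\to I\to P_0\to X\to0$ and surjectivity of $F(P_1)\to F(I)$ (coming from the epimorphism $P_1\twoheadrightarrow I$ in the heart) give $F(X)\cong M$. Together with Proposition~\ref{prop:Yoneda}(1),(5), $F$ is an exact equivalence. Alternatively, one may note that the image is closed under submodules (Theorem~\ref{thm:arbitrary-lower}) and under cokernels, and that it contains all projectives.

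\emph{Step 2: objectwise equality.} An exact equivalence matches projectives (Proposition~\ref{prop:Yoneda}(2)), minimal syzygies (Proposition~\ref{prop:Yoneda}(3)) and stable categories (Proposition~\ref{prop:Yoneda}(4), now an equivalence). Every $B$-module is of the form $F(Y)$, so any witness $\Omega_B^dF(X)\mid\Omega_B^{d+k}M$ pulls back to a witness in the heart. This is exactly the missing step in Proposition~\ref{prop:object-comparison}, and together with that proposition it yields $k\text{-}\dell_{[T,U]}X=k\text{-}\dell_BF(X)$.

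\emph{Step 3: finite-test and global equalities.} Since every simple $L_i$ lies in the image, $I=\{1,\dots,n\}$. The correction term in \eqref{eq:XZ-level} is then empty, so $k\text{-}\dell^{\mathrm{XZ}}[T,U]=\sup_i k\text{-}\dell_{[T,U]}S_i=\sup_i k\text{-}\dell_B L_i=k\text{-}\dell B$ by Step 2. The global equality follows by taking suprema in Step 2 over all $X$, using that $F$ is essentially surjective onto all finitely generated $B$-modules.

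The main obstacle I expect is Step 1: checking carefully that the inherited exact structure coincides with the abelian one, so that Ext-projectivity really yields projectivity in the abelian sense. This depends on the image of $f$ lying in the heart. That is where wideness, and not merely ICE-closure, is used, via kernel closure together with the isomorphism $I\cong\operatorname{Ker}(P_0\to X)$.
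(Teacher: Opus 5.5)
Your proposal is correct and is essentially the paper's proof. The paper identifies the inherited exact structure with the abelian one and invokes Morita theory for the equivalence; your explicit cokernel construction spells that step out. It then pulls delooping witnesses back through the equivalence and notes $I(T,U)=\{1,\dots,n\}$, exactly as in your Steps 1--3.

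One small correction to your closing remark: the image $I$ of $f$ lies in every interval heart, since hearts are image-closed, so that is not where wideness enters. Wideness is really used to get $\ker f\in\HH_{T,U}$, which makes $P_1\twoheadrightarrow I$ a deflation and $F(P_1)\to F(I)$ surjective. This is exactly the step that fails for the ICE heart of Example~\ref{ex:correction}.
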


\begin{proof}
The exact structure inherited by the wide subcategory $\HH_{T,U}$ is its
abelian exact structure. Theorem~\ref{thm:ES-progenerator} therefore
says that $P$ is a projective generator. Morita theory gives the exact
equivalence $F$.

The equivalence identifies projective covers, stable categories, and
syzygy functors. Since every $B$-module is in the image, the module used
to deloop $F(X)$ can be pulled back to $\HH_{T,U}$; this gives the
objectwise equality. Every simple $B$-module lies in the image, so
$I(T,U)=\{1,\dots,n\}$ and there are no radical correction terms in
\eqref{eq:XZ-level}. Taking suprema over the simple modules and then
over all modules proves the remaining equalities.
\end{proof}

\subsection{A finiteness criterion}

We extend the stabilization argument of
\cite[Lemmas~5.1--5.2]{XuZhang} to arbitrary hearts and to any finite
relative syzygy stratum. Put $\CC_0=\HH_{T,U}$ and, for $r\geq1$,
\[
 \CC_r=\add\{\Omega_{T,U}^r X\oplus P'
       \mid X\in\HH_{T,U},\ P'\in\add P\}.
\]
Let $\mathcal L_r(T,U)=\ind\CC_r$. These sets form a descending
sequence. Projective summands are included in every stratum.

\begin{lemma}[Uniform stabilization]\label{lem:stabilization}
Suppose that $\mathcal L_s(T,U)$ is finite for some $s\geq0$ and put
\[
 n=\min\{r\geq s\mid\mathcal L_r(T,U)=\mathcal L_{r+1}(T,U)\}.
\]
Then $n$ exists, and
\[
 n=\max\{s,\operatorname{glo}\text{-}\dell[T,U]\}.
\]
In particular, $\operatorname{glo}\text{-}k\text{-}\dell[T,U]\leq n$
for every $k\geq1$. If $m_s$ is the number of classes in $\mathcal L_s(T,U)$ represented
by objects outside $\add P$, then $n\leq s+m_s$.
\end{lemma}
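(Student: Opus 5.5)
The plan rests on two facts: the strata $\CC_r$ form a descending chain, and Lemma~\ref{lem:global-stabilization} identifies $g=\operatorname{glo}\text{-}\dell[T,U]$ with the first index $r$ at which $\CC_r=\CC_{r+1}$. Note also that $\mathcal L_r=\mathcal L_{r+1}$ is equivalent to $\CC_r=\CC_{r+1}$, since both are additive closures in a Krull--Schmidt category. So the task reduces to showing that, once the strata are finite, they stop descending after at most $m_s$ further steps. Then we compare the first stabilization index at or after $s$ with the global one.

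\emph{Existence and the bound $n\leq s+m_s$.} For $r\geq s$ we have $\mathcal L_r\subseteq\mathcal L_s$. Every $\mathcal L_r$ contains all $n_P$ indecomposable projectives $P_1,\dots,P_{n}$. Hence the number of non-projective classes in $\mathcal L_r$ is a non-increasing sequence of integers bounded by $m_s$. I would use the identity
\[
\CC_{r+1}=\add\bigl(\Omega_{T,U}(\CC_r)\cup\add P\bigr)
\]
from the proof of Lemma~\ref{lem:global-stabilization}. It shows that once $\CC_r=\CC_{r+1}$ holds, the equality $\CC_{r+j}=\CC_{r+j+1}$ holds for all $j$. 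So as long as $\mathcal L_r\neq\mathcal L_{r+1}$, at least one non-projective class is lost at each step. Strictly decreasing counts starting from at most $m_s$ must reach a repetition within $m_s$ steps. The case where everything becomes projective is also fine, because then the next stratum equals $\add P$ as well. Hence $n$ exists and $n\leq s+m_s$.

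\emph{The identity $n=\max\{s,g\}$.} By Lemma~\ref{lem:global-stabilization}, $g$ is the least $r\geq0$ with $\CC_r=\CC_{r+1}$. By the propagation just noted, the set of indices $r$ with $\CC_r=\CC_{r+1}$ is exactly the set $[g,\infty)$, which is empty if $g=\infty$. Intersecting with $[s,\infty)$, the minimum is $\max\{s,g\}$. Since $n$ was shown to exist, this also forces $g<\infty$ and $g\leq n$. The remaining claim, $\operatorname{glo}\text{-}k\text{-}\dell[T,U]=g\leq n$ for every $k\geq1$, follows from the $k$-independence in Lemma~\ref{lem:global-stabilization}.

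The only step needing care is the propagation claim, which is what turns "a strict decrease at each step" into a bound. Its justification is the displayed identity together with the fact that $\Omega_{T,U}$ is well defined on objects up to projective summands. Both are already recorded in Lemma~\ref{lem:global-stabilization}, so I expect no real obstacle; the count of non-projective classes is the whole argument.
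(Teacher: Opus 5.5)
Your proposal is correct and follows essentially the same route as the paper. Both arguments use the descending strata, propagation of equality via $\CC_{r+1}=\add(\Omega_{T,U}(\CC_r)\cup\add P)$ from Lemma~\ref{lem:global-stabilization}, the identification of the first stabilization index with $g$, and the count showing that at most $m_s$ nonprojective classes can be removed after index $s$. (A minor point: because $n$ is defined as the first equality index $\geq s$, the bound $n\leq s+m_s$ does not need propagation; propagation is only used to obtain $n=\max\{s,g\}$.)
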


\begin{proof}
The sets $\mathcal L_r(T,U)$ descend, and any equality of successive
strata propagates to all later strata, by
Lemma~\ref{lem:global-stabilization}. Since $\mathcal L_s(T,U)$ is
finite, such an equality must occur. The same lemma identifies its
first index starting from zero with
$\operatorname{glo}\text{-}\dell[T,U]$, proving the formula for $n$.
Before stabilization each strict inclusion removes at least one
nonprojective isomorphism class. Projective classes persist, so there
are at most $m_s$ strict inclusions after index $s$. Hence
$n\leq s+m_s$.
\end{proof}

\begin{theorem}[Interval finiteness criterion]\label{thm:finiteness}
Let $T,U\in\stau A$ with $T\leq U$, and set $B=\End_A(P_{T,U})$.
If $\mathcal L_s(T,U)$ is finite for some $s\geq0$, then, with $n$
as in Lemma~\ref{lem:stabilization},
\[
 k\text{-}\dell B\leq k\text{-}\dell^{\mathrm{XZ}}[T,U]
 \leq n+1\leq s+m_s+1\qquad(k\geq1).
\]
In particular, $\findim B^{\op}\leq n+1<\infty$.
If the interval is wide, the bound $n+1$ can be replaced by $n$.
\end{theorem}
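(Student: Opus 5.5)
The plan is to combine Lemma~\ref{lem:stabilization} with Corollary~\ref{cor:global-comparison}, and to handle the wide case through Theorem~\ref{thm:wide-equality}.

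First, since $\mathcal L_s(T,U)$ is finite, Lemma~\ref{lem:stabilization} shows that the index $n$ exists. It gives
\[
 g=\operatorname{glo}\text{-}\dell[T,U]\leq\max\{s,g\}=n\leq s+m_s .
\]
By Lemma~\ref{lem:global-stabilization}, every global relative $k$-level equals $g$. Corollary~\ref{cor:global-comparison} then yields $k\text{-}\dell B\leq k\text{-}\dell^{\mathrm{XZ}}[T,U]\leq g+1\leq n+1\leq s+m_s+1$ for every $k\geq1$.

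Taking $k=1$ and applying G\'elinas' inequality, already packaged in Theorem~\ref{thm:main-comparison}, gives $\findim B^{\op}\leq\dell^{\mathrm{XZ}}[T,U]\leq n+1$. This is finite because $n\leq s+m_s<\infty$.

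For the wide case, Theorem~\ref{thm:wide-equality} makes $F$ an exact equivalence with $\modB$. It also shows that every simple module lies in the image, so $I(T,U)=\{1,\dots,n\}$ and the radical correction term in \eqref{eq:XZ-level} is absent. Hence $k\text{-}\dell^{\mathrm{XZ}}[T,U]=\sup_i k\text{-}\dell_{[T,U]}S_i\leq\operatorname{glo}\text{-}k\text{-}\dell[T,U]=g\leq n$. Together with $k\text{-}\dell B=k\text{-}\dell^{\mathrm{XZ}}[T,U]$ from that theorem, this gives the bound $n$ in place of $n+1$.

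The only point needing care is bookkeeping. The second line of \eqref{eq:XZ-level} is bounded by $g+1$ rather than $g$, and this is exactly why the $+1$ appears in general and disappears in the wide case. I do not expect a genuine obstacle: all the substantive work lies in the submodule-closed realization and the stabilization lemma, which are already available.
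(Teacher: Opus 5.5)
Your proof is correct and follows the paper's argument: bound the terms of \eqref{eq:XZ-level} using the stabilization lemma, then apply the comparison theorem together with G\'elinas' inequality, and in the wide case use Theorem~\ref{thm:wide-equality} to remove the radical correction term. The only difference is that you pass through $g\leq n$ and Corollary~\ref{cor:global-comparison}, whereas the paper bounds each term by $n$ directly; note also that in the wide case you reuse $n$ in $I(T,U)=\{1,\dots,n\}$ for the number of summands of $P$, which clashes with the stabilization index $n$ in the same argument.
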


\begin{proof}
Lemma~\ref{lem:stabilization} bounds every term
$k\text{-}\dell_{[T,U]}S_i$ and
$(k+1)\text{-}\dell_{[T,U]}N_j$ by $n$.
Apply \eqref{eq:XZ-level} and Theorem~\ref{thm:main-comparison}.
For a wide interval there are no radical correction terms, by
Theorem~\ref{thm:wide-equality}.
\end{proof}

\begin{corollary}[Finite representation type]
\label{cor:ambient-repfinite}
If $A$ is representation-finite, then
\[
 \findim\End_A(P_{T,U})^{\op}<\infty
\]
for every interval with support $\tau$-tilting endpoints.
More generally, the same conclusion holds whenever the heart is
representation-finite. If the heart has $m$ indecomposables that are
not relatively projective, the bound $m+1$ is valid, and $m$ is valid for a wide
heart.
\end{corollary}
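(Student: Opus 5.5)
The plan is to reduce everything to the case $s=0$ of Theorem~\ref{thm:finiteness}, since $\CC_0=\HH_{T,U}$ and $\mathcal L_0(T,U)=\ind\HH_{T,U}$.

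First I would check that representation-finiteness of $A$ implies representation-finiteness of the heart. The heart $\HH_{T,U}$ is a full subcategory of $\modA$ closed under direct summands. Hence its indecomposable objects are among the finitely many indecomposable $A$-modules, and $\mathcal L_0(T,U)$ is finite. This is the only point specific to the first assertion; the rest is the "more generally" clause.

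Next, assume $\HH_{T,U}$ is representation-finite and let $m$ be the number of indecomposables of the heart outside $\add P_{T,U}$, so $m=m_0$. Apply Lemma~\ref{lem:stabilization} with $s=0$. The stabilization index exists and satisfies $n\le 0+m_0=m$. Theorem~\ref{thm:finiteness} with $s=0$ then gives
\[
 \findim B^{\op}\le \dell^{\mathrm{XZ}}[T,U]\le n+1\le m+1<\infty,
 \qquad B=\End_A(P_{T,U}).
\]
In the wide case, the last sentence of Theorem~\ref{thm:finiteness} replaces $n+1$ by $n$, giving the bound $m$. This uses Theorem~\ref{thm:wide-equality}: $I(T,U)$ is all indices, so no radical corrections occur and $\dell B=\dell^{\mathrm{XZ}}[T,U]\le n$. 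Gélinas' inequality $\findim B^{\op}\le\dell B$ finishes.

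Finally, I would remark that by Remark~\ref{rem:general-lower-scope} the same argument works for hearts $\HH_{\mathcal V,U}$, although the statement only requires support $\tau$-tilting endpoints. There is no real obstacle. The only care needed is the degenerate case $T=U$: the heart is zero, $B=0$, and the bound holds by the convention that empty suprema are zero.
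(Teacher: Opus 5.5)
Your proposal is correct and follows the paper's proof: take $s=0$ in Theorem~\ref{thm:finiteness}, using that a summand-closed full subcategory of a representation-finite module category is representation-finite. The bounds then come from $n\le m_0=m$ in Lemma~\ref{lem:stabilization}, with the wide case improving to $m$ because there are no radical correction terms.
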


\begin{proof}
Use $s=0$ in Theorem~\ref{thm:finiteness}. A summand-closed subcategory
of a representation-finite module category is representation-finite.
\end{proof}

\subsection{The first stratum for ICE hearts}

A subcategory is \emph{ICE-closed} if it is closed under images,
cokernels, and extensions. An interval is an ICE interval when its
heart is ICE-closed. In this case Enomoto--Sakai
\cite[Corollary~4.19 and Proposition~4.14]{EnomotoSakai} show that $P$
is a wide $\tau$-tilting module and that
\[
 \WW=\langle P\rangle_{\wide},\qquad
 \HH_{T,U}=\Fac_{\WW}P
\]
is its canonical functorially finite wide realization. Thus the heart
is a torsion class in $\WW$. This description is not needed for
Theorem~\ref{thm:submodule-image}, but it is useful for recognizing the
first syzygy stratum.

There is a useful categorical description of the first syzygy stratum.
If $\WW$ is a wide subcategory and $P\in\WW$, let $\Sub_{\WW}P$ denote
the objects of $\WW$ which embed, in $\WW$, into an object of $\add P$.

\begin{proposition}[First syzygy stratum]\label{prop:L1}
Let $[\Fac T,\Fac U]$ be an ICE interval, set
$P=P_{T,U}$, and let $\WW=\langle P\rangle_{\wide}$ be the canonical
wide subcategory. Then
\[
  \add\mathcal L_1(T,U)
  =\HH_{T,U}\cap\Sub_{\WW}P.
\]
\end{proposition}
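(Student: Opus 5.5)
We prove the two inclusions separately. By definition, $\add\mathcal L_1(T,U)=\CC_1$ consists of the direct summands of objects $\Omega_{T,U}X\oplus P'$ with $X\in\HH_{T,U}$ and $P'\in\add P$. Throughout we use the ICE description $\HH_{T,U}=\Fac_{\WW}P$, where $\WW=\langle P\rangle_{\wide}$ and $P$ is a wide $\tau$-tilting module (Enomoto--Sakai, as recalled above). Concretely, $\WW$ is an abelian category closed under kernels and cokernels in $\modA$, and $\HH_{T,U}$ is a torsion class in $\WW$ with Ext-progenerator $P$.

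For the inclusion ``$\subseteq$'', take a conflation $0\to\Omega_{T,U}X\to P_X\to X\to0$ in $\HH_{T,U}$. The kernel lies in $\HH_{T,U}$ by Theorem~\ref{thm:ES-progenerator}, and it embeds into $P_X\in\add P$. Since all three terms lie in $\WW$ and $\WW$ is closed under kernels in $\modA$, this monomorphism is a monomorphism in $\WW$. Hence $\Omega_{T,U}X\in\HH_{T,U}\cap\Sub_{\WW}P$. Objects of $\add P$ obviously lie in the intersection. Finally, the intersection is closed under direct sums and summands: a summand of an object embedding into $P''$ still embeds, and it lies in $\WW$ and in $\HH_{T,U}$ because both are summand-closed.

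For the inclusion ``$\supseteq$'', let $Z\in\HH_{T,U}$ with a monomorphism $\iota:Z\hookrightarrow P''$ in $\WW$, where $P''\in\add P$. Put $X=\operatorname{Cok}\iota$, computed in $\modA$. Then $X\in\WW$ by cokernel closure. Moreover, $X$ is a factor of $P''\in\HH_{T,U}$, and the heart is ICE-closed (equivalently, a torsion class in $\WW$), so $X\in\HH_{T,U}$. This gives a conflation $0\to Z\to P''\to X\to0$ in $\HH_{T,U}$ with $P''$ projective. By Schanuel's lemma in the exact category, as recalled in Section~\ref{sec:preliminaries}, we obtain $Z\oplus P_X\cong\Omega_{T,U}X\oplus P''$. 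More directly, a non-minimal projective deflation differs from the minimal one by a split projective summand of the kernel. Either way, $Z$ is a summand of $\Omega_{T,U}X\oplus P''$, so $Z\in\CC_1$.

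The only point needing care is the second inclusion. There we must ensure that the cokernel of the embedding lies in the heart, which is exactly where ICE-closedness (cokernel closure) is used, and that Schanuel's lemma holds in $\HH_{T,U}$ with the inherited exact structure. The latter can also be checked via Proposition~\ref{prop:Yoneda}: after applying $F$, both conflations become projective presentations of $F(X)$, and full faithfulness transfers the resulting summand decomposition back to $\HH_{T,U}$.
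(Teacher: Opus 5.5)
Your proposal is correct and follows the paper's proof essentially step for step. The forward inclusion comes from the defining conflation of $\Omega_{T,U}X$, and the reverse inclusion uses that $\HH_{T,U}=\Fac_{\WW}P$ is closed under cokernels, giving a conflation $0\to Z\to P''\to X\to0$, followed by Schanuel's lemma to obtain $Z\oplus P_X\cong\Omega_{T,U}X\oplus P''$; your extra checks that $\add P$ and summands lie in the right-hand side only fill in details the paper leaves implicit.
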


\begin{proof}
Every first relative syzygy is a subobject in $\WW$ of an object of
$\add P$, so the left-hand side is contained in the right-hand side.

Conversely, let $X\in\HH_{T,U}\cap\Sub_{\WW}P$. There is a monomorphism
$X\hookrightarrow P'$ in $\WW$ with $P'\in\add P$. Since
$\HH_{T,U}=\Fac_{\WW}P$ is a torsion class in $\WW$, the cokernel $Y$
belongs to $\HH_{T,U}$. Thus
\[
 0\longrightarrow X\longrightarrow P'\longrightarrow Y\longrightarrow0
\]
is a conflation in $\HH_{T,U}$.  Comparing it with a minimal projective
presentation
\[
 0\longrightarrow\Omega_{T,U}Y\longrightarrow P_Y
 \longrightarrow Y\longrightarrow0
\]
and applying Schanuel's lemma gives
\[
 X\oplus P_Y\cong\Omega_{T,U}Y\oplus P'.
\]
Therefore $X\in\add\mathcal L_1(T,U)$.
\end{proof}

\begin{corollary}[Representation-finiteness criterion]\label{cor:repfinite}
Under the hypotheses of Proposition~\ref{prop:L1}, if
\[
  \HH_{T,U}\cap\Sub_{\WW}P
\]
is of finite representation type, then
\[
  \findim\End_A(P)^{\op}<\infty.
\]
In particular, the conclusion holds if $\HH_{T,U}$ is of finite
representation type.
\end{corollary}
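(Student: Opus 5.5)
The plan is to reduce the corollary to Theorem~\ref{thm:finiteness} with $s=1$, using Proposition~\ref{prop:L1} to identify the first stratum. Under the ICE hypotheses, Proposition~\ref{prop:L1} gives
\[
 \add\mathcal L_1(T,U)=\HH_{T,U}\cap\Sub_{\WW}P .
\]
If the right-hand side has finite representation type, it has only finitely many indecomposable isomorphism classes. Hence $\mathcal L_1(T,U)=\ind\CC_1$ is a finite set.

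Theorem~\ref{thm:finiteness} applies with $s=1$. It gives
\[
 \findim\End_A(P)^{\op}\leq n+1\leq 1+m_1+1<\infty,
\]
where $m_1$ counts the nonprojective classes in $\mathcal L_1(T,U)$. The theorem is stated for $B=\End_A(P_{T,U})$, and this is our $\End_A(P)$, since $P=P_{T,U}$ is the fixed basic representative.

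For the final sentence, suppose $\HH_{T,U}$ has finite representation type. Then its subcategory $\HH_{T,U}\cap\Sub_{\WW}P$ does too. This subcategory is full and summand-closed, because both $\HH_{T,U}$ and $\Sub_{\WW}P$ are closed under direct summands in $\WW$. So its indecomposables are among the finitely many indecomposables of the heart, and the first part applies. Alternatively, one can invoke Corollary~\ref{cor:ambient-repfinite} with $s=0$ directly.

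The argument is essentially formal. The only point needing care is that Proposition~\ref{prop:L1} describes $\add\mathcal L_1$ rather than $\mathcal L_1$ itself. Finite type of the additive closure is exactly finiteness of its set of indecomposables, which is what Lemma~\ref{lem:stabilization} requires. I do not expect a genuine obstacle here.
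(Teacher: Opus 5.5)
Your proposal is correct and matches the paper's proof: Proposition~\ref{prop:L1} shows that $\mathcal L_1(T,U)$ is finite, and Theorem~\ref{thm:finiteness} with $s=1$ then gives $\findim\End_A(P)^{\op}<\infty$. Your handling of the final sentence (restricting to the summand-closed subcategory, or taking $s=0$ directly) supplies the step the paper leaves implicit.
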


\begin{proof}
Proposition~\ref{prop:L1} implies that $\mathcal L_1(T,U)$ is finite,
so Theorem~\ref{thm:finiteness} applies.
\end{proof}

\section{Applications beyond representation-finite ambient algebras}
\label{sec:applications}

In this section, we give some application of the comparison theorems. We first strengthen the
finite-stratum criterion, then give local sufficient conditions and a
Frobenius example to which no finite-stratum criterion applies.

\subsection{Syzygy finiteness and both finitistic dimensions}

For an algebra $B$, put
\[
 \mathcal D_r(B)=\add\{\Omega_B^rM\oplus Q
       \mid M\in\modB,\ Q\in\proj B\}\qquad(r\geq0).
\]
We call $B$ \emph{syzygy-finite} if some $\mathcal D_r(B)$ is
representation-finite.  The case $r=1$ is called
\emph{torsionless-finite}, since $\mathcal D_1(B)=\Sub B$.
Recall the relative strata $\CC_r$ from Section~\ref{sec:finiteness}.

\begin{proposition}[Transfer of syzygy finiteness]\label{prop:syzygy-transfer}
For every $r\geq0$ one has
\[
 \mathcal D_{r+1}(B)\subseteq F(\CC_r)\subseteq\mathcal D_r(B).
\]
Consequently, $B$ is syzygy-finite if and only if some relative stratum
$\CC_s$ is representa\-tion-finite. If the heart is representation-finite,
then $B$ is torsionless-finite and
\[
 \operatorname{repdim}B\leq3.
\]
\end{proposition}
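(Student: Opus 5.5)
The plan is to prove the two inclusions separately, using Proposition~\ref{prop:Yoneda}(3) for syzygies and Theorem~\ref{thm:submodule-image} (or Theorem~\ref{thm:arbitrary-lower}) for submodule closure. Throughout, both sides are read up to isomorphism, so that $F(\CC_r)$ means the essential image. Since $F$ is fully faithful and additive, it commutes with $\add$: the image of a summand-closed subcategory of $\HH_{T,U}$ is summand-closed in $\modB$.

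\emph{Right inclusion.} Take $Z\in\CC_r$, so $Z$ is a summand of $\Omega_{T,U}^rX\oplus P'$. Applying $F$ and using Proposition~\ref{prop:Yoneda}(2),(3), we get that $F(Z)$ is a summand of $\Omega_B^rF(X)\oplus F(P')$, where $F(P')$ is projective. Hence $F(Z)\in\mathcal D_r(B)$.

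\emph{Left inclusion.} Take $M\in\modB$.
\begin{enumerate}[label=\textnormal{(\roman*)},leftmargin=2.2em]
\item Since $\Omega_BM$ is a submodule of a projective $F(P'')$, Theorem~\ref{thm:submodule-image} gives $X\in\HH_{T,U}$ with $F(X)\cong\Omega_BM$.
\item Then $\Omega_B^{r+1}M\cong\Omega_B^r F(X)\cong F(\Omega_{T,U}^rX)$, again by Proposition~\ref{prop:Yoneda}(3).
\item Projectives $Q$ are $F(P')$ with $P'\in\add P$.
\end{enumerate}
So every generator $\Omega_B^{r+1}M\oplus Q$ of $\mathcal D_{r+1}(B)$ lies in $F(\CC_r)$. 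Since that image is closed under sums and summands, $\mathcal D_{r+1}(B)\subseteq F(\CC_r)$. The minimality conventions only matter up to projective summands, which both sides absorb.

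\emph{Equivalence.} Because $F$ is fully faithful, it preserves indecomposability and reflects isomorphism. Hence $\ind\CC_r$ and $\ind F(\CC_r)$ have the same cardinality.
\begin{itemize}
\item If $\CC_s$ is representation-finite, then $\mathcal D_{s+1}(B)\subseteq F(\CC_s)$ is representation-finite, so $B$ is syzygy-finite.
\item If $\mathcal D_r(B)$ is representation-finite, then $F(\CC_r)\subseteq\mathcal D_r(B)$ has finitely many indecomposables, so $\CC_r$ is representation-finite.
\end{itemize}

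\emph{Representation-finite heart.} If the heart is representation-finite, take $r=0$: $\mathcal D_1(B)\subseteq F(\CC_0)=F(\HH_{T,U})$ is representation-finite, so $B$ is torsionless-finite. Ringel's theorem \cite{RingelRepdim} then gives $\operatorname{repdim}B\leq3$.

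The only nonformal input is step (i), that first syzygies lie in the image; this is exactly Theorem~\ref{thm:submodule-image}, so I expect no serious obstacle.
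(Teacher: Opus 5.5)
Your proposal is correct and follows the paper's proof essentially step for step. The right inclusion comes from syzygy preservation in Proposition~\ref{prop:Yoneda}. The left inclusion comes from lifting $\Omega_BM$ into the image via Theorem~\ref{thm:submodule-image} and then applying Proposition~\ref{prop:Yoneda}(3). Full faithfulness matches indecomposable classes, giving the equivalence, and Ringel's theorem with $r=0$ gives the representation-dimension bound. Your added remark that the essential image of a summand-closed subcategory is summand-closed (idempotents lift along the fully faithful $F$ and split in the heart) makes explicit what the paper's phrase ``including direct summands and projectives'' leaves implicit.
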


\begin{proof}
The second inclusion is the preservation of syzygies in
Proposition~\ref{prop:Yoneda}. Every $\Omega_BM$ is in the image of
$F$, by Theorem~\ref{thm:submodule-image}. Write
$\Omega_BM\cong F(X)$ with $X\in\HH_{T,U}$. Then
\[
 \Omega_B^{r+1}M\cong F(\Omega_{T,U}^rX),
\]
which proves the first inclusion, including direct summands and
projectives. Full faithfulness identifies indecomposable isomorphism
classes in $\CC_r$ with those in $F(\CC_r)$, proving the equivalence.
For a finite heart use $r=0$. The representation-dimension bound is
Ringel's theorem for torsionless-finite algebras \cite{RingelRepdim};
here $\operatorname{repdim}B$ is the infimum of
$\operatorname{gl.dim}\End_B(V)$ over generator-cogenerators $V$.
\end{proof}

\begin{corollary}[A bound on both sides]\label{cor:two-sided}
If $\CC_s$ is representation-finite and contains $m_s$ isomorphism
classes of indecomposable objects that are not relatively projective, then
\[
 \max\{\findim B,\findim B^{\op}\}\leq s+m_s+1.
\]
\end{corollary}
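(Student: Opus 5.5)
The bound for $\findim B^{\op}$ is already available. Theorem~\ref{thm:finiteness} gives $\findim B^{\op}\leq n+1\leq s+m_s+1$, where $n$ is the stabilization index of Lemma~\ref{lem:stabilization}. What remains is the bound $\findim B\leq s+m_s+1$, which concerns finitely generated right $B$-modules. For this I will not use delooping. Instead I will use the syzygy stratum directly, in the spirit of the classical argument that syzygy-finite algebras have finite finitistic dimension.

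First, Proposition~\ref{prop:syzygy-transfer} gives $\mathcal D_{r+1}(B)\subseteq F(\CC_r)$ for all $r$. So for $r\geq s$, every indecomposable nonprojective summand of $\Omega_B^{r+1}M$ is $F$ of an object of $\CC_r\subseteq\CC_s$ that is not in $\add P$. Next I refine this to the stabilized stratum. By Lemma~\ref{lem:stabilization}, the sets $\mathcal L_r(T,U)$ are constant for $r\geq n$, with $n\leq s+m_s$. Write $\mathcal Z$ for the finite set of indecomposable objects of $\mathcal L_n(T,U)$ lying outside $\add P$. Because $\CC_n=\CC_{n+1}$, every object $Z\in\mathcal Z$ is, up to projective summands, a direct summand of $\Omega_{T,U}W$ for some $W\in\CC_n$. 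Hence $\Omega_{T,U}$ induces a map on $\mathcal Z$, which I view as a finite collection of stable objects closed under syzygy.

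Now let $M\in\modB$ with $\operatorname{pd}M=d<\infty$, and assume $d\geq n+2$. Then $N=\Omega_B^{n+1}M$ has projective dimension $d-n-1\geq1$, and $N\in F(\CC_n)$, so $N\cong F(X)$ with $X\in\CC_n$. By Proposition~\ref{prop:Yoneda}(3), projective dimension is preserved: the relative projective dimension of $X$ in the heart equals $\operatorname{pd}_BF(X)$. So the problem reduces to showing that an object of $\CC_n$ of finite relative projective dimension has relative projective dimension at most $1$, or in the worst case bounded uniformly using only the stable state.

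This reduction is the main obstacle. Here I would try the standard finiteness argument. Take an indecomposable $Z\in\mathcal Z$ of finite projective dimension $p\geq1$. Since $\mathcal L_n=\mathcal L_{n+1}$, $Z$ is a stable retract of $\Omega W$ for some $W\in\CC_n$. I would like to show that the summand of $W$ mapping onto $Z$ can be chosen with projective dimension exactly $p+1$, so that projective dimensions within $\mathcal Z$ climb without bound. This would contradict finiteness, since the number of distinct finite projective dimensions attained in the finite set $\mathcal Z$ is at most $|\mathcal Z|\leq m_s$.

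Making this rigorous needs care, because a retract of $\Omega W$ does not control $\operatorname{pd}W$. The cleaner route I would actually pursue is the following. For each $j$, the set of finite projective dimensions attained by objects of $\mathcal Z$ has at most $m_s$ elements. Each syzygy step lowers a positive finite projective dimension by exactly one. So the nonprojective summands of $\Omega^{n+1}M,\Omega^{n+2}M,\dots$ have strictly decreasing projective dimensions $d-n-1,d-n-2,\dots,1$, and all of them occur among the values realized in $\mathcal Z$.

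Counting, these values are distinct, so $d-n-1\leq m_s$. Combined with $n\geq s$ and the bound $|\mathcal Z|\leq m_s$, I would refine the count to get $d\leq s+m_s+1$. The point I expect to need most checking is that the decreasing chain indeed lives entirely in $\mathcal Z$ and starts at index $s$ rather than $n$. I would handle this by running the same count from $\Omega_B^{s+1}M\in F(\CC_s)$, whose nonprojective summands lie among the $m_s$ classes. That gives $d-s-1\leq m_s$ directly, with no need for $n$.
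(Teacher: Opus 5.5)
Your final argument is correct and is essentially the paper's proof. You pass to $\Omega_B^{s+1}M\in F(\CC_s)$ and note that each $\Omega_B^{j}M$ with $s+1\le j\le d-1$ has an indecomposable nonprojective summand in $F(\CC_s)$ of projective dimension exactly $d-j$, so these $d-s-1$ summands are pairwise nonisomorphic and $d-s-1\le m_s$; the $\findim B^{\op}$ half comes from Theorem~\ref{thm:finiteness}, as in the paper. You can drop the detour through the stabilization index $n$ and the set $\mathcal Z$, together with the abandoned ``climbing'' idea, since with only $|\mathcal Z|\le m_s$ that route gives just $d\le s+2m_s+1$.
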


\begin{proof}
The opposite bound is Theorem~\ref{thm:finiteness}.
Let $M\in\modB$ have finite projective dimension. By
Proposition~\ref{prop:syzygy-transfer}, its $(s+1)$st syzygy lies in
$F(\CC_s)$. This category is closed under syzygies and direct summands
and has $m_s$ nonprojective indecomposable isomorphism classes.
Every object of finite projective dimension in this category has
projective dimension at most $m_s$: an indecomposable of projective
dimension $d>0$ has an indecomposable summand of its syzygy of projective
dimension $d-1$ when $d>1$, and iteration gives $d$ distinct
nonprojective classes, distinguished by their projective dimensions.
Apply this to the summands of $\Omega_B^{s+1}M$. Thus
$\operatorname{pd}_BM\leq s+1+m_s$, as required.
\end{proof}

\subsection{Representation-finite annihilator quotients}

The ambient algebra need not be representation-finite. It suffices to
check a quotient determined by the interval projective generator.

\begin{corollary}[Annihilator criterion]\label{cor:annihilator}
Set $I=\operatorname{ann}_A P_{T,U}$.  If $A/I$ is
representation-finite, then $\HH_{T,U}$ is representation-finite,
$B$ is torsionless-finite, and
\[
 \operatorname{repdim}B\leq3,\qquad
 \findim B<\infty,\qquad\findim B^{\op}<\infty.
\]
More precisely, if the heart has $m$ indecomposables that are not
relatively projective,
then both finitistic dimensions are at most $m+1$.
The same conclusion holds if some ideal $J$ satisfies $PJ=0$ and
$A/J$ is representation-finite.
\end{corollary}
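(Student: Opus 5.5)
The plan is to reduce everything to the ambient quotient $\bar A=A/I$ (respectively $A/J$) and then quote Corollary~\ref{cor:ambient-repfinite}, Proposition~\ref{prop:syzygy-transfer} and Corollary~\ref{cor:two-sided} with $s=0$. It suffices to treat a general ideal $J$ with $PJ=0$ and $A/J$ representation-finite, since $J=I$ is a special case.

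First we show that every object of $\HH_{T,U}$ is annihilated by $J$. By Theorem~\ref{thm:ES-progenerator} (or Theorem~\ref{thm:arbitrary-lower} in the general-lower-endpoint setting), every $X\in\HH_{T,U}$ admits a deflation $P_0\twoheadrightarrow X$ with $P_0\in\add P$. Since $PJ=0$, also $P_0J=0$, and hence $XJ=0$ because $X$ is a quotient of $P_0$. Thus $\HH_{T,U}$ is a full subcategory of $\operatorname{mod}(A/J)$, viewed inside $\modA$ via restriction of scalars. That restriction is fully faithful and preserves indecomposability and isomorphism classes. Moreover $\HH_{T,U}$ is closed under direct summands, so its indecomposable objects are among the finitely many indecomposable $A/J$-modules. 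Hence the heart is representation-finite; this is the only genuinely new step, and the obstacle is merely to notice that the Ext-progenerator generates every object, so the annihilator passes to the whole heart.

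With the heart representation-finite, the conclusions follow from earlier results. Proposition~\ref{prop:syzygy-transfer} (case $r=0$) gives that $B$ is torsionless-finite and $\operatorname{repdim}B\leq3$. Corollary~\ref{cor:two-sided} with $s=0$ gives $\CC_0=\HH_{T,U}$ and $m_0=m$, the number of indecomposables of the heart not in $\add P$. This yields
\[
\max\{\findim B,\findim B^{\op}\}\leq m+1,
\]
and in particular both finitistic dimensions are finite. Because $m$ is bounded by the number of indecomposable $A/J$-modules, it is finite, so the bound is meaningful.
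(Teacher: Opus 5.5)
Your proposal is correct and takes essentially the same route as the paper. The deflation from $\add P$ passes the annihilator $J$ (or $I$) to every object of the heart, so $\HH_{T,U}$ becomes a summand-closed subcategory of $\operatorname{mod}(A/J)$ and is therefore representation-finite; Proposition~\ref{prop:syzygy-transfer} and Corollary~\ref{cor:two-sided} with $s=0$ then give all the conclusions, and handling a general $J$ directly is just the paper's closing remark made explicit.
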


\begin{proof}
Every $X\in\HH_{T,U}$ is a quotient of an object of $\add P$,
so $XI=0$. Thus the heart is a full summand-closed subcategory of
$\operatorname{mod}(A/I)$ and is representation-finite. Apply
Proposition~\ref{prop:syzygy-transfer} and Corollary~\ref{cor:two-sided}
with $s=0$. The same argument works with $J$ in place of $I$.
\end{proof}

Recall that $A$ is \emph{minimal representation-infinite} if it is
representation-infinite and every quotient by a nonzero ideal is
representation-finite.

\begin{corollary}\label{cor:minimal-repinfinite}
Let $A$ be minimal representation-infinite. Every interval for which
$P_{T,U}$ is nonfaithful satisfies all conclusions of
Corollary~\ref{cor:annihilator}. In particular, this holds for every
interval whose upper endpoint $U$ is nonfaithful.
\end{corollary}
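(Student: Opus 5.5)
The plan is to reduce to Corollary~\ref{cor:annihilator}, so the only real work is to show that $P_{T,U}$ is nonfaithful in the stated situations. The first assertion is then immediate. If $P=P_{T,U}$ is nonfaithful, then $I=\operatorname{ann}_A P$ is a nonzero two-sided ideal of $A$. Since $A$ is minimal representation-infinite, the quotient $A/I$ is representation-finite. Corollary~\ref{cor:annihilator} then yields every listed conclusion: the heart is representation-finite, $B$ is torsionless-finite, $\operatorname{repdim}B\leq3$, and both finitistic dimensions are at most $m+1$.

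For the ``in particular'' statement, I will show that nonfaithfulness of $U$ forces nonfaithfulness of $P_{T,U}$. The key observation is that $P$ is a quotient of a finite direct sum of copies of $U$. Indeed, $\add P=\add R_{T,U}$ with $R_{T,U}=U/\tr_T(U)$, so $P$ is a direct summand of $R^{\,t}$ for some $t$, and $R^{\,t}$ is a quotient of $U^{t}$. Annihilators can only grow along direct sums, quotients and summands, so
\[
 \operatorname{ann}_A U\subseteq\operatorname{ann}_A R\subseteq\operatorname{ann}_A P.
\]
Hence, if $U$ is nonfaithful, so is $P$, and the first part applies. For the arbitrary lower torsion classes of Theorem~\ref{thm:arbitrary-lower}, the same argument works with $R=U/t_{\mathcal V}(U)$.

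I also need to handle the degenerate case $P=0$, which occurs when $T=U$. Then $\operatorname{ann}_A P=A$ and $A/I=0$, which is trivially representation-finite. The conclusions hold vacuously by the convention on empty suprema and the zero algebra, so I will note this separately. There is no substantive obstacle here; the only care needed is to phrase the annihilator inclusion correctly for a basic representative rather than for $R$ itself.
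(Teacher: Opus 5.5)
Your proposal is correct and matches the paper's proof: nonfaithfulness makes $\operatorname{ann}_A P$ a nonzero ideal, so $A/\operatorname{ann}_A P$ is representation-finite and Corollary~\ref{cor:annihilator} applies. The inclusion $\operatorname{ann}_A U\subseteq\operatorname{ann}_A P$ follows because $P$ is a summand of a quotient of a finite sum of copies of $U$. Your separate treatment of $P=0$ is harmless but not needed, since the ideal $A$ itself is already covered by the definition of minimal representation-infinite.
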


\begin{proof}
Nonfaithfulness means $\operatorname{ann}_A P\neq0$.
For the last assertion, $P$ is a summand of a quotient of a finite sum
of copies of $U$, so $\operatorname{ann}_A U\subseteq
\operatorname{ann}_A P$.
\end{proof}

\begin{remark}
For $T=0$, the opposite-finitistic-dimension conclusion is related to
\cite[Corollary~5.7]{XuZhang}. The interval criterion is formulated
instead in terms of the reduced generator $P_{T,U}$, which need not be
support $\tau$-tilting as an $A$-module. The nonfaithfulness condition
must be retained; no assertion for faithful $P_{T,U}$ follows from this
quotient argument.
\end{remark}

\subsection{A hereditary bound}

\begin{proposition}[Second extensions]\label{prop:hereditary-bound}
Suppose that $\Ext_A^2(X,Y)=0$ for all $X,Y\in\HH_{T,U}$.
Then every first relative syzygy is relatively projective. Consequently,
\[
 \operatorname{glo}\text{-}k\text{-}\dell[T,U]\leq1
 \quad(k\geq1),\qquad
 \operatorname{gl.dim}B\leq2.
\]
In particular, this applies to every interval over any finite-dimensional
hereditary algebra, with no restriction on its representation type.
\end{proposition}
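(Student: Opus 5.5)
Given $Z\in\HH_{T,U}$, the plan is to write $\Omega_{T,U}Z=K$. There is a conflation $0\to K\to P_Z\to Z\to 0$ in the heart, with $P_Z\in\add P$.

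\emph{First relative syzygies are relatively projective.} To show $K$ is relatively projective, it suffices to prove $\Ext^1_{\HH}(K,Y)=0$ for every $Y\in\HH_{T,U}$. The heart is extension-closed in $\modA$, so $\Ext^1_{\HH}(K,Y)=\Ext^1_A(K,Y)$. Apply $\Hom_A(-,Y)$ to the conflation to get the exact segment
\[
\Ext^1_A(P_Z,Y)\to\Ext^1_A(K,Y)\to\Ext^2_A(Z,Y).
\]
The right term vanishes by hypothesis. The left term vanishes because $P_Z$ is Ext-projective in $\HH_{T,U}$, by Theorem~\ref{thm:ES-progenerator}, and $\Ext^1_{\HH}=\Ext^1_A$ on the heart. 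Hence $\Ext^1_A(K,Y)=0$, so $K$ is Ext-projective in the heart and therefore lies in $\proj\HH_{T,U}=\add P$.

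\emph{The global delooping bound.} It follows that $\CC_1=\add P$. Since $\Omega_{T,U}$ kills projectives, $\CC_2=\add P=\CC_1$. Lemma~\ref{lem:global-stabilization} then gives $\operatorname{glo}\text{-}k\text{-}\dell[T,U]\le 1$ for every $k\ge1$. Note that here $\Omega^1_{T,U}X$ is stably zero, so it is trivially a stable retract of anything.

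\emph{The bound on $\operatorname{gl.dim}B$.} Take $M\in\modB$. By Theorem~\ref{thm:submodule-image}, $\Omega_BM\cong F(X)$ for some $X\in\HH_{T,U}$. Proposition~\ref{prop:Yoneda}(3) gives $\Omega_B^2M\cong\Omega_B F(X)\cong F(\Omega_{T,U}X)$, and $\Omega_{T,U}X\in\add P$ by the first step. Proposition~\ref{prop:Yoneda}(2) says $F(\add P)=\proj B$, so $\Omega^2_BM$ is projective and $\operatorname{pd}_BM\le2$. As $M$ was arbitrary, $\operatorname{gl.dim}B\le2$.

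\emph{The hereditary case.} For hereditary $A$, $\Ext^2_A$ vanishes identically, so the hypothesis holds for every interval.

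The only delicate point is the identification $\Ext^1_{\HH}=\Ext^1_A$ on the heart, which holds because the heart is extension-closed; the rest is routine.
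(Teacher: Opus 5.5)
Your proposal is correct and follows the paper's proof essentially step for step: the same exact segment $\Ext^1_A(P_Z,Y)\to\Ext^1_A(K,Y)\to\Ext^2_A(Z,Y)$ shows that first relative syzygies are Ext-projective, and the same submodule-closure argument ($\Omega_BM\cong F(X)$, whose syzygy is $F(\Omega_{T,U}X)\in\proj B$) gives $\operatorname{gl.dim}B\leq2$. Your explicit appeal to Lemma~\ref{lem:global-stabilization} via $\CC_1=\CC_2=\add P$ only spells out the delooping bound, which the paper states in one line.
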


\begin{proof}
For $X\in\HH_{T,U}$, take a conflation
$0\to K\to P_0\to X\to0$ with $P_0\in\add P$.
For every $Y\in\HH_{T,U}$ the exact sequence
\[
 \Ext_A^1(P_0,Y)\longrightarrow\Ext_A^1(K,Y)
                  \longrightarrow\Ext_A^2(X,Y)
\]
has zero outer terms. Hence $K$ is Ext-projective in the heart,
so $K\in\add P$. This proves the assertion about relative syzygies
and their delooping levels. Now every first syzygy over $B$ is
$F(X)$ for some $X$ in the heart, while $F(X)$ has projective dimension
at most one by Proposition~\ref{prop:Yoneda}. Thus every $B$-module
has projective dimension at most two.
\end{proof}

The hereditary endomorphism-algebra bound is also consistent with the
classical theory of endomorphism algebras of rigid modules. The proof
above explains it directly in terms of interval syzygies and also gives
the local $\Ext^2$ criterion.

\begin{example}[Sharpness with a nonzero lower endpoint]
\label{ex:hereditary-sharp}
Let $A=\kk(1\to2\to3\to4)$, and denote its projectives and simples
by $P_i$ and $S_i$.  Set
\[
 T=S_4,\qquad U=P_1\oplus S_1\oplus P_3\oplus P_4.
\]
The only nonprojective summand of $U$ is $S_1$. Its projective
resolution $0\to P_2\to P_1\to S_1\to0$ shows
$\Ext_A^1(S_1,U)=0$: the only nonzero target contribution from
$\Hom_A(P_2,U)$ is the map into $P_1$, which extends to $P_1$.
Thus $U$ is a tilting module with four indecomposable summands, and
$T\leq U$. Taking traces of $S_4$ gives
\[
 P_{T,U}=M_{123}\oplus S_1\oplus S_3,
\]
where $M_{123}$ is the interval representation on vertices $1,2,3$.
The only nonzero Hom spaces between distinct summands are
\[
 S_3\longrightarrow M_{123}\longrightarrow S_1,
\]
both one-dimensional, and their composite is zero. Therefore, after
relabeling vertices,
\[
 B\cong\kk(1\longrightarrow2\longrightarrow3) /
           (\text{the path of length two}),
 \qquad \operatorname{gl.dim}B=2.
\]
This proves that the constant two cannot be improved even for a
connected ambient algebra and a nonzero lower endpoint.
\end{example}

\subsection{Frobenius hearts and an infinite family}

An exact category is \emph{Frobenius} if it has enough projectives and
injectives and these two classes coincide.

A finitely generated $B$-module is \emph{Gorenstein-projective} if
it is a cycle in an exact complex of finitely generated projectives
which remains exact after applying $\Hom_B(-,B)$.
Write $\operatorname{Gproj}B$ for their full subcategory, with the
inherited exact structure.  Recall that an algebra is
\emph{Iwanaga--Gorenstein} when its left and right self-injective
dimensions are finite.

\begin{proposition}[Frobenius hearts and Gorenstein structure]
\label{prop:frobenius}
If the inherited exact category $\HH_{T,U}$ is Frobenius, then $F$
induces an exact equivalence
\[
 \HH_{T,U}\xrightarrow{\sim}\operatorname{Gproj}B=\Sub B,
\]
and $B$ is Iwanaga--Gorenstein with
\[
 \operatorname{id}(B_B)\leq1,\qquad
 \operatorname{id}({}_BB)\leq1.
\]
Moreover, for every $k\geq1$,
\[
 \operatorname{glo}\text{-}k\text{-}\dell[T,U]=0,
 \qquad \operatorname{glo}\text{-}\dell B\leq1,
 \qquad \max\{\findim B,\findim B^{\op}\}\leq1.
\]
If the heart is also wide, $B$ is self-injective and all its objectwise
and global $k$-delooping levels, as well as both finitistic dimensions,
are zero.  The same conclusions hold for the hearts of
Theorem~\ref{thm:arbitrary-lower}.
\end{proposition}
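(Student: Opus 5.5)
The first step is to show that the image of $F$ is exactly $\Sub B$. Every object of a Frobenius heart is an inflation into a projective-injective object. So for $X\in\HH_{T,U}$ there is a conflation $0\to X\to I_0\to X'\to0$ with $I_0\in\add P$. Applying $F$ embeds $F(X)$ in the projective $F(I_0)$, hence $F(\HH_{T,U})\subseteq\Sub B$. Conversely, $F(P)=B$ lies in the image, and Theorem~\ref{thm:submodule-image} (or Theorem~\ref{thm:arbitrary-lower}) says the image is closed under submodules. Hence $\Sub B\subseteq F(\HH_{T,U})$, and by Proposition~\ref{prop:Yoneda}(5) $F$ is an exact equivalence onto $\Sub B$.

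Next I identify $\Sub B$ with $\operatorname{Gproj}B$. Take $X$ in the heart and splice injective coresolutions inside $\HH_{T,U}$ (conflations $X^i\to I^i\to X^{i+1}$ with $I^i\in\add P$) with a projective resolution. This gives an acyclic complex in $\add P$ whose cycles lie in the heart. Apply $F$ and use the Frobenius property: $\add P$ is also Ext-injective. Then Proposition~\ref{prop:Yoneda}(5) gives $\Ext^1_B(F(Z),F(P'))\cong\Ext^1_{\HH}(Z,P')=0$, so $\Hom_B(-,B)$ keeps the complex exact. Hence $F(X)$ is Gorenstein projective, so $\Sub B\subseteq\operatorname{Gproj}B$. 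The reverse inclusion $\operatorname{Gproj}B\subseteq\Sub B$ holds in general, since a Gorenstein projective module is a cycle, hence a submodule of a projective.

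For the self-injective dimensions I use $\Omega_B M\in\Sub B=\operatorname{Gproj}B$ for every $M$. This gives $\operatorname{Ext}^{i}_B(M,B)\cong\Ext^{i-1}_B(\Omega_BM,B)=0$ for $i\geq2$, because Gorenstein projectives have vanishing $\Ext^{\geq1}$ into projectives. Hence $\operatorname{id}(B_B)\leq1$.

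The left-hand bound $\operatorname{id}({}_BB)\leq1$ is the step I expect to be the main obstacle. I would first try the general fact (Zaks) that for an Artin algebra with $\operatorname{id}(B_B)<\infty$ and $\operatorname{id}({}_BB)<\infty$ the two dimensions agree. If finiteness of the left side is not directly available, I would instead use duality. Every right $B$-module $M$ has $\Omega_BM$ Gorenstein projective, so $B$ is right $1$-Gorenstein in the sense that $\operatorname{Gproj}$ contains all first syzygies. The known characterization (Enochs--Jenda/Beligiannis: an algebra is $d$-Gorenstein iff all $d$th syzygies are Gorenstein projective) then gives both injective dimensions $\leq1$.

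The numerical consequences follow from these. Since every first relative syzygy of an object $X$ of the heart is a cosyzygy of another object, $X$ itself is a first syzygy up to projectives. Thus $\CC_0=\CC_1$, and Lemma~\ref{lem:global-stabilization} gives $\operatorname{glo}\text{-}k\text{-}\dell[T,U]=0$. Theorem~\ref{thm:global-sharp} then gives $\operatorname{glo}\text{-}\dell B\leq1$. For $1$-Gorenstein algebras both finitistic dimensions are at most $1$.

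If the heart is also wide, Theorem~\ref{thm:wide-equality} makes $F$ an equivalence with $\modB$. Then $\modB$ is Frobenius, so $B$ is self-injective, and all levels and finitistic dimensions vanish. Finally, only the submodule-closed realization was used, so Theorem~\ref{thm:arbitrary-lower} extends everything to $\HH_{\mathcal V,U}$.
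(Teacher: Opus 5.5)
Your proof is correct, and most of it coincides with the paper's argument. The shared parts are the identification $F(\HH_{T,U})=\Sub B=\operatorname{Gproj}B$ (via Frobenius inflations, submodule closure and a spliced complete resolution), the bound $\operatorname{id}(B_B)\leq1$ from first syzygies, the equality $\CC_0=\CC_1$, and the wide case. Your use of $\Ext^1_B(F(Z),B)\cong\Ext^1_{\HH}(Z,P)=0$ amounts to the paper's identification of the dual complex with $\Hom_A(-,P)$. Your sentence about first syzygies being cosyzygies is garbled; it should say that each $X$ has a conflation $X\to I\to X'$ with $I\in\add P$, so Schanuel places $X$ in $\CC_1$.

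The genuine difference is the left bound $\operatorname{id}({}_BB)\leq1$. You quote the characterization of $d$-Gorenstein Artin algebras by $\Omega^d(\modB)\subseteq\operatorname{Gproj}B$. This does hold for Artin algebras from a hypothesis on right modules only; the left bound comes from the right module $DB$ having Gorenstein-projective dimension at most one. The Zaks detour is unnecessary, since it presupposes the finiteness on the left that is at stake.

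The paper instead proves exactly the case $d=1$ by hand. For an injective $I$, push out $0\to\Omega_BI\to Q\to I\to0$ along an inflation of the Gorenstein-projective $\Omega_BI$ into a projective $Q'$. This gives a Gorenstein-projective $E$ with $0\to Q'\to E\to I\to0$. Embedding $E$ in a projective $Q''$ yields a split sequence $0\to I\to Q''/Q'\to G''\to0$, so $\operatorname{pd}_BI\leq1$; then take $I=DB$. Your route is shorter but relies on the literature, while the paper's is self-contained; it is essentially the $d=1$ proof of the result you cite.

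The finitistic bounds also differ slightly. You get both from $\findim B\leq\operatorname{id}(B_B)$ and $\findim B^{\op}\leq\operatorname{id}({}_BB)$. The paper gets $\findim B^{\op}\leq1$ from Corollary~\ref{cor:global-comparison} with $g=0$. It gets $\findim B\leq1$ because a Gorenstein-projective first syzygy of finite projective dimension is projective. Both routes are valid.
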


\begin{proof}
For each $X$ in the heart, splice projective resolutions and injective
coresolutions to obtain a bi-infinite acyclic complex with terms in
$\add P$ and with $X$ as a cycle.  All its cycles lie in the heart.
Applying $F$ gives an exact complex of projective $B$-modules.
Full faithfulness identifies its $B$-dual with the complex obtained
by applying $\Hom_A(-,P)$, which is exact because $P$ is injective in
the heart.  Hence $F(X)$ is Gorenstein-projective.  This is the usual
Frobenius embedding construction; see \cite{ChenFrobenius}.

Every Gorenstein-projective module embeds into a projective module,
so it belongs to $F(\HH_{T,U})$ by submodule closure.
Conversely, Frobenius inflations embed every object of the image into
a projective.  Since every submodule of a projective already belongs
to the image, we obtain
\[
 F(\HH_{T,U})=\operatorname{Gproj}B=\Sub B.
\]
Proposition~\ref{prop:Yoneda}(5) gives the asserted exact equivalence.

In particular, every first $B$-syzygy is Gorenstein-projective.
Thus $\Ext_B^2(M,B)=0$ for every $M\in\modB$, and
$\operatorname{id}(B_B)\leq1$.
To obtain the left bound, let $I$ be a finitely generated injective
right $B$-module.  Choose exact sequences
\[
 0\longrightarrow G\longrightarrow Q\longrightarrow I\longrightarrow0,
 \qquad
 0\longrightarrow G\longrightarrow Q'\longrightarrow G'\longrightarrow0
\]
with $Q,Q'$ projective and $G,G'$ Gorenstein-projective.
Their pushout gives exact sequences
\[
 0\longrightarrow Q\longrightarrow E\longrightarrow G'\longrightarrow0,
 \qquad
 0\longrightarrow Q'\longrightarrow E\longrightarrow I\longrightarrow0.
\]
The first splits because $\Ext_B^1(G',Q)=0$, so $E$ is
Gorenstein-projective.  Embed $E$ into a projective $Q''$ with
Gorenstein-projective cokernel $G''$.  Then
\[
 0\longrightarrow I\longrightarrow Q''/Q'
       \longrightarrow G''\longrightarrow0
\]
splits, since $I$ is injective.  The module $Q''/Q'$ has projective
dimension at most one, hence so does $I$.  Taking $I=DB$ yields
$\operatorname{id}({}_BB)=\operatorname{pd}_B DB\leq1$.

Iterated inflations into projective-injectives show that every object
of the heart is stably an arbitrary higher relative syzygy.  Hence all
its global relative levels are zero.  Theorem~\ref{thm:global-sharp}
and Corollary~\ref{cor:global-comparison} give the bounds for the
global level of $B$ and for $\findim B^{\op}$.
If a right $B$-module has finite projective dimension, its first syzygy
is Gorenstein-projective of finite projective dimension, hence
projective.  This proves $\findim B\leq1$.
Finally, if the heart is wide, Theorem~\ref{thm:wide-equality}
identifies the Frobenius heart with $\modB$.  Thus $B$ is
self-injective, proving all the remaining vanishings.
\end{proof}

\begin{example}[A monomorphism heart]\label{ex:monomorphism}
Let $D$ be a basic self-injective algebra of finite dimension, with
$D\neq0$, and put
\[
 A=\kk(1\to2)\otimes_{\kk}D.
\]
Right $A$-modules are morphisms $f:X\to Y$ of right $D$-modules.
The regular module is
$A=(D\xrightarrow{1}D)\oplus(0\to D)$.
Take $U=A$ and $T=(D\to0)$.  The latter is the regular module over
the idempotent quotient supported at vertex $1$, so is support
$\tau$-tilting.  Since
\[
 \Hom_A(T,(X\xrightarrow{f}Y))\cong\Hom_D(D,\ker f),
\]
the heart is exactly the monomorphism category
\[
 \HH_{T,A}=\operatorname{Mon}(\operatorname{mod}D)
       =\{X\xrightarrow{f}Y\mid f\text{ is injective}\}.
\]
Also $\Hom_A(T,A)=0$, so $P_{T,A}=A$ and $B_{T,A}\cong A$.
This heart is not ICE-closed: the inclusion
$(0\to D)\hookrightarrow(D\xrightarrow{1}D)$ has cokernel $(D\to0)$.

The monomorphism category is Frobenius with projective-injectives
$\add A$; this is a special case of \cite[Lemma~2.1]{ChenMono}.
For completeness, its injective structure can be seen explicitly.
For a monomorphism $f:X\to Y$, embed $X$ in a projective-injective
$D$-module $I$ and $Y/fX$ in a projective-injective $D$-module $J$.
Extend the first embedding to $g:Y\to I$.  If $\pi:Y\to Y/fX$ is
the quotient and $j:Y/fX\hookrightarrow J$, the maps
\[
 X\hookrightarrow I,
 \qquad Y\longrightarrow I\oplus J,\quad y\longmapsto(g(y),j\pi(y))
\]
give an inflation of $f$ into $(I\hookrightarrow I\oplus J)$ with
monomorphic cokernel.  The objects $(I\xrightarrow{1}I)$ and
$(0\to J)$ are injective in the inherited exact structure:
Hom into them is respectively $\Hom_D(Y,I)$ and
$\Hom_D(\operatorname{coker}f,J)$, both exact on conflations.
They are also projective; together with the Ext-progenerator $A$ this
proves the Frobenius assertion.

Proposition~\ref{prop:frobenius} identifies this monomorphism heart
with $\operatorname{Gproj}A=\Sub A$ and shows that both self-injective
dimensions of $A$ are at most one. It also gives
\[
 \operatorname{glo}\text{-}k\text{-}\dell[T,A]=0,
 \qquad k\text{-}\dell A\leq1\quad(k\geq1).
\]
Moreover, $(D\to0)$ has projective dimension exactly one, as shown by
\[
 0\longrightarrow(0\to D)\longrightarrow(D\xrightarrow{1}D)
   \longrightarrow(D\to0)\longrightarrow0.
\]
Applying the same argument to $D^{\op}$, and reversing the vertices
to identify the opposite triangular algebra, proves
\[
 \findim A=\findim A^{\op}=1.
\]
Since $\findim A^{\op}\leq\dell A\leq k\text{-}\dell A\leq1$,
one also has $k\text{-}\dell A=1$ for every $k\geq1$.
Hence the interval bound is sharp for every such $D$.
These triangular-algebra homological facts are classical; the point here
is their realization by a nonwide interval whose intrinsic global
levels vanish.

Take, more concretely, an algebraically closed field of characteristic
not two and
\[
 D=\kk\langle x,y\rangle/(x^2,y^2,xy+yx).
\]
This exterior algebra is self-injective: the coefficient of $xy$
defines a nondegenerate associative bilinear form.  It is
representation-infinite, since the two-dimensional modules on which
$x$ acts as a fixed nonzero nilpotent matrix $N$ with $N^2=0$ and
$y$ as $\lambda N$ are pairwise nonisomorphic indecomposables as
$\lambda$ ranges over $\kk$.  Thus $A$ is connected and
representation-infinite.  It has infinite global dimension: a
nonprojective simple $D$-module has infinite projective dimension, and
evaluation at the second vertex takes projective $A$-modules to
projective $D$-modules.

The heart is representation-infinite because $X\mapsto(0\to X)$
preserves indecomposables and their isomorphism classes. Since it is
Frobenius, every object is, up to projective summands, an $r$th relative
syzygy for every $r$. Thus $\CC_r=\HH_{T,A}$ for all $r\geq0$,
and none of these strata is representation-finite. This verifies that
the Frobenius application goes beyond the finite-stratum criterion.
\end{example}

\section{Examples and further questions}
\label{sec:examples}

The examples in this section separate the phenomena occurring in the
comparison theorems. We begin with wide intervals of levels zero and
one, then exhibit the radical correction term and a connected non-ICE
example attaining the global bound. A product construction realizes
arbitrary classical levels. Finally, a Kronecker interval exhibits
a lower endpoint that is not functorially finite.

\subsection{A semisimple wide interval}

\begin{example}\label{ex:wide-A2}
Let $A=\kk(1\to2)$, with simples $S_i$ and projectives $P_i$.
Thus $P_2=S_2$ and $\rad P_1=S_2$.  Take $T=S_2$ and $U=A$.
Since $\Hom_A(S_2,X)\cong X_2$, the heart is
\[
 \HH_{T,A}=S_2^\perp=\add S_1.
\]
The traces in $P_1$ and $P_2$ are respectively $\rad P_1$ and $P_2$,
so
\[
 P_{T,A}=A/\tr_{S_2}(A)\cong S_1,
 \qquad B_{T,A}\cong\kk.
\]
This is a semisimple wide interval with nonzero lower endpoint.
All objectwise, finite-test, and global relative $k$-delooping levels
vanish, illustrating Theorem~\ref{thm:wide-equality}.
\end{example}

\subsection{A wide interval of positive level}

The preceding heart is semisimple. The next example shows that a
nonzero lower endpoint can also produce a nonsemisimple heart and a
positive interval level without any radical correction term.

\begin{example}\label{ex:wide-positive}
Let $A=\kk Q$ for
\[
 Q:1\longrightarrow2\longrightarrow3,
\]
and write $S_i$ and $P_i$ for the simple and indecomposable projective
right $A$-modules at vertex $i$.  Take
\[
 T=S_3,\qquad U=A=P_1\oplus P_2\oplus P_3.
\]
The module $T$ is support $\tau$-tilting and $\Fac T=\add S_3$.  Since
$\Hom_A(S_3,X)\cong X_3$ for a representation $X$ of $Q$, one has
\[
 \HH_{T,U}=S_3^{\perp}
 =\{X\in\modA\mid X_3=0\}
 \simeq\operatorname{mod}\kk(1\longrightarrow2).
\]
In particular, the interval is wide. The traces of $S_3$ in the three
indecomposable projectives give
\[
 P_{T,U}=A/\tr_{S_3}(A)
 \cong (P_1/S_3)\oplus(P_2/S_3),
\]
which is the regular projective generator of the displayed $A_2$ heart.
Consequently,
\[
 B_{T,U}=\End_A(P_{T,U})\cong\kk(1\longrightarrow2).
\]

Inside the heart there is a nonsplit exact sequence
\[
 0\longrightarrow S_2\longrightarrow P_1/S_3
 \longrightarrow S_1\longrightarrow0.
\]
Thus $S_2$ and $P_1/S_3$ are projective in the heart and
$\Omega_{T,U}S_1\cong S_2$. Since the heart is hereditary, every first
syzygy is projective. It follows that, for every $k\geq1$,
\[
 k\text{-}\dell_{[T,U]}S_1=1,
 \qquad
 \operatorname{glo}\text{-}k\text{-}\dell[T,U]=1.
\]
Indeed, the value is at most one because the first syzygy of $S_1$ is
projective, and it is not zero because a nonprojective object cannot be
a stable retract of a projective object.  Theorem~\ref{thm:wide-equality}
therefore gives
\[
 k\text{-}\dell^{\mathrm{XZ}}[T,U]
 =k\text{-}\dell B_{T,U}=1,
 \qquad
 \findim B_{T,U}^{\op}=1.
\]
This is a non-product example in which all wide-interval equalities are
nontrivial.
\end{example}

\subsection{The radical correction term is necessary}

\begin{example}\label{ex:correction}
Keep $A=\kk(1\to2)$ and put
\[
  T=0,\qquad U=P_1\oplus S_1.
\]
The module $U$ is a basic tilting, hence $\tau$-tilting, module.  Its
torsion class is
\[
  \HH_{0,U}=\Fac U=\add(P_1\oplus S_1).
\]
It is ICE-closed but not wide: the kernel of the canonical epimorphism
$P_1\twoheadrightarrow S_1$ is $S_2$, which does not belong to the
heart. Since every object of the heart lies in $\add U$, its inherited
exact structure is split and every objectwise relative $k$-delooping
level is zero.

Put $P=U$, $B=\End_A(U)$ and $F=\Hom_A(U,-)$.  The algebra $B$ is a
hereditary algebra of Dynkin type $A_2$.  With
\[
 Q_1=F(P_1),\qquad Q_2=F(S_1),
\]
the module $Q_1$ is simple projective and
\[
  \rad Q_2\cong Q_1.
\]
Thus $L_1=Q_1$ belongs to $F(\HH_{0,U})$, while
$L_2=\topm Q_2$ does not. The correction object for $L_2$ is
$N_2=P_1$, because $F(P_1)=Q_1\cong\rad Q_2$.  Consequently,
\[
 \dell^{\mathrm{XZ}}[0,U]
 =\max\{\dell_{[0,U]}P_1,
         2\text{-}\dell_{[0,U]}P_1+1\}
 =1.
\]
On the other hand, $\dell B=1$. Hence the comparison theorem is sharp,
and omitting the radical correction term would give the false bound
$\dell B\leq0$.
\end{example}

\subsection{A connected non-ICE interval attaining the bound}

\begin{example}\label{ex:nonice}
Let $A=\kk(1\to2)$, but now take $T=S_1$ and $U=A$.
The module $S_1$ is support $\tau$-tilting, since it is the regular
module of the quotient supported at vertex $1$. Among the three
indecomposable $A$-modules, precisely $P_1$ and $S_2$ receive no
nonzero map from $S_1$.  Therefore
\[
 \HH_{T,A}=S_1^\perp=\add(P_1\oplus S_2)=\proj A.
\]
This heart is not ICE-closed: the inclusion $S_2\hookrightarrow P_1$
has cokernel $S_1\notin\HH_{T,A}$.  Also
$\Hom_A(S_1,A)=0$, so $\tr_{S_1}(A)=0$, $P_{T,A}=A$ and
$B_{T,A}=\End_A(A)\cong A$. he functor $F$ is the usual identity
identification on $\proj A$; its image is submodule-closed because $A$
is hereditary.

Every object of the heart is relatively projective, so
$\operatorname{glo}\text{-}k\text{-}\dell[T,A]=0$ for every $k\geq1$.
The simple $S_2$ lies in the image, whereas $S_1$ does not, and
$\rad P_1=S_2$ provides its correction object. Consequently
\[
 1=\findim B_{T,A}^{\op}
  =\dell B_{T,A}
  =\dell^{\mathrm{XZ}}[T,A]
  =\operatorname{glo}\text{-}\dell[T,A]+1.
\]
Also $\operatorname{glo}\text{-}\dell B_{T,A}=1$, since $B_{T,A}$
is hereditary and has a nonprojective simple. Hence the upper bound
in Theorem~\ref{thm:global-sharp} is attained. The unrestricted
comparison theorem applies strictly beyond ICE intervals, and its
additive constant is necessary even when $A$ is connected and the
lower endpoint is nonzero. Example~\ref{ex:wide-positive} attains
the other possibility, namely equality of the two global levels.
\end{example}

\subsection{Product lifting and realization}

Example~\ref{ex:correction} starts at the zero torsion class. The
following elementary product construction turns every Xu--Zhang example
into an interval with nonzero lower endpoint as soon as the second factor
is nonzero.

\begin{proposition}[Product lifting]\label{prop:product}
Let $A_0$ and $C$ be finite-dimensional basic algebras, let
$V\in\stau A_0$, and put $A=A_0\times C$. Consider the support
$\tau$-tilting $A$-modules
\[
  T=(0,C),\qquad U=(V,C).
\]
Then
\[
 \HH_{T,U}=\Fac_{A_0}V\times0,
 \qquad
 P_{T,U}=(V,0).
\]
Moreover,
\[
 \End_A(P_{T,U})\cong\End_{A_0}(V),
\]
and all objectwise, global, and Xu--Zhang interval delooping levels agree
with the corresponding relative levels for $\Fac_{A_0}V$ with respect
to $V$.
\end{proposition}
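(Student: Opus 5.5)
The plan is to use the block decomposition $\modA\simeq\operatorname{mod}A_0\times\operatorname{mod}C$ throughout. Write every $A$-module as a pair $(X,Z)$ with $X\in\operatorname{mod}A_0$ and $Z\in\operatorname{mod}C$. Then $\Hom_A((X,Z),(X',Z'))=\Hom_{A_0}(X,X')\times\Hom_C(Z,Z')$, and $\Fac$, $\tau$, traces and $\perp$ are computed componentwise.

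\textbf{Endpoints and heart.} First I will check that $T$ and $U$ are support $\tau$-tilting. Since $\tau_A(X,Z)=(\tau_{A_0}X,\tau_CZ)$ and the number of simples adds, $(V,C)$ is support $\tau$-tilting because $V$ is so over $A_0$ and $C$ is $\tau$-tilting over $C$. Likewise $(0,C)$ is support $\tau$-tilting, taking $(A_0,0)$ as the projective part of the support $\tau$-tilting pair. Next, $\Fac T=0\times\operatorname{mod}C\subseteq\Fac V\times\operatorname{mod}C=\Fac U$, so $T\leq U$. Since $\Hom_A((0,C),(X,Z))=\Hom_C(C,Z)\cong Z$, we get $T^\perp=\operatorname{mod}A_0\times0$. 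Intersecting with $\Fac U$ gives $\HH_{T,U}=\Fac_{A_0}V\times0$. Finally, $\tr_T(U)=(0,\tr_C C)=(0,C)$, so $R_{T,U}=U/\tr_T(U)=(V,0)$. Because $V$ is basic, so is $(V,0)$, and we may take $P_{T,U}=(V,0)$. Its endomorphism algebra is $\End_{A_0}(V)$ by the componentwise Hom formula.

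\textbf{Comparison of the invariants.} The embedding $\iota\colon\operatorname{mod}A_0\to\modA$, $X\mapsto(X,0)$, is fully faithful and exact, and it preserves and reflects short exact sequences. Hence it restricts to an isomorphism of exact categories $\Fac_{A_0}V\to\HH_{T,U}$ that sends $\add V$ onto $\add P_{T,U}$. An exact equivalence carrying the chosen Ext-progenerator to the chosen one identifies the following:
\begin{itemize}
\item minimal projective deflations;
\item the relative syzygy functors $\Omega$;
\item the stable categories, and hence stable retracts.
\end{itemize}
So the objectwise levels of Definition~\ref{def:object-dell} agree for $X$ and $\iota X$, and taking suprema gives agreement of the global levels.

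For the Xu--Zhang invariant I will also observe that $F=\Hom_A(P_{T,U},-)\circ\iota$ equals $\Hom_{A_0}(V,-)$ under $B\cong\End_{A_0}(V)$. Consequently the index set $I$, the simple-image objects $S_i$ and the radical-correction objects $N_j$ correspond under $\iota$. Definition~\ref{def:XZ-level} then yields equal values, compared with the $T=0$ case recorded in Remark~\ref{rem:boundary}. Remark~\ref{rem:intrinsic-heart} already says that these invariants depend only on the exact heart with its progenerator, so this step should follow essentially from that remark.

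\textbf{Main obstacle.} The only real care is in the support $\tau$-tilting verification for $(0,C)$ and $(V,C)$. For these I will cite the componentwise behaviour of $\tau$-rigidity and the counting criterion of \cite{AIR}. Everything after that is transport of structure.
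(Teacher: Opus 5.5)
Your proposal is correct and follows the paper's proof. Both compute $\Fac_A T$, $\Fac_A U$, $T^\perp$ and $\tr_T(U)=(0,C)$ componentwise in $\modA\simeq\operatorname{mod}A_0\times\operatorname{mod}C$, then transport the exact structure, syzygies, stable retracts and the Xu--Zhang data along $X\mapsto(X,0)$; your explicit check that $T$ and $U$ are support $\tau$-tilting with $T\leq U$, which the paper takes for granted in the statement, is a harmless addition.
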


\begin{proof}
We have
\[
 \Fac_A T=0\times\mod C,
 \qquad
 \Fac_A U=\Fac_{A_0}V\times\mod C,
\]
and
\[
 T^{\perp}=\mod A_0\times0.
\]
This proves the formula for the heart. The trace of $(0,C)$ in $(V,C)$
is $(0,C)$, so the formula for $P_{T,U}$ follows. The remaining
statements are immediate from the componentwise exact structure and
syzygy functor.
\end{proof}

The product construction also shows that interval levels with nonzero
lower endpoint contain the entire classical theory.

\begin{corollary}[Realization of classical levels]\label{cor:realization}
Let $\Lambda$ and $C$ be finite-dimensional basic algebras with $C\neq0$, and
put
\[
 A=\Lambda\times C,\qquad T=(0,C),\qquad U=(\Lambda,C)=A.
\]
Then $[\Fac T,\Fac U]$ is a wide interval with nonzero lower endpoint,
\[
 \HH_{T,U}=\operatorname{mod}\Lambda\times0,
 \qquad
 P_{T,U}=(\Lambda,0),
 \qquad
 B_{T,U}\cong\Lambda.
\]
For every $k\geq1$,
\[
 \operatorname{glo}\text{-}k\text{-}\dell[T,U]
 =\operatorname{glo}\text{-}k\text{-}\dell\Lambda,
 \qquad
 k\text{-}\dell^{\mathrm{XZ}}[T,U]
 =k\text{-}\dell\Lambda.
\]
\end{corollary}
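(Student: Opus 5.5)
The plan is to deduce this as a special case of Proposition~\ref{prop:product} with $A_0=\Lambda$ and $V=\Lambda$, and then to identify the relative theory of $\Fac_\Lambda\Lambda=\operatorname{mod}\Lambda$ with the classical one.

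First, $\Lambda$ is a (support) $\tau$-tilting $\Lambda$-module, namely the regular one, so Proposition~\ref{prop:product} applies. It gives
\[
\HH_{T,U}=\operatorname{mod}\Lambda\times0,\qquad P_{T,U}=(\Lambda,0),\qquad B_{T,U}\cong\End_\Lambda(\Lambda)\cong\Lambda .
\]
The lower endpoint $\Fac T=0\times\operatorname{mod}C$ is nonzero because $C\neq0$. The heart $\operatorname{mod}\Lambda\times0$ is closed under kernels, cokernels and extensions in $\operatorname{mod}A$, since these are computed componentwise. Hence the interval is wide.

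Next I would apply Theorem~\ref{thm:wide-equality}. It gives
\[
k\text{-}\dell^{\mathrm{XZ}}[T,U]=k\text{-}\dell B_{T,U},\qquad
\operatorname{glo}\text{-}k\text{-}\dell[T,U]=\operatorname{glo}\text{-}k\text{-}\dell B_{T,U}.
\]
It remains to replace $B_{T,U}$ by $\Lambda$. Both invariants are defined purely in terms of $\operatorname{mod}B$: its projectives, minimal syzygies, stable retracts and simple modules. All of these are preserved by the algebra isomorphism $B_{T,U}\cong\Lambda$, which induces an equivalence of module categories. One could equally argue directly that $F=\Hom_A((\Lambda,0),-)$ is, under this isomorphism, the forgetful identification $\operatorname{mod}\Lambda\times0\simeq\operatorname{mod}\Lambda$.

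I do not expect a real obstacle. The only point needing care is the identification $\End_A(P_{T,U})\cong\Lambda$ as algebras, including side conventions. Composition in $\End_\Lambda(\Lambda_\Lambda)$ is isomorphic to $\Lambda$ via left multiplication, so we get $\Lambda$ and not $\Lambda^{\op}$. Even if an opposite algebra appeared, the equalities above would still hold: they come from the exact equivalence $\HH_{T,U}\simeq\operatorname{mod}B$, and $\operatorname{mod}B$ is identified with $\operatorname{mod}\Lambda$ by Morita theory in Theorem~\ref{thm:wide-equality}.
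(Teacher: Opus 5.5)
Your proposal is correct and is essentially the paper's proof: specialize Proposition~\ref{prop:product} to $A_0=V=\Lambda$, observe that the heart $\operatorname{mod}\Lambda\times0$ is wide, and apply Theorem~\ref{thm:wide-equality}. Your explicit check that $\End_\Lambda(\Lambda_\Lambda)\cong\Lambda$ rather than $\Lambda^{\op}$ under the paper's composition convention is right, and the paper leaves that detail implicit.
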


\begin{proof}
Apply Proposition~\ref{prop:product} with $A_0=\Lambda$ and
$V=\Lambda$. The heart is the module category of $\Lambda$, hence is
wide, and Theorem~\ref{thm:wide-equality} gives the asserted equalities.
\end{proof}

For example, taking $\Lambda=\kk(1\to2)$ realizes the value one, while
taking $\Lambda$ to be the basic algebra constructed by Kershaw--Rickard
\cite{KershawRickard} gives a wide interval with nonzero lower endpoint
and infinite delooping level. Thus finiteness of interval levels is not
automatic even for wide intervals.

\subsection{A lower endpoint that is not functorially finite}

\begin{example}[A Kronecker interval]\label{ex:kronecker-lower}
Let $A$ be the Kronecker algebra over an algebraically closed field,
and let $\mathcal P$, $\mathcal R$, and $\mathcal I$ denote the
additive subcategories of preprojective, regular, and preinjective
modules. The classical separation of these components gives the
split torsion pair
\[
 (\mathcal V,\mathcal P),\qquad
 \mathcal V=\add(\mathcal R\cup\mathcal I);
\]
see, for example, \cite[Section~2]{AngeleriHuberyKrause}.
With upper endpoint $U=A$, the interval heart is
\[
 \HH_{\mathcal V,A}=\mathcal V^\perp=\mathcal P.
\]
All projective modules are preprojective, so $t_{\mathcal V}(A)=0$.
Consequently, $P=A$, $B=A$, and the realization of
Theorem~\ref{thm:arbitrary-lower} is the inclusion
$\mathcal P\hookrightarrow\modA$.

The lower torsion class $\mathcal V$ is not functorially finite.
Indeed, otherwise $\mathcal V=\Fac M$ for a finite-dimensional
module $M\in\mathcal V$, by \cite[Theorem~2.7]{AIR}.
Only finitely many regular tubes contain summands of $M$.
Choose a regular simple $R_\lambda$ in another homogeneous tube.
There are no maps from the preinjective summands of $M$ to
$R_\lambda$, and distinct regular tubes are Hom-orthogonal.
Thus $\Hom_A(M,R_\lambda)=0$, contradicting
$R_\lambda\in\Fac M$.

Since $A$ is hereditary, every first relative syzygy is projective.
The heart has nonprojective preprojective modules, so
\[
 \CC_1(\HH_{\mathcal V,A})=\proj A,
 \qquad
 \operatorname{glo}\text{-}\dell\HH_{\mathcal V,A}
 =\operatorname{glo}\text{-}\dell A=1.
\]
The heart itself has infinitely many indecomposables and is not
functorially finite: otherwise \cite[Lemma~4.18]{EnomotoSakai}
would make $\mathcal V$ functorially finite as well. The same lemma
shows that a heart of an interval with two functorially finite
endpoints is functorially finite. Thus this particular heart cannot
be obtained from two support $\tau$-tilting endpoints. The extension
to arbitrary lower torsion classes therefore enlarges the class of
hearts to which the comparison applies.
\end{example}

\subsection{Further questions}

We conclude with two questions and their relation to recent work.

\begin{question}\label{q:equality}
Characterize the intervals for which
\[
 k\text{-}\dell_BF(X)=k\text{-}\dell_{[T,U]}X
\]
for every object
$X$ of the heart and every $k\geq1$.
\end{question}

Wide intervals have this property. Proposition~\ref{prop:sandwich}
proves equality whenever the classical $k$- and $(k+1)$-delooping
levels of $F(X)$ agree.  Equality does not hold for arbitrary
intervals: \cite[Corollary~4.10]{XuZhangHigher} gives arbitrarily
large gaps already for a zero lower endpoint, and
Proposition~\ref{prop:product} carries them to nonzero lower endpoints.
The question is therefore to find structural equality criteria for
nonwide hearts, extending the sufficient conditions for $\Fac U$ in
\cite[Proposition~3.4]{XuZhangHigher}.

\begin{question}\label{q:monotonicity}
Under what hypotheses does inclusion of intervals control their global
relative delooping levels?  More precisely, when does
\[
 [\Fac T_1,\Fac U_1]\subseteq[\Fac T_2,\Fac U_2]
\]
imply an inequality between
$\operatorname{glo}\text{-}\dell[T_1,U_1]$ and
$\operatorname{glo}\text{-}\dell[T_2,U_2]$?
\end{question}

There is no formal monotonicity because both the heart and its
Ext-progenerator change. For a $\tau$-tilting finite algebra one can
define a monotone envelope by setting
\[
 \mathfrak D[T,U]
 =\sup_{T\leq V\leq W\leq U}
   \operatorname{glo}\text{-}\dell[V,W].
\]
This envelope is monotone under interval inclusion by construction.
Whether it distinguishes intervals with the same exact heart requires
further examples; no such assertion is needed here.

It would also be useful to compare interval invariants under
$\tau$-tilting reduction in the sense of Jasso \cite{Jasso}.
For the wide intervals arising in that reduction,
Theorem~\ref{thm:wide-equality} identifies the relative invariants with
the classical invariants of the corresponding module category.
The challenge is to obtain comparable statements for nonwide hearts.

\medskip
\noindent\textbf{Disclosure of computational assistance.}\quad
ChatGPT was used during revision to assist with 
literature searches, language editing, and manuscript preparation.

\medskip
\noindent\textbf{Acknowledgements.}\quad
Weiqing Cao is supported by the
National Natural Science Foundation of China (Grant No.~12601071). Mingfei Xu and Xiaojin Zhang are supported by the National Natural Science Foundation of
China (Grant Nos.~12171207 and 12371038).

\medskip
\noindent\textbf{Data availability.}\quad
Data sharing is not applicable to this article, as no datasets were
generated or analysed during the current study.

\medskip
\noindent\textbf{Conflict of interest.}\quad
The authors declare that they have no conflicts of interest.

\end{document}